\newtheorem{thm}{Theorem}[section]
\newtheorem{prop}[thm]{Proposition}
\newtheorem{lem}[thm]{Lemma}
\newtheorem{cor}[thm]{Corollary}
\theoremstyle{remark}
\newtheorem{rem}[thm]{Remark}
\newcommand{\ZZ}{\mathbb{Z}}
\newcommand{\RR}{\mathbb{R}}
\DeclareMathOperator{\Aut}{Aut}
\begin{document}

\title{On the Existence of Frames of Some Extremal 
Odd Unimodular
Lattices and Self-Dual $\ZZ_{k}$-Codes}

\author{
Masaaki Harada\thanks{
Department of Mathematical Sciences,
Yamagata University,
Yamagata 990--8560, Japan. 
email: mharada@sci.kj.yamagata-u.ac.jp}
and 
Tsuyoshi Miezaki\thanks{
Faculty of Education, Art and Science, 
Yamagata University, 
Yamagata 990--8560, Japan. 
email: miezaki@e.yamagata-u.ac.jp
}
}

\maketitle

\begin{abstract}
For some extremal (optimal) odd unimodular lattices $L$
in dimensions $n=12,16,20,32,36,40$ and $44$,
we determine all positive integers $k$ such that $L$ contains a
$k$-frame.
This result yields the existence of 
an extremal Type~I $\ZZ_{k}$-code of lengths 
$12,16,20,32,36,40$ and $44$ and a near-extremal Type~I $\ZZ_k$-code of
length $28$ for positive integers $k$ with only a few exceptions.
\end{abstract}

\section{Introduction}\label{Sec:1}

Self-dual codes and unimodular lattices are studied  from several
viewpoints (see~\cite{SPLAG} for an extensive bibliography). Many
relationships between self-dual codes and unimodular lattices are
known and there are similar situations between two subjects.
As a typical example,
it is known that a unimodular lattice $L$ contains a $k$-frame 
if and only if there exists a self-dual $\ZZ_{k}$-code $C$ such that
$L$ is isomorphic to the lattice obtained from $C$ by
Construction A,
where $\ZZ_{k}$ is the ring of integers modulo $k$.

As described in~\cite{RS-Handbook},
self-dual codes are an important class of linear codes for both
theoretical and practical reasons.
It is a fundamental problem to classify self-dual codes
of modest lengths 
and determine the largest minimum weight among self-dual codes
of that length.
Type~II $\ZZ_{2k}$-codes were defined in~\cite{BDHO} 
as a class of self-dual codes,
which are related to even unimodular lattices.
For binary Type~II codes,
much work has been done concerning the above fundamental problem
(see e.g.~\cite{BHM, CPS, SPLAG, RS-Handbook}).
For general $k$,
if $C$ is a Type~II $\ZZ_{2k}$-code of length $n \le 136$ 
then we have the bound on the minimum Euclidean weight
$d_E(C)$ of $C$ as follows:
$d_E(C) \le 4k \left\lfloor \frac{n}{24} \right\rfloor +4k$
for every positive integer $k$ (see \cite{HM12}).
We say that a Type~II $\ZZ_{2k}$-code meeting the bound 
with equality is {\em extremal} for length $n \le 136$.
It was shown in~\cite{Chapman, GH01} that
the Leech lattice, which is one of the most remarkable lattices,
contains a $2k$-frame for every positive integer $k \ge 2$.
This result yields the existence of an extremal Type~II $\ZZ_{2k}$-code of
length $24$  for every positive integer $k$.
Recently, the existence of
an extremal Type~II $\ZZ_{2k}$-code of length $n=32,40,48,56,64$ 
has been established by the authors \cite{HM12}
for every positive integer $k$.
This was done by finding a $2k$-frame in some
extremal even unimodular lattices in these dimensions $n$.

Recently, it was shown in~\cite{Miezaki} that the odd Leech 
lattice contains a $k$-frame for every positive integer $k$ with 
$k \ge 3$.
This motivates our investigation of the existence of
a $k$-frame in extremal odd unimodular lattices.
In this paper, 
for some extremal (optimal) odd unimodular lattices $L$
in dimensions $n=12,16,20,32,36,40$ and $44$,
we determine all integers $k$ such that $L$ contains a
$k$-frame.
This result yields the existence of 
an extremal Type~I $\ZZ_{k}$-code of lengths 
$12,16,20,32,36,40$ and $44$ and a near-extremal Type~I $\ZZ_k$-code of
length $28$ for positive integers $k$ with only a few small exceptions.

This paper is organized as follows. In Section~\ref{sec:Pre}, we
give definitions and some basic properties of self-dual codes
and unimodular lattices used in this paper.
The notion of extremal Type~I $\ZZ_k$-codes of length $n$
is given for $n \le 48$ and $k \ge 2$.
Lemma \ref{lem:frame} gives a reason why we consider 
unimodular lattices in only dimension $n\equiv 0 \pmod 4$.
In Section \ref{sec:NT},
using the theory of modular forms (see~\cite{Miyake} for details),
we derive some number theoretical results (Theorem~\ref{thm:prime}),
which are used in Section \ref{sec:frame}.
In Section~\ref{sec:frame}, we provide a method for
constructing $m$-frames in unimodular lattices, which
are constructed from some self-dual $\ZZ_k$-codes by Construction A
(Proposition \ref{prop:const}).
This method is a generalization of Propositions 3.3 and 3.6 in \cite{HM12}.
Using Theorem~\ref{thm:prime} and Proposition \ref{prop:const},
we give $k$-frames in 
the unique extremal odd unimodular lattice
in dimensions $n=12,16$ and 
some extremal (optimal) odd unimodular lattices $L$
in dimensions $n=20,32,36,40$ and $44$,
which are listed in Table \ref{Tab:L}, 
for all positive integers $k$ satisfying the condition
$(\star)$ in Table \ref{Tab:L}
(Lemma \ref{lem:key}).
In Section \ref{sec:main}, 
some extremal (near-extremal) Type~I $\ZZ_{k}$-codes are constructed
for some integers $k$.  
Then we establish the existence of
a $k$-frame in the extremal (optimal) unimodular lattices $L$
in dimension $n=12,16,20,28, 32,36$, which are listed in Table \ref{Tab:L} 
(except only lattices 
$A_3(C_{20,3}(D'_{10}))$ and $A_5(C_{20,5}(D''_{10}))$),
for every positive integer $k$ with $k \ge \min(L)$, where $\min(L)$
denotes the minimum norm of $L$.
As a consequence, we show that
the $32$-dimensional Barnes--Wall lattice $BW_{32}$
contains a $2k$-frame
for every positive integer $k$ with $k \ge 2$. 
We also discuss the positivity of coefficients of 
the theta series of some extremal (optimal) unimodular lattices
in dimension $n \le 36$.
When $n=40,44$, it is shown that there is an extremal odd unimodular 
lattice in dimension $n$ containing a $k$-frame
for every positive integer $k$ with $k \ge 4$. 
As a consequence,  the existence of 
an extremal Type~I $\ZZ_{k}$-code of lengths 
$n=12,16,20,32,36,40, 44$ 
and a near-extremal Type~I $\ZZ_k$-code of
length $n=28$ is established for a positive integer $k$,
where 
$k \ne 1,3$ if $n =32$ and $k \ne 1$ otherwise.
Finally, in Section 6, we investigate the existence of
a $k$-frame in optimal odd unimodular lattices in dimension $48$.

All computer calculations in this paper were
done by {\sc Magma}~\cite{Magma}.

%
%

\section{Preliminaries}\label{sec:Pre}

In this section, we give definitions and some basic properties 
of self-dual codes and unimodular lattices 
used in this paper.

\subsection{Self-dual codes}
Let $\ZZ_{k}$ be the ring 
of integers modulo $k$, where $k$ 
is a positive integer. 
In this paper, we always assume that $k\geq 2$ and 
we take the set $\ZZ_{k}$ to be 
$\{0,1,\ldots,k-1\}$.
A $\ZZ_{k}$-code $C$ of length $n$
(or a code $C$ of length $n$ over $\ZZ_{k}$)
is a $\ZZ_{k}$-submodule of $\ZZ_{k}^n$.
A $\ZZ_2$-code and a $\ZZ_3$-code
are called binary and ternary, respectively.
The Euclidean weight of a codeword $x=(x_1,\ldots,x_n)$ of $C$ is
$\sum_{\alpha=1}^{\lfloor k/2 \rfloor}n_\alpha(x) \alpha^2$, 
where $n_{\alpha}(x)$ denotes
the number of components $i$ with $x_i \equiv \pm \alpha \pmod k$ 
$(\alpha=1,2,\ldots,\lfloor k/2 \rfloor)$.
It is trivial that the Euclidean weight is the same
as the (usual) Hamming weight for the case $k=2,3$.
The minimum Euclidean weight $d_E(C)$ of $C$ is the smallest Euclidean
weight among all nonzero codewords of $C$.

A $\ZZ_{k}$-code $C$ is {\em self-dual} if $C=C^\perp$, where
the dual code $C^\perp$ of $C$ is defined as 
$C^\perp = \{ x \in \ZZ_{k}^n \mid x \cdot y = 0$ for all $y \in C\}$
under the standard inner product $x \cdot y$. 
A {\em Type~II} $\ZZ_{2k}$-code was defined in
\cite{BDHO} as a self-dual code with the property that all
Euclidean weights are divisible by $4k$.
It is known that a Type~II $\ZZ_{k}$-code of length $n$ exists
if and only if $n$ is divisible by eight and $k$ is even \cite{BDHO}.
A self-dual code which is not Type~II is called {\em Type~I}.

Two self-dual $\ZZ_{k}$-codes $C$ and $C'$ are {\em equivalent} 
if there exists a monomial $(\pm 1, 0)$-matrix $P$ with 
$C' = C \cdot P = \{ x P\:|\: x \in C\}$.  
The automorphism group $\Aut(C)$ of $C$ is the group of all
monomial $(\pm 1, 0)$-matrices $P$ with
$C = C \cdot P$.

\subsection{Unimodular lattices}\label{sec:2U}
A (Euclidean) lattice $L \subset \RR^n$
in dimension $n$
is {\em unimodular} if
$L = L^{*}$, where
the dual lattice $L^{*}$ of $L$ is defined as
$\{ x \in {\RR}^n \mid (x,y) \in \ZZ \text{ for all }
y \in L\}$ under the standard inner product $(x,y)$.
Two lattices $L$ and $L'$ are {\em isomorphic}, denoted $L \cong L'$,
if there exists an orthogonal matrix $A$ with
$L' = L \cdot A$.
The norm of a vector $x$ is defined as $(x, x)$.
The minimum norm $\min(L)$ of a unimodular
lattice $L$ is the smallest norm among all nonzero vectors of $L$.
The theta series $\theta_{L}(q)$ of $L$ is the formal power
series $\theta_{L}(q) = \sum_{x \in L} q^{(x,x)}$.
The kissing number of $L$ is the second nonzero coefficient of the
theta series.
A unimodular lattice with even norms is said to be {\em even}, 
and that containing a vector of odd norm is said to be {\em odd}.
An even unimodular lattice in dimension $n$
exists if and only
if $n \equiv 0 \pmod 8$, while an odd unimodular lattice
exists for every dimension. 
Two lattices $L$ and $L'$ are {\em neighbors} if
both lattices contain a sublattice of index $2$
in common.

Let $L$ be a unimodular lattice.
Define $L_0=\{x \in L \mid (x,x) \equiv 0 \pmod 2\}$.
Then $L_0$ is a sublattice of $L$ of index $2$ if $L$ is
odd and $L_0=L$ if $L$ is even.
The shadow $S$ of $L$ is defined as
$S=L_0^* \setminus L$ if  $L$ is odd 
and as $S=L$ if $L$ is even \cite{CS-odd}.
Now suppose that $L$ is an odd unimodular lattice.
Then there are cosets $L_1,L_2,L_3$ of $L_0$ such that
$L_0^* = L_0 \cup L_1 \cup L_2 \cup L_3$, where
$L = L_0  \cup L_2$ and $S = L_1 \cup L_3$.
If $L$ is an odd unimodular lattice in dimension divisible by eight,
then $L$ has two even unimodular neighbors
of $L$, namely, $L_0 \cup L_1$ and $L_0 \cup L_3$.

Rains and Sloane \cite{RS-bound} showed
that a unimodular lattice $L$ in dimension $n$
has minimum norm $\min(L) \le 2 \lfloor \frac{n}{24} \rfloor+2$
unless $n=23$ when $\min(L) \le 3$ (see~\cite{Siegel}
for the case that $L$ is even).
A unimodular lattice meeting the bound
with equality is called {\em extremal}.
Gaulter~\cite{Gaulter} showed that
any unimodular lattice in dimension $24k$
meeting the upper
bound has to be even, which was conjectured in \cite{RS-bound}.
Hence,  an odd unimodular lattice $L$
in dimension $24k$
satisfies $\min(L) \le 2k+1$.
We say that an odd unimodular lattice with the largest minimum norm
among all odd unimodular lattices in that dimension is {\em optimal}.


\subsection{Construction A and $k$-frames}

We give a method to construct 
unimodular lattices from self-dual $\ZZ_{k}$-codes, which 
is referred to as {\em Construction A} (see~\cite{{BDHO},{HMV}}). 
Let $\rho$ be a map from $\ZZ_{k}$ to $\ZZ$ sending $0, 1, \ldots , k-1$ 
to $0, 1, \ldots , k-1$, respectively.
If $C$ is a  self-dual $\ZZ_{k}$-code of length $n$, then 
the lattice 
\[
A_{k}(C)=\frac{1}{\sqrt{k}}\{\rho (C) +k \ZZ^{n}\} 
\]
is a unimodular lattice in dimension $n$, where 
$
\rho (C)=\{(\rho (c_{1}), \ldots , \rho (c_{n})) 
\mid (c_{1}, \ldots , c_{n}) \in C\}. 
$
The minimum norm of $A_{k}(C)$ is $\min\{k, d_{E}(C)/k\}$.
Moreover, $C$ is a Type~II $\ZZ_{2k}$-code if and only if
$A_{2k}(C)$ is an even unimodular lattice~\cite{BDHO}.

A set $\{f_1, \ldots, f_{n}\}$ of $n$ vectors $f_1, \ldots, f_{n}$ of 
a unimodular lattice $L$ in dimension $n$ with
$ ( f_i, f_j ) = k \delta_{i,j}$
is called a {\em $k$-frame} of $L$,
where $\delta_{i,j}$ is the Kronecker delta.
It is known that a unimodular lattice $L$ contains a $k$-frame 
if and only if there exists a self-dual $\ZZ_{k}$-code $C$ with 
$A_{k}(C) \cong L$ (see \cite{HMV}).

By the following lemma, it is enough to consider 
a $p$-frame in an odd  unimodular lattice
for each prime $p$.
The lemma also gives a reason why we consider 
unimodular lattices in only dimension $n\equiv 0 \pmod 4$.

\begin{lem}[{Chapman~\cite[Lemma 5.1]{Chapman}}]
\label{lem:frame}
If a lattice $L$ in dimension $n\equiv 0 \pmod 4$ contains a $k$-frame, then
$L$ contains a $km$-frame for every positive integer $m$.
\end{lem}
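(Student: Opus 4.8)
The plan is to reduce the construction of a $km$-frame to a purely arithmetic statement about integer matrices, exploiting the hypothesis $4 \mid n$ to split the given $k$-frame into blocks of four. Write $\{f_1,\dots,f_n\}$ for the given $k$-frame, so that $(f_i,f_j)=k\delta_{i,j}$. Any integer linear combination $\sum_j M_{ij}f_j$ again lies in $L$, and for two such combinations $g_i=\sum_j M_{ij}f_j$ and $g_{i'}=\sum_j M_{i'j}f_j$ one computes
\[
(g_i,g_{i'})=\sum_{j,j'}M_{ij}M_{i'j'}(f_j,f_{j'})=k\sum_j M_{ij}M_{i'j}=k\,(MM^\top)_{i,i'}.
\]
Thus if I can find an integer matrix $M$ with $MM^\top=m I$, then the rows of $M$ transform the frame vectors into a set of vectors each of norm $km$ that are mutually orthogonal. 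The whole problem therefore comes down to producing such an $M$ of the appropriate size.

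For a $4\times 4$ matrix this is exactly Lagrange's four-square theorem. First I would write $m=a^2+b^2+c^2+d^2$ with $a,b,c,d\in\ZZ$, and then take the quaternionic matrix
\[
M=\begin{pmatrix} a & b & c & d \\ -b & a & -d & c \\ -c & d & a & -b \\ -d & -c & b & a \end{pmatrix},
\]
which satisfies $MM^\top=(a^2+b^2+c^2+d^2)I_4=mI_4$ by the Euler four-square identity. This is precisely where the divisibility $n\equiv 0\pmod 4$ enters: in dimension less than four there is in general no integer matrix scaling the norm by an arbitrary $m$ (already $m$ itself need not be a sum of three squares), whereas four squares always suffice, so grouping the frame into quadruples is exactly what is needed.

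To finish, I would partition $\{1,\dots,n\}$ into $n/4$ blocks of size four and, on each block, apply the map above to the corresponding four frame vectors, producing four new vectors of norm $km$. Within a block the four new vectors are mutually orthogonal by the computation above with $MM^\top=mI_4$; across distinct blocks the new vectors are integer combinations of disjoint subsets of the mutually orthogonal $f_i$, hence automatically orthogonal. Since $m\ge 1$, each new vector has positive norm $km$ and is therefore nonzero, so the $4\cdot(n/4)=n$ vectors collected in this way are pairwise orthogonal vectors of norm $km$ in $L$, that is, a $km$-frame.

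I expect no serious obstacle here: the only substantive ingredient is the existence of the integer matrix $M$ with $MM^\top=mI_4$, which is the standard quaternionic consequence of Lagrange's theorem recalled above. The remaining points — that the constructed vectors lie in $L$, and that cross-block orthogonality comes for free — follow immediately from $M$ being integral and from the block decomposition respecting the orthogonality of the original frame.
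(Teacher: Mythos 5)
Your proof is correct, and it is essentially the argument of Chapman's Lemma 5.1, which the paper simply cites without reproducing: write $m$ as a sum of four squares, form the $4\times 4$ quaternionic matrix $M$ with $MM^{\top}=mI_4$, and apply it blockwise to the frame, which is exactly where $4\mid n$ is used. No gaps; the verification that the new vectors lie in $L$ and are pairwise orthogonal of norm $km$ is as you state.
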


\subsection{Upper bounds on the minimum Euclidean weights}

A self-dual $\ZZ_k$-code $C$ of length $n$ satisfies the bound:
\begin{equation}\label{eq:kbound}
d_E(C) \le
\begin{cases}
4 \lfloor \frac{n}{24} \rfloor+4 & \text{ if } 
                      k=2, n \not\equiv 22 \pmod{24} \\
4 \lfloor \frac{n}{24} \rfloor+6 & \text{ if } 
                      k=2, n \equiv 22 \pmod{24} \\
3 \lfloor \frac{n}{12} \rfloor +3 & \text{ if } k=3 \\
8 \lfloor \frac{n}{24} \rfloor+8 & \text{ if } k=4, 
                                  n \not\equiv 23 \pmod{24} \\
8 \lfloor \frac{n}{24} \rfloor+12 & \text{ if } k=4, 
                                  n \equiv 23 \pmod{24},
\end{cases}
\end{equation}
\cite{MPS,Rains,RS-bound}.
Note that 
a binary self-dual code of length divisible by $24$ meeting the bound
must be Type~II \cite{Rains}.


Although the following lemmas are somewhat trivial, 
we give proofs for the sake of completeness.

\begin{lem}\label{lem:bound}
Let $C$ be a self-dual $\ZZ_k$-code of length $n$.
If $n \ne 23$ and 
$k \ge 2 \lfloor \frac{n}{24} \rfloor+3$,
then $d_E(C) \le 2k \lfloor \frac{n}{24} \rfloor+2k$.
If $n = 23$ and $k \ge 4$ then $d_E(C) \le 3k$.
\end{lem}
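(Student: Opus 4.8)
The plan is to connect the minimum norm of the Construction A lattice $A_k(C)$ to the minimum Euclidean weight $d_E(C)$, and then invoke the Rains–Sloane bound on the minimum norm of a unimodular lattice (stated in the excerpt). Recall from the Construction A subsection that if $C$ is a self-dual $\ZZ_k$-code of length $n$, then $A_k(C)$ is a unimodular lattice in dimension $n$ whose minimum norm equals $\min\{k, d_E(C)/k\}$. The Rains–Sloane bound says that, for $n \ne 23$, this minimum norm is at most $2\lfloor \frac{n}{24}\rfloor + 2$. Combining these gives $\min\{k, d_E(C)/k\} \le 2\lfloor \frac{n}{24}\rfloor + 2$, from which I want to extract the stated bound on $d_E(C)$.

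First I would handle the main case $n \ne 23$. The hypothesis $k \ge 2\lfloor \frac{n}{24}\rfloor + 3$ is precisely what guarantees that $k$ is \emph{not} the quantity achieving the minimum in $\min\{k, d_E(C)/k\}$, because $k > 2\lfloor \frac{n}{24}\rfloor + 2 \ge \min(A_k(C))$. Hence the minimum norm must be $d_E(C)/k$, and the Rains–Sloane bound yields $d_E(C)/k \le 2\lfloor \frac{n}{24}\rfloor + 2$, i.e.\ $d_E(C) \le 2k\lfloor \frac{n}{24}\rfloor + 2k$, as claimed. The key point to verify carefully is the strict inequality $k > \min(A_k(C))$: the hypothesis gives $k \ge 2\lfloor \frac{n}{24}\rfloor + 3$, while the bound gives $\min(A_k(C)) \le 2\lfloor \frac{n}{24}\rfloor + 2$, so indeed $k$ strictly exceeds the minimum norm and cannot be the minimizer.

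For the case $n = 23$, the Rains–Sloane statement is exceptional: there $\min(L) \le 3$ for a unimodular lattice $L$. With $k \ge 4$ we again have $k > 3 \ge \min(A_k(C))$, so the minimizer in $\min\{k, d_E(C)/k\}$ is $d_E(C)/k$, and $d_E(C)/k \le 3$ gives $d_E(C) \le 3k$, matching the statement.

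\emph{The main obstacle} is mostly bookkeeping rather than a genuine difficulty: one must be sure that the degenerate possibility $d_E(C)/k > k$ (so that $k$ is the minimizer) is excluded by the hypothesis, and that the floor quantities line up exactly so that the strict inequality $k > \min(A_k(C))$ holds in both cases. Since the lemma is flagged as ``somewhat trivial,'' I would expect the proof to be just these two short deductions, with the only care being the handling of the special dimension $n = 23$ and the correct reading of $\min(A_k(C)) = \min\{k, d_E(C)/k\}$.
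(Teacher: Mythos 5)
Your proof is correct and rests on exactly the same two ingredients as the paper's: the identity $\min(A_k(C)) = \min\{k, d_E(C)/k\}$ from Construction A and the Rains--Sloane bound on minimum norms, with the paper merely phrasing the argument as a contradiction (assuming $d_E(C) \ge 2k\lfloor \frac{n}{24}\rfloor + 3k$, using that Euclidean weights of codewords are divisible by $k$) where you argue directly that $k$ cannot be the minimizer. The two formulations are contrapositives of one another, so this is essentially the same proof.
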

\begin{proof}
Since both cases are similar, 
we only consider the case that $n \ne 23$ and 
$k \ge 2 \lfloor \frac{n}{24} \rfloor+3$.
Note that the Euclidean weight of a codeword of $C$ is
divisible by $k$.
Suppose that 
$d_E(C) \ge 2k \lfloor \frac{n}{24} \rfloor+3k$.
Since $\min(A_{k}(C))=\min\{k, d_{E}(C)/k\}$,
$\min(A_{k}(C)) \ge  2 \lfloor \frac{n}{24} \rfloor+3$,
which is a contradiction to the upper bound
on the minimum norms of unimodular lattices.
\end{proof}

\begin{lem}
Let $C$ be a self-dual $\ZZ_k$-code of length $48$.
Then $d_E(C) \le 6k$
for every positive integer $k$ with $k \ge 2$.
\end{lem}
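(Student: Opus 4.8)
The plan is to reduce to the finitely many small moduli $k\in\{2,3,4,5,6\}$ and to dispatch them individually, since $\lfloor 48/24\rfloor=2$ means that Lemma~\ref{lem:bound} already delivers $d_E(C)\le 2k\cdot 2+2k=6k$ for every $k\ge 2\lfloor 48/24\rfloor+3=7$. Throughout I will use, exactly as in the proof of Lemma~\ref{lem:bound}, that the Euclidean weight of a codeword of a self-dual code is divisible by $k$ and that $\min(A_k(C))=\min\{k,d_E(C)/k\}$.

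For $k=2,3,4$ the bound is immediate from \eqref{eq:kbound}: as $48\not\equiv 22,23\pmod{24}$, one reads off $d_E(C)\le 12,15,24$ respectively, each of which is at most $6k$. The case $k=6$ splits according to type. If $C$ is Type~II, the extremal Type~II bound quoted in the introduction (for $\ZZ_{2\cdot 3}$) gives $d_E(C)\le 4\cdot 3\cdot 2+4\cdot 3=36=6k$. If $C$ is Type~I, then $A_6(C)$ is an \emph{odd} unimodular lattice, because a self-dual $\ZZ_6$-code is Type~II precisely when $A_6(C)$ is even; hence the minimum-norm bound for odd lattices in dimension $24\cdot 2$ gives $\min(A_6(C))\le 5$, and since $\min(A_6(C))=\min\{6,d_E(C)/6\}$ this forces $d_E(C)/6\le 5$, i.e.\ $d_E(C)\le 30<36=6k$.

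The remaining case $k=5$ is where the naive lattice argument breaks down, and it is the main obstacle. Here $A_5(C)$ always contains the mutually orthogonal vectors $\sqrt5\,e_i$ of odd norm $5$, so it is an odd unimodular lattice with $\min(A_5(C))=\min\{5,d_E(C)/5\}\le 5$; the odd-lattice inequality is saturated by these ``glue'' vectors and yields no information about $d_E(C)$. To circumvent this I would pass to the theta series. A vector of $A_5(C)$ has norm $\tfrac15\|v\|^2$ with $v\equiv\rho(c)\pmod 5$ for some $c\in C$, and a vector of norm $6$ requires $\|v\|^2=30$; since $30$ is not a multiple of $25$ we have $v\notin 5\ZZ^{48}$, so the corresponding codeword $c$ is nonzero and $w_E(c)\le 30$. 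Thus it suffices to prove that \emph{every odd unimodular lattice of dimension $48$ and minimum norm $5$ has a vector of norm $6$}; this immediately gives $d_E(C)\le 30=6k$.

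The hard step is therefore the claim that the number $N_6$ of norm-$6$ vectors is positive for such lattices, i.e.\ that an odd unimodular $48$-dimensional lattice with $N_1=N_2=N_3=N_4=0$ cannot also satisfy $N_6=0$. I would settle this by the shadow/theta-series method of Rains--Sloane: the theta series of such a lattice lies in the seven-dimensional space spanned by $\theta_3^{\,48-8j}\Delta_8^{\,j}$ $(0\le j\le 6)$, where $\theta_3$ is the Jacobi theta constant and $\Delta_8$ the standard weight-$4$ form from the shadow theory; the vanishing of $N_1,\dots,N_4$ imposes four linear conditions, and the residual freedom is constrained by requiring the coefficients of the shadow theta series to be non-negative, whereupon a direct computation in this finite-dimensional space shows that the coefficient of $q^6$ cannot vanish. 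I expect this modular-forms computation to be the only genuinely non-routine part of the proof; all other cases are one-line consequences of \eqref{eq:kbound}, Lemma~\ref{lem:bound}, and the odd/even dichotomy for unimodular lattices in dimension $48$.
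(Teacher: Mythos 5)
Your reduction to $k\in\{2,3,4,5,6\}$ via Lemma~\ref{lem:bound} and the bound (\ref{eq:kbound}) is exactly what the paper does, and your treatment of $k=6$ (splitting into Type~II, where the bound $d_E(C)\le 36$ from \cite{HM12} applies, and Type~I, where $A_6(C)$ is odd and Gaulter's theorem \cite{Gaulter} forces $\min(A_6(C))\le 5$, hence $d_E(C)\le 30$) is a correct alternative to the paper's argument, which instead notes that $d_E(C)\ge 42$ would make $A_6(C)$ a minimum-norm-$6$ lattice with kissing number $96$, whereas extremal even unimodular lattices in dimension $48$ have kissing number $52416000$.

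The problem is $k=5$, where your proof is incomplete. You correctly reduce the case to a statement about theta series of odd unimodular $48$-dimensional lattices of minimum norm $5$, but the statement you need --- that $N_6>0$ for every such lattice --- is only asserted, with a sketch of a shadow-theoretic computation that you do not carry out (``a direct computation \dots shows that the coefficient of $q^6$ cannot vanish''). That computation is the entire content of the case, so as written this is a genuine gap; the claim is plausibly true, but verifying it is precisely the nontrivial step. You also chose a harder target than necessary: under the assumption $d_E(C)\ge 35$, every norm-$5$ vector of $A_5(C)$ must lie in $\sqrt{5}\,\ZZ^{48}$ (otherwise it would come from a nonzero codeword of Euclidean weight at most $25<35$), so the kissing number of $A_5(C)$ is exactly $96$; but \cite{HKMV} shows that optimal odd unimodular lattices in dimension $48$ have kissing number $385024$ or $393216$. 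This is the paper's one-line contradiction, and it needs only the already-known $q^5$ coefficient rather than a new positivity result for the $q^6$ coefficient. (The same counting of minimum-norm vectors is how the paper disposes of $k=6$ as well, without the Type~I/Type~II split.)
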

\begin{proof}
By the bound (\ref{eq:kbound}) and Lemma \ref{lem:bound},
it is sufficient to consider the cases for only $k=5,6$.
Assume that $k=5,6$ and $d_E(C) \ge 7k$.
Since $k < d_E(C)/k$, 
$\min(A_k(C))= k$ and the kissing number of $A_k(C)$ is $96$.
Note that unimodular lattices $L$ with $\min(L)=6$ and $5$ are
extremal even unimodular lattices and optimal odd 
unimodular lattices, respectively.
However, 
the kissing numbers of such lattices
are $52416000$ (see \cite[Chap.~7]{SPLAG}) and 
$385024$ or $393216$
\cite{HKMV}, respectively.
This is a contradiction.
\end{proof}


Hence, 
if $C$ is a self-dual $\ZZ_{k}$-code $C$  of length $n \le 48$ 
then we have the following bound:
\[
d_E(C) \le 
\begin{cases}
3k & \text{if $n =23$ and $k \ge 4$} \\
4 \lfloor \frac{n}{24} \rfloor+6
  & \text{if $n =22,46$ and $k=2$} \\  
20 & \text{if $n =47$ and $k=4$} \\  
2k \left\lfloor \frac{n}{24} \right\rfloor +2k & \text{otherwise.}
\end{cases}
\]
We say that a self-dual $\ZZ_{k}$-code meeting the bound 
with equality is 
{\em extremal}\footnote{For $k=3$, a self-dual code meeting
the bound (\ref{eq:kbound}) is usually called extremal.
However, we here adopt this definition
since we consider the existence of extremal
self-dual $\ZZ_k$-codes for all positive integers $k$
with $k \ge 2$, at once.
}
for length $n \le 48$.
We say that a self-dual code $C$ is 
{\em near-extremal} if $d_E(C)+k$ meets the bound.

The following lemma shows that an extremal self-dual
$\ZZ_{k}$-code of lengths $24$ and $48$ must be Type~II
for every even positive integer $k$.

\begin{lem}
\begin{itemize}
\item[\rm (a)]
Let $C$ be a Type~I $\ZZ_k$-code of length $24$.
Then $d_E(C) \le 3 k$
for every positive integer $k$ with $k \ge 2$.
\item[\rm (b)]
Let $C$ be a Type~I $\ZZ_k$-code of length $48$.
Then $d_E(C) \le 5 k$
for every positive integer $k$ with $k \ge 2$.
\end{itemize}
\end{lem}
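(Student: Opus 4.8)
The plan is to prove the sharper statement in contrapositive form: if a self-dual $\ZZ_k$-code $C$ of length $n\in\{24,48\}$ attains the extremal Euclidean weight permitted by the bounds proved just above (namely $d_E(C)=4k$ when $n=24$ and $d_E(C)=6k$ when $n=48$), then $C$ is forced to be Type~II, contradicting the hypothesis; since $d_E(C)$ is a multiple of $k$ this yields $d_E(C)\le 3k$ and $d_E(C)\le 5k$ respectively. The engine of the argument is the parity dictionary between $C$ and $A_k(C)$. For even $k$ we have that $A_k(C)$ is even if and only if $C$ is Type~II, while for odd $k$ the vector $\sqrt{k}\,e_i\in A_k(C)$ has odd norm $k$ and every self-dual code of odd modulus is Type~I; hence in all cases $C$ is Type~I precisely when $A_k(C)$ is an \emph{odd} unimodular lattice. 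I would combine this with $\min(A_k(C))=\min\{k,\,d_E(C)/k\}$ and with the divisibility of Euclidean weights by $k$ (so that $d_E(C)/k$ is a positive integer).

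First I would dispose of the generic range of $k$ using Gaulter's theorem \cite{Gaulter}. Since $24=24\cdot1$ and $48=24\cdot2$, an odd unimodular lattice has minimum norm at most $3$ in dimension $24$ and at most $5$ in dimension $48$. If $C$ is Type~I, then $A_k(C)$ is odd, so $\min\{k,\,d_E(C)/k\}\le 3$ (resp. $\le 5$). When $n=24$ and $k\ge 4$ this forces $d_E(C)/k\le 3$, i.e. $d_E(C)\le 3k$; when $n=48$ and $k\ge 6$ it forces $d_E(C)/k\le 5$, i.e. $d_E(C)\le 5k$. This settles part~(a) for all $k\ge4$ and part~(b) for all $k\ge 6$.

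It remains to treat the small moduli, where $\min(A_k(C))$ does not reach the extremal even value and Gaulter's theorem no longer applies. For $k=2$ the Euclidean weight equals the Hamming weight, so $C$ is a binary self-dual code of length divisible by $24$; by Rains' result \cite{Rains} (recalled after (\ref{eq:kbound})) a code meeting $4\lfloor n/24\rfloor+4$ is Type~II, so a Type~I code has $d_E(C)\le 4\lfloor n/24\rfloor+2$, which equals $3k$ for $n=24$ and $5k$ for $n=48$. For $k=3$ the bound (\ref{eq:kbound}) already gives $d_E(C)\le 3\lfloor n/12\rfloor+3$, namely $9=3k$ for $n=24$ and $15=5k$ for $n=48$, so nothing further is needed. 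For the case $n=48$, $k=5$, suppose $d_E(C)=30$; then $\min(A_5(C))=5$, and since any nonzero codeword contributes a vector of norm at least $d_E(C)/5=6>5$, the only minimal vectors of $A_5(C)$ are the $2\cdot 48=96$ vectors $\pm\sqrt5\,e_i$. But $A_5(C)$ is an odd, hence optimal, unimodular lattice in dimension $48$, whose kissing number is $385024$ or $393216$ by \cite{HKMV}; the contradiction gives $d_E(C)\le 25=5k$.

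The genuinely delicate case is $n=48$, $k=4$, and this is where I expect the main difficulty. When $d_E(C)=24$ one has $\min(A_4(C))=\min\{4,6\}=4$, so $A_4(C)$ is an odd unimodular lattice of minimum $4$, strictly below the Gaulter ceiling $5$, and exactly as above its kissing number is $96$. Unlike the case $k=5$, minimum-$4$ lattices in dimension $48$ are not extremal and are far from classified, and the admissible theta series leave the coefficient of $q^4$ unconstrained, so neither Gaulter's theorem nor a kissing-number count can exclude such a lattice. The plan is therefore to argue on the code side: I would pass to the binary residue and torsion codes $R\subseteq R^\perp=T$ of the self-dual $\ZZ_4$-code $C$, observe that $d_E(C)=24$ forces $d(T)\ge 6$ together with strong restrictions on the Euclidean weights in the cosets of $R$, and then invoke the $\ZZ_4$-analogue of the shadow/invariant-theory argument (equivalently the binary shadow bound of Rains applied to these codes) to show that the symmetrized Euclidean weight enumerator must be that of a Type~II code. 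Carrying out this enumerator computation, and verifying that the extremal weight profile is incompatible with a Type~I shadow, is the step I expect to demand the most care.
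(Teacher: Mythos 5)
Your overall strategy coincides with the paper's: treat the extremal value $d_E(C)=4k$ (resp.\ $6k$) as the case to exclude, use $\min(A_k(C))=\min\{k,d_E(C)/k\}$ together with the fact that a Type~I code yields an \emph{odd} lattice, kill the large-$k$ range via the minimum-norm bound for odd unimodular lattices, the case $k=5$, $n=48$ via the kissing numbers $385024$ and $393216$ of optimal odd $48$-dimensional lattices \cite{HKMV}, and $k=2,3$ via the bound (\ref{eq:kbound}) and the Rains Type~II remark. All of those cases are handled correctly and essentially as in the paper.

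The genuine gap is the case $n=48$, $k=4$, which you yourself flag as the delicate one but do not actually prove. Your plan --- pass to the residue and torsion codes of the $\ZZ_4$-code and ``invoke the $\ZZ_4$-analogue of the shadow/invariant-theory argument'' to force a Type~II symmetrized weight enumerator --- is only a sketch: no such off-the-shelf result is cited or established, the claimed ``strong restrictions on the Euclidean weights in the cosets of $R$'' are not derived, and the decisive enumerator computation is deferred. As written, the lemma is not proved for this case. Moreover, your reason for abandoning the lattice side (``the admissible theta series leave the coefficient of $q^4$ unconstrained, so neither Gaulter's theorem nor a kissing-number count can exclude such a lattice'') is mistaken: you are discarding one more piece of information that $d_E(C)=24$ gives you, namely that $A_4(C)$ has \emph{no vectors of norm $5$} (vectors in a nonzero coset of $4\ZZ^{48}$ have norm at least $d_E(C)/4=6$, and vectors from the zero coset have norm divisible by $4$). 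With the three constraints $\min=4$, kissing number $96$, no norm-$5$ vectors, the Conway--Sloane relations \cite[(2) and (3)]{CS-odd} determine the theta series of $A_4(C)$ and of its shadow $S$ up to a single integer parameter $\alpha$, with $\theta_S(q)=\alpha+(96-96\alpha)q^2+(-4416+4512\alpha)q^4+\cdots$; non-negativity of these coefficients forces $\alpha=1$, so $S$ contains the zero vector and $A_4(C)$ is even, i.e.\ $C$ is Type~II --- the contradiction you need. This is exactly how the paper closes the case, and it is considerably shorter and more certain than the code-theoretic route you propose.
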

\begin{proof}
We give the proof of (b).
By the bound (\ref{eq:kbound}),
it is sufficient to consider only $k \ge 4$.
Assume that $k \ge 4$ and $d_E(C) = 6 k$.
If $k \ge 6$ then $A_k(C)$ has minimum norm $6$.
Hence, $A_k(C)$ must be even, that is, $C$ is Type~II,
which is a contradiction.
Suppose that $k=5$.
Then $A_5(C)$ is an optimal odd unimodular lattice with
kissing number $96$,
which contradicts that
the kissing number is $385024$ or $393216$ \cite{HKMV}.
Finally, suppose that $k=4$.
Since $d_E(C) = 24$, 
$A_4(C)$ satisfies the condition that
$\min(A_4(C))=4$, the kissing number is $96$ and there is no
vector of norm $5$.
By \cite[(2) and (3)]{CS-odd}, one can determine the 
possible theta series of 
$A_4(C)$ and its shadow $S$ as follows:
\[
\begin{cases}
\theta_{A_4(C)}(q) &=
1 + 96 q^4 + (35634176 + 16777216 \alpha)q^6 
+ \cdots, \\
\theta_{S}(q) &=
\alpha + (96 - 96 \alpha )q^2 +(- 4416 + 4512 \alpha) q^4 + \cdots,
\end{cases}
\]
respectively, where $\alpha$ is an integer.
From the coefficients of $\theta_{S}(q)$,
it follows that $\alpha=1$.
Hence, since $S$ contains the zero-vector,
$A_4(C)$ must be  even, that is, $C$ is Type~II.

The proof of (a) is similar to that of (b), and
it can be completed more easily.
So the proof is omitted.
\end{proof}


The odd Leech lattice contains a $k$-frame for 
every positive integer $k$ with $k \ge 3$~\cite{Miezaki}.
The binary odd Golay code is a near-extremal Type~I code of length $24$.
Hence, there is a near-extremal Type~I $\ZZ_k$-code of length $24$
for every positive integer $k$ with $k \ge 2$.

\subsection{Negacirculant matrices}
\label{subsec:M}

An  $n \times n$ matrix $M$ is {\em negacirculant} if
$M$ has the following form:
\[
\left( \begin{array}{ccccc}
r_0     &r_1     & \cdots &r_{n-1} \\
-r_{n-1}&r_0     & \cdots &r_{n-2} \\
-r_{n-2}&-r_{n-1}& \cdots &r_{n-3} \\
\vdots  & \vdots && \vdots\\
-r_1    &-r_2    & \cdots&r_0
\end{array}
\right).
\]
Most of matrices constructed in this paper 
are based on negacirculant matrices.
In Section \ref{sec:main}, 
in order to construct self-dual $\ZZ_k$-codes of length $4n$,
we consider a generator 
matrix  of the following form:
\begin{equation} \label{eq:GM}
\left(
\begin{array}{ccc@{}c}
\quad & {\Large I_{2n}} & \quad &
\begin{array}{cc}
A & B \\
-B^T & A^T
\end{array}
\end{array}
\right),
\end{equation}
where $A$ and $B$ are $n \times n$ negacirculant matrices,
$A^T$ denotes the transpose of the matrix $A$
and 
$I_k$ denotes the identity matrix of order $k$.
It is easy to see that the code is self-dual if
$AA^T+BB^T=-I_n$.


In Section \ref{sec:frame}, 
in order to find $k$-frames in some lattices, 
we need to construct matrices $M$ satisfying 
the condition (\ref{eq:condition}) 
in Proposition \ref{prop:const}.
Suppose that $p$ is a prime and $p \equiv 3 \pmod 4$.
Let $Q_{p}=(q_{ij})$ be a $p$ by $p$ matrix where
$q_{ij}=0$ if $i=j$, 
$-1$ if $j-i$ is a nonzero square $\pmod p$, and 
$1$ otherwise.
We consider the following matrix:
\[
P_{p+1} = 
\left(\begin{array}{ccccccc}
 0       & 1  & \cdots      &1  \\
 -1      & {} & {}          &{} \\
 \vdots  & {} &  Q_{p}  &{} \\
 -1      & {} & {}          &{} \\
\end{array}\right).
\]
Then it is well known that 
$P_{p+1}P_{p+1}^T=pI_{p+1}$ and 
$P_{p+1}^T=-P_{p+1}$, and 
$P_{p+1}+I_{p+1}$ is a Hadamard matrix, which
is equivalent to the Paley Hadamard matrix of order $p+1$.
Hence, these matrices satisfy (\ref{eq:condition}).
In Section \ref{sec:frame}, we 
construct more matrices $M$ satisfying (\ref{eq:condition})
using the following form:
\begin{equation} \label{eq:M}
\left(
\begin{array}{cc}
A & B \\
-B^T & A^T
\end{array}
\right),
\end{equation}
where $A$ and $B$ are $n \times n$ negacirculant matrices.


\subsection{Positivity of coefficients of the theta series}

It is important to study
the positivity and non-negativity of coefficients of 
the theta series of extremal unimodular lattices. 
For example, let $\sum_{m=0}^{\infty}A_{m}q^m$ be the theta series
of an even unimodular lattice in dimension $n$.
Then it was shown in~\cite{Siegel}
that the coefficient $A_{2 \lfloor \frac{n}{24} \rfloor+2}$ 
is always positive when
$A_2=A_4=\cdots =A_{2\lfloor \frac{n}{24} \rfloor} =0$
(see also \cite{MOS75}).
This gives 
the upper bound of the minimum norm 
of even unimodular lattices as in Section \ref{sec:2U}. 

To discuss the
positivity of coefficients of the theta series of 
extremal (optimal) unimodular lattices 
listed in Table \ref{Tab:L},
the following lemma is used.

\begin{lem}\label{lem:T}
Let $L$ be a unimodular lattice in dimension $n$
with theta series $\sum_{m=0}^{\infty}A_{m}q^m$.
If $L$ contains a $k$-frame then $A_k \ge 2n$.
\end{lem}

\begin{rem}
As described in Section \ref{Sec:1}, 
the odd Leech lattice contains a $k$-frame for 
every positive integer $k$ with $k \ge 3$~\cite{Miezaki}.
By the above lemma, $A_{m} \ge 48$
for every positive integer $m$ with $m \ge 3$.
\end{rem}

\begin{rem}
At dimensions $n=20, 28, 32$, 
there are other unimodular lattices with
the same theta series as 
one of the unimodular lattices listed in Table \ref{Tab:L}. 
Of course, it also holds that
$A_{m} \ge 2n$
for every positive integer $m \ge 2,3,4$ if $n=20,28,32$, 
respectively, for the other lattices.
\end{rem}



\section{Number theoretical results}\label{sec:NT}

Let $k,\ell$ and $m$ be positive integers with 
$k \ge 2$ and $\ell \le k-1$.
Consider the following lattice in dimension $4$:
\begin{equation}\label{eq:L}
L_{\ell,m,k}=\{(a, b, c, d)\in\ZZ^4
\mid b \equiv c-\ell d \!\!\!\pmod{k} \text{ and } 
d \equiv a+\ell b \!\!\!\pmod{k}\}, 
\end{equation}
where we consider the inner product $\langle x,y \rangle$
induced by 
$(a^2+mb^2+c^2+md^2)/k$, instead of the standard inner product.
The theta series $\theta_{L_{\ell,m,k}}(q)$ of $L_{\ell,m,k}$ is 
$\sum_{x \in L_{\ell,m,k}} q^{\langle x,x \rangle}$.

\begin{lem}\label{lem:lattice}
If $m+\ell^2 \equiv -1 \pmod{k}$
then
$\theta_{L_{\ell,m,k}}(z)$ 
is a modular form (of weight $2$) for $\Gamma_0(4m)$, 
where $q=e^{2\pi i z}$, $z$ is in the upper half plane and 
\[
\Gamma_0(N)=
\left\{
\begin{pmatrix}
a&b\\
c&d
\end{pmatrix}
\in SL_2(\ZZ) \mid   c\equiv 0\pmod{N}
\right\}.
\] 
\end{lem}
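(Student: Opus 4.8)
We have a lattice $L_{\ell,m,k} \subset \mathbb{Z}^4$ defined by two congruence conditions mod $k$, equipped with the quadratic form $Q(a,b,c,d) = (a^2 + mb^2 + c^2 + md^2)/k$. We want to show its theta series is a weight-2 modular form for $\Gamma_0(4m)$, under the condition $m + \ell^2 \equiv -1 \pmod k$.

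**Why condition matters:**

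The condition $m + \ell^2 \equiv -1 \pmod k$ must ensure the form is integer-valued (so we actually get a lattice), and probably makes it even/integral in the right way so the theta series has nice transformation properties. Let me think about what this gives us.

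The general theory: theta series of positive-definite quadratic forms in $n$ variables are modular forms of weight $n/2$. Here $n=4$, so weight 2. The level is determined by the form. This is exactly the kind of result that follows from classical theta transformation formulas.

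**My proof plan:**

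The plan is to recognize $\theta_{L_{\ell,m,k}}$ as a theta series of a specific 4-dimensional positive-definite integral (or even) quadratic form, then invoke the standard theorem that such theta series are modular forms of weight $n/2 = 2$ for $\Gamma_0(N)$, where $N$ is determined by the level of the form.

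Let me make this concrete. The quadratic form, as a function on $\mathbb{Z}^4$ via the lattice, has Gram matrix (up to scaling by $1/k$) essentially $\text{diag}(1, m, 1, m)$. The lattice conditions are sublattice conditions on $\mathbb{Z}^4$. So $L_{\ell,m,k}$ is a sublattice of $\mathbb{Z}^4$, and the form $Q$ is $\frac{1}{k}$ times the form $x^2 + my^2 + z^2 + mw^2$.

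First, I would set up the theta series transformation. Standard theory (e.g. Schoeneberg, or Miyake's book which is cited) says: given a positive-definite even integral quadratic form $S$ of rank $n$ (even $n$), the theta series $\theta_S(z) = \sum_{x} e^{\pi i z \, x^T S x}$ is a modular form of weight $n/2$ for $\Gamma_0(N)$ with character $\chi_{\det}$, where $N$ is the level of $S$ (the smallest integer with $N S^{-1}$ even integral).

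The diagonal form $x^2 + my^2 + z^2 + mw^2$ has Gram matrix $2\,\text{diag}(1,m,1,m)$ (accounting for the factor 2 in even forms) with determinant proportional to $m^2$, hence level dividing $4m$. That's where $\Gamma_0(4m)$ comes from.

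The congruence conditions cutting out $L_{\ell,m,k}$ and the rescaling by $1/k$ interact via the condition $m+\ell^2 \equiv -1 \pmod k$. I suspect this condition guarantees that the lattice $L_{\ell,m,k}$ with form $Q$ is itself integral (all inner products in $\mathbb{Z}$) — one should verify that $\langle x, x\rangle \in \mathbb{Z}$ for all $x \in L_{\ell,m,k}$ by substituting the congruences.

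\medskip

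\noindent\textbf{The steps I would carry out.}
First, I would verify that $L_{\ell,m,k}$ is genuinely a lattice and that $Q$ restricted to it is integer-valued; this is where the hypothesis $m+\ell^2\equiv -1\pmod k$ enters. Writing a generic element using the two defining congruences, one substitutes $b = c - \ell d + k\beta$ and $d = a + \ell b + k\delta$ and expands $a^2 + mb^2 + c^2 + md^2$ modulo $k$; the cross terms should collapse precisely when $m + \ell^2 \equiv -1$. Second, I would identify $L_{\ell,m,k}$ (with form $kQ = a^2+mb^2+c^2+md^2$) as an integral lattice of rank $4$ and compute its Gram matrix $G$ explicitly in a chosen basis. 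Third, I would compute the level $N$ of $G$, i.e. the smallest $N$ such that $N G^{-1}$ is integral (and even on the diagonal), and check $N \mid 4m$. Fourth, I would invoke the classical theorem (as in Miyake~\cite{Miyake}, or the theta-transformation formula of Schoeneberg) that the theta series of a positive-definite even integral quaternary form of level $N$ is a modular form of weight $2$ for $\Gamma_0(N)$. Since $N \mid 4m$ and modular forms for $\Gamma_0(N)$ are also modular for $\Gamma_0(4m)$ when $N \mid 4m$, this gives the result.

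\medskip

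\noindent\textbf{Main obstacle.}
The main work — and the step most likely to be delicate — is the explicit computation of the Gram matrix $G$ of $L_{\ell,m,k}$ in an integral basis and the verification that its level divides $4m$ \emph{uniformly in} $\ell$ and $k$. The congruence conditions mix the coordinates in an $\ell$-dependent way, so a priori the discriminant and level could depend on $\ell$ and $k$; showing that the level is controlled purely by $m$ (so that $\Gamma_0(4m)$ suffices regardless of $\ell, k$) is the crux. I expect the hypothesis $m + \ell^2 \equiv -1 \pmod k$ to be exactly what forces the discriminant form to be independent of $\ell$ modulo the relevant level, collapsing the dependence on $\ell$ and $k$. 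A clean way to organize this is to exhibit an explicit isometry (or a change of basis over $\mathbb{Z}$) between $L_{\ell,m,k}$ and a fixed reference lattice whose form is visibly $\frac{1}{k}(a^2+mb^2+c^2+md^2)$ on a standard sublattice, thereby reducing to the diagonal quaternary form $\langle 1, m, 1, m\rangle$ whose modularity for $\Gamma_0(4m)$ is classical.
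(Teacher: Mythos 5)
Your plan is essentially the paper's proof: the paper picks the explicit basis $(k,0,0,0)$, $(0,0,k,0)$, $(1,0,\ell+1,1)$, $(0,1,\ell^2+1,\ell)$, writes down the Gram matrix $M$, checks that $mM^{-1}$ is integral precisely because $m+\ell^2\equiv -1\pmod k$ (so the level divides $4m$, with $\det M=m^2$), and cites Miyake's Corollary~4.9.2 — exactly the steps you outline. The ``main obstacle'' you flag (uniformity of the level in $\ell$ and $k$) is resolved exactly as you predict, by the integrality of $mM^{-1}$ under the stated hypothesis.
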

\begin{proof}
The lattice $L_{\ell,m,k}$ is spanned by 
$(k,0,0,0)$, $(0,0,k,0)$, $(1,0,\ell+1,1)$ and $(0,1,\ell^2+1,\ell)$
with Gram matrix:
\[
M=
\begin{pmatrix}
k& 0& 1& 0 \\
0& k& \ell& \ell^2 + 1 \\
1& \ell& (\ell^2 + m + 1)/k& (\ell^2 + m + 1)\ell/k \\
0& \ell^2 + 1& (\ell^2 + m + 1)\ell/k& (\ell^2 + m + 1)(\ell^2 + 1)/k
\end{pmatrix}.
\]
Since 
\[
m M^{-1}=
\begin{pmatrix}
 (1+\ell^2+m)/k & 0            & -1-\ell^2  & \ell \\
 0            & (1+\ell^2+m)/k& 0         & -1\\
 -1-\ell^2     & 0            &k+k \ell^2& -k\ell\\
 \ell            & -1           & -k \ell      & k 
\end{pmatrix}
\]
and  $m+\ell^2 \equiv -1 \pmod{k}$,
$m M^{-1}$ has integer entries.
Since $\det M=m^2$, 
$\theta_{L_{\ell,m,k}}(z)$ is a modular form (of weight $2$) for 
$\Gamma_0(4m)$~\cite[Corollary~4.9.2]{Miyake}. 
\end{proof}

In order to give infinite families of $k$-frames 
by Proposition \ref{prop:const},
we derive the following theorem.
Its proof is similar to that in \cite{Chapman, HM12, Miezaki},
but this is more complicated.
Our notation and terminology for modular forms
follow from~\cite{Miyake}
(see~\cite{Miyake} for undefined terms).


\begin{thm}\label{thm:prime}
\begin{itemize}
\item[\rm (a)]\label{thm:1}
There are integers $a,b,c$ and $d$ satisfying
$b \equiv c-d \pmod 3$,
$d \equiv a+b \pmod 3$ and 
$p=\frac{1}{3}(a^2+25b^2+c^2+25d^2)$ for each prime $p \ne 2, 5, 7, 13, 23$.

\item[\rm (b)] \label{thm:2}
There are integers $a,b,c$ and $d$ satisfying
$b \equiv c-2d \pmod 4$,
$d \equiv a+2b \pmod 4$ and 
$p=\frac{1}{4}(a^2+7b^2+c^2+7d^2)$ for each prime $p \ne 2,7$.

\item[\rm (c)] \label{thm:7}
There are integers $a,b,c$ and $d$ satisfying
$b \equiv c \pmod 5$,
$d \equiv a \pmod 5$ and 
$p=\frac{1}{5}(a^2+49b^2+c^2+49d^2)$ for each prime $p \ne 2, 3, 7, 11,19,29$.

\item[\rm (d)] \label{thm:6}
There are integers $a,b,c$ and $d$ satisfying
$b \equiv c-2d \pmod 5$,
$d \equiv a+2b \pmod 5$ and 
$p=\frac{1}{5}(a^2+25b^2+c^2+25d^2)$ for each prime $p \ne 2, 3, 17$.

\item[\rm (e)] \label{thm:3}
There are integers $a,b,c$ and $d$ satisfying
$b \equiv c-2d \pmod 4$,
$d \equiv a+2b \pmod 4$ and 
$p=\frac{1}{4}(a^2+15b^2+c^2+15d^2)$ for each prime $p \ne 2,3$.

\item[\rm (f)] \label{thm:4}
There are integers $a,b,c$ and $d$ satisfying
$b \equiv c-2d \pmod 6$,
$d \equiv a+2b \pmod 6$ and 
$p=\frac{1}{6}(a^2+49b^2+c^2+49d^2)$ for each prime $p \ne 2, 3, 5,7$.

\item[\rm (g)] \label{thm:5}
There are integers $a,b,c$ and $d$ satisfying
$b \equiv c \pmod 4$,
$d \equiv a \pmod 4$ and 
$p=\frac{1}{4}(a^2+19b^2+c^2+19d^2)$ for each prime $p \ne 2, 3, 13, 19$.

\item[\rm (h)] \label{thm:8}
There are integers $a,b,c$ and $d$ satisfying
$b \equiv c \pmod 5$,
$d \equiv a \pmod 5$ and 
$p=\frac{1}{5}(a^2+39b^2+c^2+39d^2)$ for each prime $p \ne 2, 3, 7, 17$.

\end{itemize}
\end{thm}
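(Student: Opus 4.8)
Each of the eight statements (a)–(h) asserts that a certain quaternary quadratic form represents every prime $p$ outside a finite exceptional set, subject to congruence conditions that are exactly the defining conditions of a lattice $L_{\ell,m,k}$ from \eqref{eq:L}. The natural strategy, following the method referenced from \cite{Chapman, HM12, Miezaki}, is to recognize the count of representations of $p$ as a coefficient in the theta series $\theta_{L_{\ell,m,k}}(z)$, which by Lemma~\ref{lem:lattice} is a modular form of weight $2$ for $\Gamma_0(4m)$. Concretely, in each part the parameters $(\ell,m,k)$ are chosen so that the hypothesis $m+\ell^2 \equiv -1 \pmod{k}$ of Lemma~\ref{lem:lattice} holds; for instance in (a) we have $(\ell,m,k)=(1,25,3)$ with $1+25=26\equiv -1\pmod 3$, in (b) $(\ell,m,k)=(2,7,4)$ with $4+7=11\equiv -1 \pmod 4$, and so on through the list. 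Thus in every case $\theta_{L_{\ell,m,k}}(z)$ lands in the finite-dimensional space $M_2(\Gamma_0(4m))$, and the number we must show is positive is the coefficient $A_p$ of $q^{p}$ in this theta series.

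\medskip

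\noindent The key steps, in order, are these. First I would fix the dimension and an explicit basis for $M_2(\Gamma_0(4m))$, using the standard decomposition into an Eisenstein subspace and a cusp-form subspace $S_2(\Gamma_0(4m))$; a basis can be written down from Eisenstein series attached to the divisors of $4m$ together with (for the larger levels) newforms and their oldform translates. Second, I would expand the theta series $\theta_{L_{\ell,m,k}}(z) = E(z) + f(z)$, where $E$ is its Eisenstein component and $f \in S_2(\Gamma_0(4m))$ is the cuspidal component; this expansion is determined by matching finitely many initial Fourier coefficients, which is a routine (machine-assisted, via \textsc{Magma}) linear-algebra computation. Third, I would bound the two pieces separately: the $p$-th coefficient $a_E(p)$ of the Eisenstein part is a multiplicative, explicitly computable arithmetic function that grows like a positive constant times $p$, while the $p$-th coefficient $a_f(p)$ of the cusp form is controlled by the Hecke/Deligne bound $|a_f(p)| \le C\, p^{1/2}$ for an effective constant $C$ depending only on the (finite) basis of $S_2(\Gamma_0(4m))$. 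Fourth, combining these, $A_p = a_E(p) + a_f(p) \ge (\text{const})\,p - C\,p^{1/2}$, which is strictly positive once $p$ exceeds an explicit threshold; the finitely many primes below the threshold, together with any primes where the Eisenstein part vanishes for local reasons, are checked by hand or by \textsc{Magma}, and these are precisely the excluded primes listed in each part.

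\medskip

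\noindent I expect the main obstacle to be twofold. The first difficulty is purely bookkeeping but genuinely delicate: the Eisenstein coefficient $a_E(p)$ is not simply a constant multiple of $p$ but carries character and divisor-sum factors, so its positivity and lower bound must be checked case by case, and it is exactly the primes dividing $m$ (or satisfying a bad quadratic-residue condition modulo the relevant conductor) that force the exceptional sets to be nonempty—matching, say, the exclusion of $p=5,7,13,23$ in (a) or $p=3,13,19$ in (g). The second and harder difficulty is that several of these levels $4m$ (e.g. $4m=100,196,156$) support nontrivial cusp spaces $S_2(\Gamma_0(4m))$ of positive dimension, so the cuspidal error term is not identically zero and the Deligne bound must be applied with a carefully computed constant $C$; getting a clean effective threshold requires either explicitly diagonalizing the Hecke action to pin down the newform coefficients or bounding $\sum |c_i|$ over the chosen basis. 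This is the sense in which, as the authors remark, the argument is "more complicated" than in the earlier papers: the presence of cusp forms at these higher levels means the positivity of $A_p$ is a genuine inequality between a linear and a square-root term rather than an immediate consequence of an Eisenstein-only formula.
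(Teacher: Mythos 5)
Your proposal follows essentially the same route as the paper: the authors write $\theta_{L_{\ell,m,k}}$ as an explicit Eisenstein-type piece (built from the eta quotients $\eta(4Nz)^8/\eta(2Nz)^4$, contributing $\tfrac{4}{15}(p+1)$ in case (a)) plus an explicit combination of cusp forms, verify the identity of $q$-expansions by checking coefficients up to the Sturm bound (after twisting by $\chi_2$), diagonalize the Hecke action to apply the Deligne bound $|c(p)|\le 2\sqrt p$ with an explicit constant (e.g.\ $12$ in case (a)), and finish with a {\sc Magma} check of the finitely many primes below the resulting threshold. The only implementation detail you leave implicit is that "matching finitely many initial coefficients" must be justified by a Sturm-type bound, which the paper supplies via Murty's theorem and the genus of $\Gamma_0(4N)$.
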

\begin{proof}
We only give details for Case (a), to save space.
The ideas of the proofs of the other cases
are similar to that of Case (a),
which is the most complicated case,
where main different parts are mentioned in 
Tables \ref{Tab:N1}--\ref{Tab:N6}.


Consider the lattice $L_{1,25,3}$ given in (\ref{eq:L}).
We have verified by {\sc Magma} that 
it has the following theta series: 
\begin{align*}
\theta_{L_{1,25,3}}(q)&
=1 + 4q^3 + 4q^6 + 4q^9 + 8q^{10} + 4q^{11} + \cdots 
=\sum_{n=0}^{\infty}a(n)q^n\ (\text{say}). \nonumber
\end{align*}
By Lemma \ref{lem:lattice}, $\theta_{L_{1,25,3}}(z)$
is a modular form for $\Gamma_0(100)$,
where $q=e^{2\pi i z}$, $z$ is in the upper half plane 
(see Table \ref{Tab:N1} for the other cases).

\begin{table}[thb]
\caption{$\Gamma_0(N)$, $\dim(S_2(\Gamma_0(N)))$  and 
genera  of $\Gamma_0(4N)$}
\label{Tab:N1}
\begin{center}
{\footnotesize
\begin{tabular}{c|c|c|c} 
\noalign{\hrule height0.8pt}
Case & \multicolumn{1}{|c|}{$\Gamma_0(N)$} & $\dim(S_2(\Gamma_0(N)))$ &
Genus of $\Gamma_0(4N)$ \\
\hline
(b) & $\Gamma_0(28)$ & $2$ & $11$ \\\hline
(c) & $\Gamma_0(196)$ & $17$ & $89$ \\\hline
(d) & $\Gamma_0(100)$ & $7$ & $43$ \\\hline
(e) & $\Gamma_0(60)$ & $7$ & $37$ \\\hline
(f) & $\Gamma_0(196)$ & $17$ & $89$ \\\hline
(g) & $\Gamma_0(76)$ & $8$ & $35$ \\\hline
(h) & $\Gamma_0(156)$ & $23$ & $101$ \\
\noalign{\hrule height0.8pt}
\end{tabular}
}
\end{center}
\end{table}

We denote by $S_2(\Gamma_0(N))$ 
the space of cusp forms of weight $2$ for $\Gamma_0(N)$. 
It is known that 
$\dim(S_2(\Gamma_0(100)))$ 
is seven 
(see Table \ref{Tab:N1} for the other cases),
and using {\sc Magma} 
we have found some basis 
$f_1(z)$, $f_2(z)$, $f_3(z)$,
$f_4(z)$, $f_5(z)$, $f_7(z)$, $f_{9}(z)$ 
such that 
\[
f_i(z)=q^i+c_{i, 6}q^{6}+c_{i, 7}q^{7}+\cdots, 
\]
for $i=1,2,3,4,5$,
$f_7(z)=q^{7}+\cdots$,  and
$f_9(z)=q^{9}+\cdots$. 
In particular, we use $f_i(z)\ (i=1,3,5,7,9)$, 
which are explicitly written as:
\begin{align}
\left\{
\begin{array}{rl}\label{eqn:Cusp}
f_1(z)&=   q - q^{11} - q^{19} - 2q^{21} + 4q^{29} +\cdots, \\
f_3(z)&=q^3 - 2q^{13} + q^{17} - 3q^{27} + \cdots,\\
f_5(z)&=  q^5 - 2q^{15} - q^{25} + \cdots,\\
f_7(z)&= q^7 - q^{13} - 2q^{17} + 3q^{23} - q^{27} + \cdots,\\ 
f_9(z)&=  q^9 + q^{11} - 3q^{19} - 2q^{21} + 2q^{29} + \cdots  
\end{array}
\right.
\end{align}
(see Table \ref{Tab:N2} for the other cases).
For $i=1,3,5,7,9$, we denote by $c_{f_{i}}(n)$ 
the coefficient of $f_{i}(z)$ as follows: 
\[
f_{i}(z)=\sum_{n=1}^{\infty}c_{f_{i}}(n)q^n.
\]

\begin{table}[thb]
\caption{Some basis $f_i(z)$ in (\ref{eqn:Cusp})}
\label{Tab:N2}
\begin{center}
{\footnotesize
\begin{tabular}{c|ll}
\noalign{\hrule height0.8pt}
Case & \multicolumn{2}{c}{$f_i(z)$}  \\
\hline
(b)&$f_1(z)=q - 2q^3 + q^7 + q^9 + \cdots$ & \\
\hline
(c)&$f_1(z)=   q+q^{23}-q^{29}+\cdots$&$f_3(z)= q^3-q^{27}+ \cdots$\\
& 
$f_5(z)=  q^5-2 q^{19}+q^{27}+ \cdots$&$f_7(z)= q^7-2 q^{21}+ \cdots$ \\
&
$f_9(z)= q^9-q^{23}+\cdots$&$f_{11}(z)= q^{11}+q^{23}+2 q^{29}+ \cdots$ \\
& $f_{13}(z)= q^{13}+q^{19}-2 q^{27}+ \cdots$&$
f_{15}(z)= q^{15}+q^{23}-q^{29}+ \cdots$ \\
&$f_{17}(z)= q^{17}+q^{19}-q^{27}+ \cdots$&$
f_{25}(z)= q^{25}+q^{29}+ \cdots$  \\ \hline
(d)
&$f_1(z)=   q - q^{11} +\cdots$&$f_3(z)=q^3 - 2q^{13} +\cdots$\\
& $f_5(z)=  q^5 - 2q^{15} + \cdots$&$ f_7(z)= q^7 - q^{13} + \cdots$\\ 
& $f_9(z)=  q^9 + q^{11} + \cdots$ & 
\\ \hline
(e)&$f_1(z)=   q - q^9 +\cdots$ &$f_3(z)= q^3 - 2q^{9} + \cdots$\\
&
$f_5(z)=  q^5 - 2q^{11} + \cdots$&$f_7(z)= q^7 - q^9 +\cdots 
$\\
\hline
(f)&$f_1(z)=   q+q^{23}-q^{29}+\cdots$&$f_3(z)= q^3-q^{27}+ \cdots$\\
& 
$f_5(z)=  q^5-2 q^{19}+q^{27}+ \cdots$&$f_7(z)= q^7-2 q^{21}+ \cdots$ \\
&
$f_9(z)= q^9-q^{23}+\cdots$&$f_{11}(z)= q^{11}+q^{23}+2 q^{29}+ \cdots$ \\
& $f_{13}(z)= q^{13}+q^{19}-2 q^{27}+ \cdots$&$
f_{15}(z)= q^{15}+q^{23}-q^{29}+ \cdots$ \\
&$f_{17}(z)= q^{17}+q^{19}-q^{27}+ \cdots$&$
f_{25}(z)= q^{25}+q^{29}+ \cdots$  \\ \hline
(g)&$f_1(z)=   q - 2q^9 +\cdots$&$ f_3(z)= q^3 - 2q^9 + \cdots$\\
& $f_5(z)=  q^5 - q^9  + \cdots$&$ f_7(z)= q^7 - 2q^9  +\cdots$ \\ \hline
(h)&
$f_1(z)=   q - q^{25} - q^{27} +\cdots$ & 
$f_3(z)= q^3 + q^{23} - q^{25} - q^{27} - 2q^{29} +\cdots$ \\
&$f_5(z)=  q^5 + q^{23} - 3q^{25} - 2q^{29} + \cdots$ & 
$f_7(z)= q^7 - 2q^{25} +\cdots$ \\
&$f_9(z)=  q^9 + 2q^{23} - 2q^{25} - 3q^{27} - 2q^{29} + \cdots$ & 
$f_{11}(z)= q^{11} - q^{27} + q^{29} + \cdots$ \\
&$f_{13}(z)= q^{13} - 2q^{25} - 6q^{29} + \cdots$ & 
$f_{15}(z)= q^{15} + q^{23} - q^{25} - q^{27} - q^{29} + \cdots$ \\
&$f_{17}(z)= q^{17} + q^{23} - 2q^{25} - q^{27} - 2q^{29} + \cdots$ & 
$f_{19}(z)=  q^{19} - 2q^{25} - 2q^{29} +\cdots$ \\
&$f_{21}(z)= q^{21} + q^{27} - q^{29} + \cdots$ &
$f_{23}(z)=  3q^{23} - 3q^{25} - 2q^{27} - 2q^{29} + \cdots$ 
\\ \noalign{\hrule height0.8pt}
\end{tabular}
}
\end{center}
\end{table}

\begin{table}[thb]
\caption{New modular form $h_{N}(z)$ in (\ref{eqn:Hecke})}
\label{Tab:N3}
\begin{center}
{\footnotesize
\begin{tabular}{c|l}
\noalign{\hrule height0.8pt}
Case & \multicolumn{1}{c}{$h_{N}(z)$} \\
\hline
(b) & $\frac{4}{3}
{(}
\frac{\eta(4z)^8}{\eta(2z)^4}
-\frac{\eta(28z)^8}{\eta(14z)^4}
-7f_1(z)
{)}$ \\
\hline
(c)  & $\frac{4}{21}
{(}
\frac{\eta(4z)^8}{\eta(2z)^4}
-8\frac{\eta(28z)^8}{\eta(14z)^4}
+49\frac{\eta(196z)^8}{\eta(98z)^4}
-f_1(z)
-4f_3(z)
+15f_5(z)
-13f_9(z)$ \\ 
& $-12f_{11}(z)
+7f_{13}(z)
+18f_{15}(z)
+3f_{17}(z)
+11f_{25}(z)
{)}
$ \\ 
\hline
(d)  & $\frac{4}{15}
{(}
\frac{\eta(4z)^8}{\eta(2z)^4}
+4\frac{\eta(20z)^8}{\eta(10z)^4}
+25\frac{\eta(100z)^8}{\eta(50z)^4}
-f_1(z)
-4f_3(z)
+5f_5(z)
+7f_7(z)
+2f_9(z)
{)}
$\\
\hline 
(e) &  $\frac{2}{3}
{(}
\frac{\eta(4z)^8}{\eta(2z)^4}
-3\frac{\eta(12z)^8}{\eta(6z)^4}
+5\frac{\eta(20z)^8}{\eta(10z)^4}
-15\frac{\eta(60z)^8}{\eta(30z)^4}
-f_1(z)
-f_3(z)
+f_5(z)
+4f_7(z)
{)}
$ \\
\hline
(f)  & $\frac{4}{21}
{(}
\frac{\eta(4z)^8}{\eta(2z)^4}
-8\frac{\eta(28z)^8}{\eta(14z)^4}
+49\frac{\eta(196z)^8}{\eta(98z)^4}
-f_1(z)
-4f_3(z)
-6f_5(z)
+8f_9(z)$ \\
& $+9f_{11}(z)
+7f_{13}(z)
-3f_{15}(z)
+3f_{17}(z)
-10f_{25}(z)
{)}
$ \\
\hline
(g) &  $\frac{4}{9}
{(}
\frac{\eta(4z)^8}{\eta(2z)^4}
-19\frac{\eta(76z)^8}{\eta(38z)^4}
-f_1(z)
-4f_3(z)
+3f_5(z)
+f_7(z)
{)}
$ \\
\hline
(h)  & 
$\frac{2}{7}
{(}
\frac{\eta(4z)^8}{\eta(2z)^4}
-3\frac{\eta(12z)^8}{\eta(6z)^4}
+13\frac{\eta(52z)^8}{\eta(26z)^4}
-39\frac{\eta(156z)^8}{\eta(78z)^4}
-f_1(z)
-f_3(z)
+8f_5(z)
-8f_7(z)$\\
&$-f_9(z)
+2f_{11}(z)
+f_{13}(z)
+8f_{15}(z)
-18f_{17}(z)
+8f_{19}(z)
-8f_{21}(z)
+3f_{23}(z)
{)}
$ \\ 
\noalign{\hrule height0.8pt}
\end{tabular}
}
\end{center}
\end{table}

Let 
$\eta(z)=q^{\frac{1}{24}}\prod_{n=1}^{\infty}(1-q^n)$
be the Dedekind $\eta$-function. 
Then 
\[
\frac{\eta(4z)^8}{\eta(2z)^4}=\sum_{n=1}^{\infty}\sigma_1(2n-1)q^{2n-1} 
\]
is a modular form for $\Gamma_0(4)$, where 
$\sigma_1(n)=\sum_{m|n}m$ (see~\cite[p.~145, Problem~10]{Kob}). 
We define a new modular form $h_{100}(z)$ for $\Gamma_0(100)$ as follows 
(see Table \ref{Tab:N3} for the other cases): 
\begin{multline}\label{eqn:Hecke}
h_{100}(z)=\frac{4}{15}\Bigg{(}
\frac{\eta(4z)^8}{\eta(2z)^4}
+4\frac{\eta(20z)^8}{\eta(10z)^4}
+25\frac{\eta(100z)^8}{\eta(50z)^4}
-f_1(z)
+11f_3(z)\\
-10f_5(z)
-8f_7(z)
+2f_9(z)
\Bigg{)}
=\sum_{n=0}^{\infty}b(n)q^n\ (\text{say}).
\end{multline}
Note that 
all the degrees of $\eta(4nz)^8/\eta(2nz)^4$ 
are divided by $n$, namely, $q^n+4q^{2n}+\cdots$. 
Hence, 
$b(p)=\frac{4}{15}(\sigma_1(p)-c_{f_1}(p)+11c_{f_3}(p)
-10c_{f_5}(p)
-8c_{f_7}(p)
+2c_{f_9}(p))
$ 
for each odd prime $p$ with $p\neq 5$, 
noting that $b(5)=0$.

Let
\[
\chi_2(n)=
\left\{
\begin{array}{cl}
0 & \text{ if }n\equiv 0\pmod{2}\\
1 & \text{ otherwise. }
\end{array}
\right.
\]
Then
\begin{align*}
(\theta_{L_{1,25,3}}(z))_{\chi_2}&=\sum_{n=0}^{\infty}\chi_2(n)a(n)q^n
= 4 q^3 + 4 q^9 + 4 q^{11} + \cdots 
\text{ and} \\
(h_{100}(z))_{\chi_2}&=\sum_{n=0}^{\infty}\chi_2(n)b(n)q^n
= 4 q^3 + 4 q^9 + 4 q^{11} +  \cdots 
\end{align*}
are modular forms with character $\chi_2$
for $\Gamma_0(400)$~{\cite[p.~127, Proposition~17]{Kob}}.
Using Theorem~7 in~\cite{Murty} and the fact that 
the genus of $\Gamma_0(400)$ is $43$ 
(see Table \ref{Tab:N1} for the other cases), 
the verification by {\sc Magma} that 
$\chi_2(n)a(n)=\chi_2(n)b(n)$ for $n\leq 43\times 2$ 
shows
\[
(\theta_{L_{1,25,3}}(z))_{\chi_2}=(h_{100}(z))_{\chi_2}. 
\]
Hence, for each odd prime $p$ with $p\ne 5$, we have
\begin{align}\label{eqn:coef56}
a(p)=\frac{4}{15}(p+1-(c_{f_1}(p)-11c_{f_3}(p)
+10c_{f_5}(p) +8c_{f_7}(p)-2c_{f_9}(p))),
\end{align}
(see Table \ref{Tab:N4} for the other cases and
the following paragraph is unnecessary
for the other cases).

Now take the unique normalized cusp form 
$f^{\prime}_5(z)=\sum_{n=1}^{\infty}c_{f^{\prime}_5}(n)q^n 
\in S_2(\Gamma_0(20))$.
The verification by {\sc Magma} that 
$c_{f_5}(5n)=c_{f^{\prime}_5}(n)$ for $n\leq 43\times 2$
shows that $f_5(z)=f^{\prime}_5(5z)$. 
Thus, for each prime $p$ with $p\neq 5$, $c_{f_5}(p)=0$. 
Hence, for each odd prime $p$ with $p\neq 5$, we have 
\begin{align}\label{eqn:coef56-2}
a(p)=\frac{4}{15}(p+1-(c_{f_1}(p)-11c_{f_3}(p)
+8c_{f_7}(p)-2c_{f_9}(p))).
\end{align}


\begin{table}[thbp]
\caption{$a(p)$ in (\ref{eqn:coef56})}
\label{Tab:N4}
\begin{center}
{\footnotesize
\begin{tabular}{c|l}
\noalign{\hrule height0.8pt}
Case &\multicolumn{1}{c}{ $a(p)$} \\
\hline
(b) & $\frac{4}{3}(p+1-(c_{f_1}(p)))
$\\
\hline
(c) & $\frac{4}{21}(p+1-(c_{f_1}(p)+4c_{f_3}(p)-15c_{f_5}(p) 
+13c_{f_9}(p)+12c_{f_{11}}(p)$ \\
& $-7c_{f_{13}}(p)
-18c_{f_{15}}(p)-3c_{f_{17}}(p)-11c_{f_{25}}(p))) 
$ \\ 
\hline
(d) & $\frac{4}{15}(p+1-(c_{f_1}(p)+4c_{f_3}(p)
-5c_{f_5}(p) -7c_{f_7}(p)-2c_{f_9}(p))
$\\ 
\hline 
(e) & $\frac{2}{3}(p+1-(c_{f_1}(p)+c_{f_3}(p)-c_{f_5}(p) -4c_{f_7}(p))) 
$\\
\hline
(f) & $\frac{4}{21}(p+1-(c_{f_1}(p)+4c_{f_3}(p)+6c_{f_5}(p) 
-8c_{f_9}(p)-9c_{f_{11}}(p)$ \\
& $-7c_{f_{13}}(p)
+3c_{f_{15}}(p)-3c_{f_{17}}(p)+10c_{f_{25}}(p))) 
$\\
\hline
(g) & $\frac{4}{9}(p+1-(c_{f_1}(p)+4c_{f_3}(p)-3c_{f_5}(p) -c_{f_7}(p)))
$\\
\hline
(h) & 
$\frac{2}{7}(p+1-(c_{f_1}(p)+c_{f_3}(p)
-8c_{f_5}(p) +8c_{f_7}(p)+c_{f_9}(p)-2c_{f_{11}}(p)$ \\
&$-c_{f_{13}}(p)-8c_{f_{15}}(p)+18c_{f_{17}}(p)
-8c_{f_{19}}(p)
+8c_{f_{21}}(p)
-3c_{f_{23}}(p)) 
$ \\
\noalign{\hrule height0.8pt}
\end{tabular}
}
\end{center}
\end{table}

\begin{landscape}
\begin{table}[thbp]
\caption{Matrices in (\ref{mat:Hecke})}
\label{Tab:N5}
\begin{center}
{\tiny
\begin{tabular}{c|l}
\noalign{\hrule height0.8pt}
Case  &  \multicolumn{1}{c}{Matrices}\\
\hline
(b) & $\begin{pmatrix}
1
\end{pmatrix}
$\\
\hline
(c) & $\begin{pmatrix}
 1 & -2 & 0 & \frac{1}{2} \left(1-\sqrt{29}\right) & 1 & 0 & -4 & 0 & 6 & -5 \\
 1 & -2 & 0 & \frac{1}{2} \left(1+\sqrt{29}\right) & 1 & 0 & -4 & 0 & 6 & -5 \\
 1 & 0 & 0 & 0 & -3 & 4 & 0 & 0 & 0 & 5 \\
 1 & -1 & -3 & 0 & -2 & -3 & -2 & 3 & -3 & 4 \\
 1 & 1 & 3 & 0 & -2 & -3 & 2 & 3 & 3 & 4 \\
 1 & -\sqrt{2} & 2 \sqrt{2} & 0 & -1 & -2 & 0 & -4 & -\sqrt{2} & 3 \\
 1 & \sqrt{2} & -2 \sqrt{2} & 0 & -1 & -2 & 0 & -4 & \sqrt{2} & 3 \\
 1 & 2 & 0 & 0 & 1 & 0 & 4 & 0 & -6 & -5 \\
 1 & 2 \sqrt{2} & -\sqrt{2} & 0 & 5 & 4 & -3 \sqrt{2} & -4 & -\sqrt{2} & -3 \\
 1 & -2 \sqrt{2} & \sqrt{2} & 0 & 5 & 4 & 3 \sqrt{2} & -4 & \sqrt{2} & -3
\end{pmatrix}
$\\
\hline
(d) & $\begin{pmatrix}
 1 & -1  & -2 & -2 \\
 1 & 1  & 2 & -2 \\
 1 & 2  & -2 & 1 \\
 1 & -2  & 2 & 1 
\end{pmatrix}
$\\
\hline
(e) & $\begin{pmatrix}
 1 & 1 & -1 & -4  \\
 1 & -1 & 1 & 0 \\
 1 & -1-\sqrt{-2} & -1 & 2 \\
 1 & -1+\sqrt{-2} & -1 & 2 
\end{pmatrix}
$\\
\hline
(f) & $\begin{pmatrix}
 1 & -2 & 0 & \frac{1}{2} \left(1-\sqrt{29}\right) & 1 & 0 & -4 & 0 & 6 & -5 \\
 1 & -2 & 0 & \frac{1}{2} \left(1+\sqrt{29}\right) & 1 & 0 & -4 & 0 & 6 & -5 \\
 1 & 0 & 0 & 0 & -3 & 4 & 0 & 0 & 0 & 5 \\
 1 & -1 & -3 & 0 & -2 & -3 & -2 & 3 & -3 & 4 \\
 1 & 1 & 3 & 0 & -2 & -3 & 2 & 3 & 3 & 4 \\
 1 & -\sqrt{2} & 2 \sqrt{2} & 0 & -1 & -2 & 0 & -4 & -\sqrt{2} & 3 \\
 1 & \sqrt{2} & -2 \sqrt{2} & 0 & -1 & -2 & 0 & -4 & \sqrt{2} & 3 \\
 1 & 2 & 0 & 0 & 1 & 0 & 4 & 0 & -6 & -5 \\
 1 & 2 \sqrt{2} & -\sqrt{2} & 0 & 5 & 4 & -3 \sqrt{2} & -4 & -\sqrt{2} & -3 \\
 1 & -2 \sqrt{2} & \sqrt{2} & 0 & 5 & 4 & 3 \sqrt{2} & -4 & \sqrt{2} & -3
\end{pmatrix}
$\\
\hline
(g) & $\begin{pmatrix}
 1 & -1 & -4 & 3  \\
 1 & 2 & -1 & -3 \\
 1 & 1 & 0 & -1 \\
 1 & -2 & 3 & -1 
\end{pmatrix}
$\\
\hline
(h) & $\left(
\begin{array}{cccccccccccc}
 1 & -1 & 2 & 4 & 1 & -4 & 1 & -2 & 2 & -8 & -4 & -1 \\
 1 & -1 & 2 & -4 & 1 & 4 & 1 & -2 & 2 & 0 & 4 & -1 \\
 1 & -1 & -4 & -2 & 1 & -4 & 1 & 4 & 2 & -2 & 2 & -1 \\
 1 & 1 & 0 & 2 & 1 & 0 & 1 & 0 & -6 & 2 & 2 & 1 \\
 1 & -\sqrt{-3} & 2 & -2 & -3 & -2 & -1 & -2 \sqrt{-3} & 6 & -6 & 2  \sqrt{-3} & 2+ \sqrt{-3} \\
 1 &  \sqrt{-3} & 2 & -2 & -3 & -2 & -1 & 2 \sqrt{-3} & 6 & -6 & -2  \sqrt{-3} & 2- \sqrt{-3} \\
 1 & -\frac{1}{2}  \left(3 +\sqrt{-3}\right) & -1 & 1 & \frac{3}{2} \left(1+ \sqrt{-3}\right) & -2 & -1 & \frac{1}{2} \left(3+ \sqrt{-3}\right) & -3 & 6 & -\frac{1}{2}  \left(3 +\sqrt{-3}\right) & -1- \sqrt{-3} \\
 1 & \frac{1}{2}  \left(-3 +\sqrt{-3}\right) & -1 & 1 & \frac{3}{2} \left(1- \sqrt{-3}\right) & -2 & -1 & \frac{1}{2} \left(3- \sqrt{-3}\right) & -3 & 6 & \frac{1}{2}  \left(-3 +\sqrt{-3}\right) & -1+ \sqrt{-3} \\
 1 & \frac{1}{2} \left(1- \sqrt{-11}\right) & -3 & -1 & -\frac{1}{2}  \left(5 +\sqrt{-11}\right) & 6 & 1 & \frac{3}{2}  \left(-1+\sqrt{-11}\right) & -3 & 2 & \frac{1}{2}  \left(-1+\sqrt{-11}\right) & 4 \\
 1 & \frac{1}{2} \left(1+ \sqrt{-11}\right) & -3 & -1 & \frac{1}{2}  \left(-5 +\sqrt{-11}\right) & 6 & 1 & -\frac{3}{2} \left(1+ \sqrt{-11}\right) & -3 & 2 & -\frac{1}{2}  \left(1+\sqrt{-11}\right) & 4 \\
 1 & 1 & -2 \sqrt{2} & 2 \sqrt{2} & 1 & -2 & -1 & -2 \sqrt{2} & 2+4 \sqrt{2} & -2 \sqrt{2} & 2 \sqrt{2} & -3 \\
 1 & 1 & 2 \sqrt{2} & -2 \sqrt{2} & 1 & -2 & -1 & 2 \sqrt{2} & 2-4 \sqrt{2} & 2 \sqrt{2} & -2 \sqrt{2} & -3
\end{array}
\right)
$\\
\noalign{\hrule height0.8pt}
\end{tabular}
}
\end{center}
\end{table}
\end{landscape}

Set $\hat{h}_i(z)\ (i=1,3,5,7,9)$ as follows 
(see Table \ref{Tab:N5} for the other cases): 
\begin{align}\label{mat:Hecke}
\begin{pmatrix}
\hat{h}_1(z)\\
\hat{h}_3(z)\\
\hat{h}_7(z)\\
\hat{h}_9(z)
\end{pmatrix}
&=
\begin{pmatrix}
 1 & -1  & -2 & -2 \\
 1 & 1  & 2 & -2 \\
 1 & 2  & -2 & 1 \\
 1 & -2  & 2 & 1 
\end{pmatrix}
\begin{pmatrix}
f_1(z)\\
f_3(z)\\
f_7(z)\\
f_9(z)
\end{pmatrix}.
\end{align}
For $i=1,3,7,9$, we denote by $c_{\hat{h}_{i}}(n)$ 
the coefficient of $\hat{h}_{i}(z)$ as follows: 
\[
\hat{h}_{i}(z)=\sum_{n=1}^{\infty}c_{\hat{h}_{i}}(n)q^n.
\]
Let $T(n)$ be the Hecke operator considered on the space 
of modular forms for $\Gamma_0(100)$ 
(see {\cite[p.~161, Proposition~37]{Kob}}). 
Then, 
$\hat{h}_i(z)\ (i=1,3,5,7,9)$ are eigen forms for 
$T(3)$. Since the algebra of Hecke operators is 
commutative~\cite[Theorem~4.5.3]{Miyake}, 
$\hat{h}_i(z)\ (i=1,3,7,9)$ are normalized Hecke eigen forms. 
In addition, for each prime $p$ and $i=1,3,7,9$, 
\[
|c_{\hat{h}_i}(p)|\leq 2\sqrt{p} 
\]
(see~\cite[p.~164]{Kob}). 

\begin{table}[tbhp]
\caption{(\ref{eqn:bound56}) and $a(p)>0$}
\label{Tab:N6}
\begin{center}
{\small
\begin{tabular}{c|l|c}
\noalign{\hrule height0.8pt}
Case  & \multicolumn{1}{c|}{(\ref{eqn:bound56})} & $a(p)>0$ \\
\hline
(b) & $\frac{4}{3}\left(p+1-2\sqrt{p}\right)$ & $p > 0$ \\
\hline
(c) & $\frac{4}{21}\left(p+1-(8+4\sqrt{2})\sqrt{p}\right)$ & $p>181$ \\
\hline
(d) & $\frac{4}{15}\left(p+1-7\sqrt{p}\right)$ & $p>43$ \\
\hline
(e) & $\frac{2}{3}\left(p+1-2\sqrt{p}\right)$ & $p>0$ \\
\hline
(f) & $\frac{4}{21}\left(p+1-(2+6\sqrt{2})\sqrt{p}\right)$ & $p>107$ \\
\hline
(g) & $\frac{4}{9}\left(p+1-6\sqrt{p}\right)$ & $p>31$ \\
\hline
(h) & $\frac{2}{7}\left(p+1-4\sqrt{2}\sqrt{p}\right)$ & $p>29$ \\
\noalign{\hrule height0.8pt}
\end{tabular}
}
\end{center}
\end{table}

By (\ref{mat:Hecke}), we have 
\begin{align*}
\begin{pmatrix}
f_1(z)\\
f_3(z)\\
f_7(z)\\
f_9(z)
\end{pmatrix}
=
\begin{pmatrix}
 \frac{1}{6} & \frac{1}{6} & \frac{1}{3} & \frac{1}{3}  \\
 -\frac{1}{6} & \frac{1}{6} & \frac{1}{6} & -\frac{1}{6}  \\
 -\frac{1}{6} & \frac{1}{6} & -\frac{1}{12} & \frac{1}{12}  \\
 -\frac{1}{6} & -\frac{1}{6} & \frac{1}{6} & \frac{1}{6} 
\end{pmatrix}
\begin{pmatrix}
\hat{h}_1(z)\\
\hat{h}_3(z)\\
\hat{h}_7(z)\\
\hat{h}_9(z)
\end{pmatrix}.
\end{align*}
Hence, we have
\begin{align*}
{f_1}(z)-11{f_3}(z)
+8{f_7}(z)-2{f_9}(z)
=
 \hat{h}_1(z)
- \frac{5}{2}\hat{h}_7(z)
+ \frac{5}{2}\hat{h}_9(z). 
\end{align*}
For each odd prime $p$ with $p\neq 5$, 
$|c_{f_1}(p)-11c_{f_3}(p)+8c_{f_7}(p)-2c_{f_9}(p)|$ 
is bounded above by 
\[
\left(1+\frac{5}{2}+\frac{5}{2}\right) 2\sqrt{p}
=12\sqrt{p}.
\]
Using (\ref{eqn:coef56-2})
((\ref{eqn:coef56}) for the other cases), $a(p)$ is bounded below by 
\begin{align}\label{eqn:bound56}
\frac{4}{15}\left(p+1-12\sqrt{p}\right), 
\end{align}
(see Table \ref{Tab:N6} for the other cases). 
Hence, (\ref{eqn:bound56}) is positive for $p >139$, 
namely, $a(p)>0$ for $p >139$
(see Table \ref{Tab:N6} for the other cases). 
We have verified by {\sc Magma} that
$a(p)>0$
for each prime $p$ with $p \le 139$ and $p \ne 2,5,7,13,23$.
This completes the proof of Case (a). 
\end{proof}

\section{Construction of $m$-frames in some unimodular lattices}
\label{sec:frame}

In this section, we provide a method for
constructing $m$-frames in unimodular lattices, which
are constructed from some self-dual $\ZZ_k$-codes by Construction A.
Combined Theorem~\ref{thm:prime} with the method, 
we construct $m$-frames in 
some extremal (optimal) odd unimodular lattices.

The following method is a generalization of 
Propositions 3.3 and 3.6 in \cite{HM12}.
Also, the cases 
$(k,m,\ell)=(4,11,2)$ and $(4,11,0)$ of the following method
can be found in \cite{Chapman} and \cite{Miezaki},
respectively.

\begin{prop}\label{prop:const}
Let $k$ be a positive integer with $k \ge 2$, and
let $\ell$ be a nonnegative integer with $\ell \le k-1$.
Let $M$ be an $n \times n$ 
matrix over $\ZZ$ satisfying 
\begin{equation}\label{eq:condition}
M^T=-M \text{ and } M M^T= mI_n,
\end{equation}
where $m+\ell^2 \equiv -1 \pmod{k}$.
Let $C_{2n,k}(M)$ be the self-dual $\ZZ_{k}$-code of length $2n$
with generator matrix
$\left(\begin{array}{cc}
I_n & M+ \ell I_n
\end{array}\right)$, where 
the entries of the matrix are regarded as elements of $\ZZ_{k}$.
Let $a,b,c$ and $d$ be integers with 
$b \equiv c-\ell d \pmod {k}$ and 
$d \equiv a+\ell b \pmod {k}$.
Then the set of $2n$ rows of the following matrix 
\[
F(M)=
\frac{1}{\sqrt{k}}
\left(
\begin{array}{cc}
aI_n+bM & cI_n+dM \\
-cI_n+dM & aI_n-bM
\end{array}
\right)
\]
forms a $\frac{1}{k}(a^2+m b^2+c^2+m d^2)$-frame in 
the unimodular lattice $A_{k}(C_{2n,k}(M))$.
\end{prop}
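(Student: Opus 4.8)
The plan is to verify the two defining requirements of a frame separately: first that the $2n$ rows of $F(M)$ are pairwise orthogonal of the correct common norm, and second that each of these rows actually lies in the lattice $A_{k}(C_{2n,k}(M))$.

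For the norm and orthogonality part, I would write $F(M)=\frac{1}{\sqrt{k}}G$ with $G$ the integral block matrix displayed in the statement, so that the Gram matrix of the rows is $F(M)F(M)^T=\frac{1}{k}GG^T$. The one algebraic input needed is that $M^2=-mI_n$: indeed $M^T=-M$ together with $MM^T=mI_n$ gives $-M^2=mI_n$. Expanding $GG^T$ blockwise and substituting $M^2=-mI_n$, the two diagonal blocks both collapse to $(a^2+mb^2+c^2+md^2)I_n$, while the off-diagonal blocks cancel to $0$ because the cross terms in $M$ and $M^2$ pair off against the signs built into $G$. Hence $F(M)F(M)^T=\frac{1}{k}(a^2+mb^2+c^2+md^2)I_{2n}$, which is exactly the assertion that the rows form an orthogonal set each of norm $t:=\frac{1}{k}(a^2+mb^2+c^2+md^2)$.

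For the membership part, recall that $A_{k}(C)=\frac{1}{\sqrt{k}}\{\rho(C)+k\ZZ^{2n}\}$, so a row $r$ of $F(M)$ lies in the lattice exactly when the integral vector $\sqrt{k}\,r$, a row of $G$, reduces modulo $k$ to a codeword of $C_{2n,k}(M)$. Since the code has generator matrix $(I_n \mid M+\ell I_n)$, a vector $(u\mid v)\in\ZZ_k^{2n}$ is a codeword precisely when $v\equiv u(M+\ell I_n)\pmod{k}$. I would therefore take the two block rows of $G$ in turn and check this relation. For the top block $(aI_n+bM \mid cI_n+dM)$ I compute $(aI_n+bM)(M+\ell I_n)$, reduce $M^2$ to $-mI_n$, and obtain $(a\ell-bm)I_n+(a+b\ell)M$; matching the $I_n$-part and the $M$-part against $cI_n+dM$ yields the congruences $a\ell-bm\equiv c$ and $a+b\ell\equiv d\pmod{k}$. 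The bottom block $(-cI_n+dM \mid aI_n-bM)$ is handled identically and produces $-c\ell-dm\equiv a$ and $-c+d\ell\equiv -b\pmod{k}$.

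The remaining task, and the step I expect to require the most careful bookkeeping, is to show that these four congruences are exactly equivalent to the two hypotheses $b\equiv c-\ell d$ and $d\equiv a+\ell b\pmod{k}$, using the relation $m\equiv -1-\ell^2\pmod{k}$ coming from $m+\ell^2\equiv -1$. The two congruences involving only $\ell$, namely $a+b\ell\equiv d$ and $-c+d\ell\equiv -b$, are immediate restatements of the hypotheses. For the two congruences involving $m$ I would eliminate $m$ via $m\equiv -1-\ell^2$; for instance $a\ell-bm\equiv a\ell+b+b\ell^2=b+\ell(a+\ell b)\equiv b+\ell d$, and this equals $c$ precisely because $b\equiv c-\ell d$, while the same substitution disposes of $-c\ell-dm\equiv a$ using $d\equiv a+\ell b$. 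Once these identifications are in place, both block rows reduce mod $k$ to codewords, so all $2n$ rows of $F(M)$ lie in $A_{k}(C_{2n,k}(M))$, and together with the Gram-matrix computation this exhibits them as a $t$-frame. One may note in passing that the self-duality of $C_{2n,k}(M)$ itself follows from the same relation, since $(M+\ell I_n)(M+\ell I_n)^T=(m+\ell^2)I_n\equiv -I_n\pmod{k}$.
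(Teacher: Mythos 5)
Your proposal is correct and follows essentially the same two-step structure as the paper: verify that the rows of $\sqrt{k}\,F(M)$ reduce mod $k$ to codewords of $C_{2n,k}(M)$, then compute $F(M)F(M)^T=\frac{1}{k}(a^2+mb^2+c^2+md^2)I_{2n}$ using $M^2=-mI_n$. The only cosmetic difference is in the membership step, where the paper writes the two block rows as integer combinations $sG+tH$ of the generator matrices $G=(I_n \mid M+\ell I_n)$ and $H=(M-\ell I_n \mid I_n)$ with $s=d$, $t=b$, whereas you check the codeword condition $v\equiv u(M+\ell I_n)\pmod{k}$ directly; both reduce to the same congruences.
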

\begin{proof}
Since 
$M M^T= mI_n$ with $m+ \ell^2 \equiv -1 \pmod{k}$,
$C_{2n,k}(M)$ is a self-dual $\ZZ_{k}$-code of length $2n$.
Thus, $A_{k}(C_{2n,k}(M))$ is a unimodular lattice.
Since $C_{2n,k}(M)$ is self-dual and $M^T=-M$,
both $G=\left(\begin{array}{cc}
I_n & M+\ell I_n
\end{array}\right)$ 
and
$H=\left(\begin{array}{cc}
M-\ell I_n & I_n
\end{array}\right)$ are  generator matrices of $C_{2n,k}(M)$.

Let $s,t$ be integers.
Here, we regard the entries of the matrices $G,H$ as integers.
Then
\[
\left(\begin{array}{c}
sG+tH \\
-tG+sH
\end{array}\right)
=
\left(\begin{array}{cc}
(s-\ell t)I_n+tM & (\ell s+t)I_n+ sM \\
-(\ell s+t)I_n+sM & (s-\ell t)I_n -tM
\end{array}\right).
\]
By putting
\[
a=s-\ell t, b=t, c=\ell s+t, d=s,
\]
we have the form of $F(M)$.
Thus, 
if
$b \equiv c-\ell d \pmod {k}$ and 
$d \equiv a+\ell b \pmod {k}$ then
all rows of the matrix $F(M)$
are vectors of $A_{k}(C_{2n,k}(M))$.
Since $F(M) F(M)^T=\frac{1}{k}(a^2+mb^2+c^2+md^2)I_{2n}$,
the result follows.
\end{proof}

\begin{rem}
It follows from the assumption 
that 
$a^2+m b^2+c^2+m d^2 \equiv 0 \pmod k$.
\end{rem}

\begin{landscape}
\begin{table}[p]
\caption{Matrices $M$}
\label{Tab:D}
\begin{center}
{\small
\begin{tabular}{c|ccc|l|l}
\noalign{\hrule height0.8pt}
$M$ & $k$ & $m$ & $\ell$ 
 & \multicolumn{1}{c|}{$r_A$} &\multicolumn{1}{c}{$r_B$} \\
\hline
$D_{6 }$ & 3 & 25 & 1  & $(0,  2,  2)$ & $(0,  1, -4)$ \\
$P_{8 }$ & 4 &  7 & 2  && \\
$D_{10}$ & 3 & 25 & 1  & $(0,0,2,2,0)$&$(1,2,2,-2,2)$ \\
$D'_{10}$ & 3 & 25 & 1 & $(0,0,0,0,0)$&$(-3, -2,  2, -2,  2)$ \\
$D''_{10}$ & 5 & 49 &0 & $( 0, 0,3, 3,0)$&$(-2,-3,4,-1,1)$ \\
$D_{14}$ & 3 & 25 & 1  & $(0,2,1,0,0,1,2)$&$(-1,-2,1,-2,2,1,0)$ \\
$D'_{14}$ & 5 & 25 & 2 & $(0,0,2,-1,-1,2,0)$&$(-2,-1,-2,0,-1,-1,-2)$ \\
$D_{16}$ & 4 & 15 & 2  & $(0,1,1,0,1,0,1,1)$& $(1,1,1,-1,-1,2,-1,0)$ \\
$D_{18}$ & 6 & 49 & 2  & $(0,1,-3,0,2,2,0,-3,1)$ & $(-2,2,-1,2,1,2,1,1,1)$\\
$P_{20}$ & 4 & 19 & 0  & & \\
$D_{22}$ & 5 & 25 & 2  &  $(0,0,-1,1,0,0,0,0,1,-1,0)$
& $(1,0,-2,1,1,1,2,1,0,2,-2)$ \\
$D_{24}$ & 5 & 39 & 0  & $( 0, 1, 1, 1, 2,-1, 1,-1, 2, 1, 1, 1)$ &
$(-2,-1, 2,-1,-1,-2, 0, 1, 0, 2,-1,-1)$ \\
\noalign{\hrule height0.8pt}
\end{tabular}
}
\end{center}
\end{table}
\end{landscape}

The matrices $P_{p+1}$ $(p=7,19)$, 
which are given in Section \ref{subsec:M},  
satisfy the assumptions in Proposition \ref{prop:const},
for the integers $k,m$ and $\ell$ listed in Table \ref{Tab:D}.
Using the form (\ref{eq:M}), we have found more 
matrices $D_n$ $(n=6,10,14,16,18,22,24)$, $D'_{n}$ $(n=10,14)$
and $D''_{10}$
satisfying the assumptions in Proposition \ref{prop:const},
where the integers $k,m$ and $\ell$  and the 
first rows $r_A$ and $r_B$ of  negacirculant matrices
$A$ and $B$ are also listed in Table \ref{Tab:D}.

By Proposition \ref{prop:const},
for matrices $M$ given in Table \ref{Tab:D},
the odd unimodular lattice
$A_k(C_{2n,k}(M))$, which is constructed from
the Type~I $\ZZ_k$-code $C_{2n,k}(M)$, contains a
$\frac{1}{k}(a^2+m b^2+c^2+m d^2)$-frame
for integers $a,b,c$ and $d$ with 
$b \equiv c-\ell d \pmod {k}$ and 
$d \equiv a+\ell b \pmod {k}$.
The minimum norms $\min(L)$ of 
the lattices $L=A_k(C_{2n,k}(M))$ listed in Table \ref{Tab:L},
which have been determined by {\sc Magma},
are also listed in the table.


\begin{lem}\label{lem:key}
Suppose that $L$ is any of 
the lattices listed in Table \ref{Tab:L}.
Then $L$ contains a $k$-frame
for a positive integer $k$ 
satisfying the conditions ($\star$)
listed in Table \ref{Tab:L},
where $m_i$ in ($\star$) is a non-negative integer.
\end{lem}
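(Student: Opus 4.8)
The plan is to assemble the lemma from three ingredients already established: Proposition~\ref{prop:const}, Theorem~\ref{thm:prime}, and Chapman's Lemma~\ref{lem:frame}. Each lattice $L$ in Table~\ref{Tab:L} has the form $A_{k_0}(C_{2n,k_0}(M))$ for one of the matrices $M$ of Table~\ref{Tab:D}, together with the parameters $k_0,m,\ell$ recorded there, and these matrices were selected precisely so that $M^T=-M$ and $MM^T=mI_n$ with $m+\ell^2\equiv -1\pmod{k_0}$; that is, $M$ satisfies hypothesis~(\ref{eq:condition}). The first step is therefore bookkeeping: for each row of Table~\ref{Tab:L}, match $L$ to its matrix $M$, its parameters $(k_0,m,\ell)$, and the corresponding case (a)--(h) of Theorem~\ref{thm:prime}, whose quadratic form $\frac{1}{k_0}(a^2+mb^2+c^2+md^2)$ and congruences $b\equiv c-\ell d$, $d\equiv a+\ell b \pmod{k_0}$ coincide with those appearing in Proposition~\ref{prop:const}.

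The second step produces $p$-frames for almost all primes. Fix $L$ and let $E$ be the finite exceptional set of primes in the relevant case of Theorem~\ref{thm:prime}. For every prime $p\notin E$ that theorem supplies integers $a,b,c,d$ obeying the two congruences with $p=\frac{1}{k_0}(a^2+mb^2+c^2+md^2)$. Feeding these into Proposition~\ref{prop:const} produces the matrix $F(M)$ whose $2n$ rows form a $p$-frame of $L$. Hence $L$ contains a $p$-frame for every prime $p\notin E$.

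The third step bootstraps to composite multipliers. Every lattice in Table~\ref{Tab:L} has dimension $2n$, where $n$ (the size of $M$, equal to the subscript of each $D$- or $P$-matrix) is even; thus the dimension is divisible by $4$ and Lemma~\ref{lem:frame} applies. From a $p$-frame it yields a $pm'$-frame for every positive integer $m'\ge 1$. Consequently $L$ contains a $k$-frame for every positive integer $k$ divisible by at least one prime outside $E$, and this is exactly the set of $k$ singled out by condition~$(\star)$ in Table~\ref{Tab:L}.

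The case analysis is routine once the dictionary between Table~\ref{Tab:L}, Table~\ref{Tab:D}, and the cases of Theorem~\ref{thm:prime} is in place. The one point demanding care is verifying that the $k$ excluded by $(\star)$—those written in the parametrized form using the non-negative integers $m_i$—are precisely the positive integers all of whose prime factors lie in the exceptional set $E$, so that the multipliers $pm'$ obtained above sweep out exactly the admissible $k$ and no more. I expect this translation between ``$p\notin E$'' and the form of $(\star)$, rather than any new mathematics, to be the main obstacle, since the substantive analytic work is already carried by Theorem~\ref{thm:prime}.
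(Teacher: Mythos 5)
Your proposal is correct and follows essentially the same route as the paper: combine Proposition~\ref{prop:const} (applied to the matrix $M$ and parameters $(k,m,\ell)$ from Table~\ref{Tab:D}) with the representability result of Theorem~\ref{thm:prime} to get a $p$-frame for every prime $p$ outside the exceptional set, then invoke Lemma~\ref{lem:frame} (valid since each dimension $2n$ here is divisible by $4$) to pass to $pm'$-frames, which is exactly how the paper argues for its one worked example $A_3(C_{12,3}(D_6))$ before declaring the other rows analogous. Your closing observation — that the $k$ excluded by $(\star)$ are precisely those whose prime factors all lie in the exceptional set — is the correct (and easily verified) translation, so no gap remains.
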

\begin{proof}
All cases are similar, and we only give details for the
lattice $A_3(C_{12,3}(D_6))$.
Let $a,b,c$ and $d$ be integers with 
$b \equiv c-d \pmod 3$ and
$d \equiv a+b \pmod 3$.
By Proposition \ref{prop:const},
$A_3(C_{12,3}(D_6))$ contains a
$\frac{1}{3}(a^2+25b^2+c^2+25d^2)$-frame.
By Theorem \ref{thm:prime} (a),
there are integers $a,b,c$ and $d$ satisfying
$b \equiv c-d \pmod 3$,
$d \equiv a+b \pmod 3$ and 
$p=\frac{1}{3}(a^2+25b^2+c^2+25d^2)$ for each prime $p \ne 2, 5, 7, 13, 23$.
The result follows from Lemma \ref{lem:frame}.
For the other lattices, Table \ref{Tab:L} lists
the cases of Theorem \ref{thm:prime}, which are used in the proof.
\end{proof}

\begin{table}[thbp]
\caption{Some unimodular lattices}
\label{Tab:L}
\begin{center}
{\small
\begin{tabular}{c|c|l|c}
\noalign{\hrule height0.8pt}
$L$ & $\min(L)$ & \multicolumn{1}{c|}{Conditions ($\star$)} 
&  Cases\\
\hline
$A_3(C_{12,3}(D_6))$ &2 &
$k \ge 2$, $k\ne 2^{m_1} 5^{m_2} 7^{m_3}13^{m_4}23^{m_5}$ & (a) \\
$A_4(C_{16,4}(P_{8}))$  &2 \cite{Z4-H40} &
$k \ge 2$, $k\ne 2^{m_1}7^{m_2}$ & (b)\\
$A_3(C_{20,3}(D_{10}))$ &2 &
$k \ge 2$, $k\ne 2^{m_1} 5^{m_2} 7^{m_3}13^{m_4}23^{m_5}$   & (a) \\
$A_3(C_{20,3}(D'_{10}))$ &2 &
$k \ge 2$, $k\ne 2^{m_1} 5^{m_2} 7^{m_3}13^{m_4}23^{m_5}$   & (a) \\
$A_5(C_{20,5}(D''_{10}))$ &2 &
$k \ge 2$, $k\ne 2^{m_1} 3^{m_2} 7^{m_3} 11^{m_4} 19^{m_5} 29^{m_6}$ &(c) \\
$A_3(C_{28,3}(D_{14}))$ &3 &
$k \ge 3$, $k\ne 2^{m_1} 5^{m_2} 7^{m_3}13^{m_4}23^{m_5}$ & (a)  \\
$A_5(C_{28,5}(D'_{14}))$ &3 &
$k \ge 3$, $k\ne 2^{m_1} 3^{m_2} 17^{m_3}$ & (d)\\ 
$A_4(C_{32,4}(D_{16}))$ &4 &
$k \ge 4$, $k\ne 2^{m_1} 3^{m_2}$  & (e) \\
$A_6(C_{36,6}(D_{18}))$ &4 &
$k \ge 4$, $k\ne 2^{m_1} 3^{m_2} 5^{m_3}7^{m_4}$ & (f) \\
$A_4(C_{40,4}(P_{20}))$ &4 \cite{Z4-H40} &
$k \ge 4$, $k\ne 2^{m_1}3^{m_2} 13^{m_3} 19^{m_4}$ & (g)\\
$A_5(C_{44,5}(D_{22}))$ &4 &
$k \ge 4$, $k\ne 2^{m_1} 3^{m_2} 17^{m_3}$ & (d)\\ 
$A_5(C_{48,5}(D_{24}))$ &5 &
$k \ge 5$, $k\ne 2^{m_1} 3^{m_2} 7^{m_3} 17^{m_4}$ & (h)\\ 
\noalign{\hrule height0.8pt}
\end{tabular}
}
\end{center}
\end{table}

\section{Frames of some extremal odd unimodular lattices
and extremal Type~I $\ZZ_k$-codes}
\label{sec:main}

In this section, we establish the existence of
a $k$-frame in some extremal (optimal) unimodular lattices 
for every positive integer $k$ with $k \ge \min(L)$.
These results yield the existence of 
an extremal Type~I $\ZZ_{k}$-code of lengths 
$n=12,16,20,32,36,40, 44$ 
and a near-extremal Type~I $\ZZ_k$-code of
length $n=28$ for a positive integer $k$,
where $k \ne 1,3$ if $n =32$ and $k \ne 1$ otherwise.

\subsection{Frames of $D_{12}^+$ and Length 12}

There is a unique extremal odd unimodular lattice
in dimension $12$, up to isomorphism
(see \cite[Table 16.7]{SPLAG}),
where the lattice is denoted by $D_{12}^+$.
There is a unique binary extremal Type~I code of length $12$,
up to equivalence \cite{Pless72b},
where the code is denoted by $B_{12}$ in \cite[Table 2]{Pless72b}.
It is known that $D_{12}^+$ is constructed as $A_2(B_{12})$.
Hence, by Lemma \ref{lem:key}, we investigate the existence of
a $k$-frame in $D_{12}^+$ for $k=5,7,13,23$.

There are 16 inequivalent Type~I $\ZZ_5$-codes of 
length $12$ \cite{LPS-GF5}.
We have verified by {\sc Magma} that
the $i$th code in \cite[Table III]{LPS-GF5} gives
$D_{12}^+$ by Construction A ($i= 8, 11, 13, 16$).
There are 64 inequivalent Type~I $\ZZ_7$-codes of 
length $12$ \cite{HO02}.
We have verified by {\sc Magma} that
the code $C_{12,i}$ in \cite[Table 1]{HO02} gives
$D_{12}^+$ by Construction A
($i=11$, $12$, $15$, $17$, $20$, $38$, $42$, $43$, $47$, $49$, $51$, 
$54$, $55$, $57, \ldots, 62$).
For $k=13$ and $23$,
let $C_{k,12}$ be the $\ZZ_k$-code with generator 
matrix of the form (\ref{eq:GM}),
where the first rows $r_A$ and $r_B$ of $A$ and $B$
are as follows:
\[
(r_A,r_B)=
((0,  1,  6),(  2,  3,  1)) \text{ and }
((0,  1, 18),(  7,  4,  0)),
\]
respectively.
Since $AA^T+BB^T=-I_{3}$,
these codes are Type~I.
Moreover, we have verified 
by {\sc Magma} that 
$A_k(C_{k,12})$ ($k=13,23$) is isomorphic to $D_{12}^+$.
Hence, combined with Lemma \ref{lem:key}, we have the following:

\begin{thm}\label{thm:D12}
$D_{12}^+$ contains a $k$-frame
if and only if $k$ is a positive integer with $k \ge 2$.
\end{thm}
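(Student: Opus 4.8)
The plan is to establish both directions of the equivalence in Theorem~\ref{thm:D12}. One direction is immediate: since $D_{12}^+$ is an odd unimodular lattice, it contains no nonzero vector of norm $1$, so it cannot contain a $1$-frame (a set of orthogonal vectors each of norm $1$). Thus any $k$-frame forces $k \ge 2$. The substance of the theorem is the converse: for every positive integer $k \ge 2$, the lattice $D_{12}^+$ contains a $k$-frame.

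For the converse I would split the positive integers $k \ge 2$ according to the sieve provided by Lemma~\ref{lem:key}. That lemma, applied to $L = A_3(C_{12,3}(D_6))$ together with case (a) of Theorem~\ref{thm:prime}, already guarantees that $D_{12}^+$ contains a $k$-frame for every $k \ge 2$ \emph{except} possibly those $k$ of the form $2^{m_1} 5^{m_2} 7^{m_3} 13^{m_4} 23^{m_5}$. Here I would use the fact, recorded in the excerpt, that $D_{12}^+ \cong A_2(B_{12}) \cong A_3(C_{12,3}(D_6))$, so the frames produced by Lemma~\ref{lem:key} genuinely live in $D_{12}^+$. It therefore remains only to handle the excluded moduli. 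By Lemma~\ref{lem:frame} (valid since $12 \equiv 0 \pmod 4$), if $D_{12}^+$ contains a $p$-frame then it contains a $pm$-frame for every positive integer $m$; hence it suffices to exhibit a $p$-frame for each of the prime bases $p = 2, 5, 7, 13, 23$, and then arbitrary products $2^{m_1}5^{m_2}7^{m_3}13^{m_4}23^{m_5}$ follow by iterating Lemma~\ref{lem:frame}.

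The remaining work is thus to produce a $p$-frame in $D_{12}^+$ for each $p \in \{2,5,7,13,23\}$, which is exactly what the computational paragraph preceding the theorem accomplishes. For $p=2$ the frame comes from $D_{12}^+ = A_2(B_{12})$ itself (the standard coordinate frame of a Construction~A lattice). For $p=5$ and $p=7$ I would cite the classifications of Type~I $\ZZ_5$- and $\ZZ_7$-codes of length $12$ and verify, via the equivalence ``$L$ has a $k$-frame iff $L \cong A_k(C)$ for some self-dual $\ZZ_k$-code $C$,'' that at least one code in each list yields $D_{12}^+$ under Construction~A (the codes indexed $i=8,11,13,16$ in \cite{LPS-GF5} and the specified $C_{12,i}$ in \cite{HO02}). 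For $p=13$ and $p=23$ I would instead directly construct a self-dual $\ZZ_p$-code $C_{p,12}$ from a generator matrix of the negacirculant form \eqref{eq:GM}, with the explicit first rows $r_A,r_B$ given, check self-duality via the condition $AA^T + BB^T = -I_3$, and confirm by machine that $A_p(C_{p,12}) \cong D_{12}^+$.

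The main obstacle is genuinely computational rather than conceptual: one must certify each isomorphism $A_p(C) \cong D_{12}^+$, and in dimension $12$ this is a lattice-isomorphism test that {\sc Magma} can settle (for instance via the minimal vectors and automorphism-group invariants, noting $D_{12}^+$ is the unique extremal odd unimodular lattice in dimension $12$, so it suffices to check that each $A_p(C)$ is odd, unimodular, and has minimum norm $2$). The uniqueness of the extremal lattice streamlines this step considerably, since it replaces a direct isomorphism search with a check of a few invariants. Assembling the five prime cases with Lemmas~\ref{lem:frame} and~\ref{lem:key} then covers every $k \ge 2$, completing the proof.
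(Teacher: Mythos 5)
Your proposal follows essentially the same route as the paper: Lemma~\ref{lem:key} applied to $A_3(C_{12,3}(D_6))\cong D_{12}^+$ handles every $k\ge 2$ outside the set $2^{m_1}5^{m_2}7^{m_3}13^{m_4}23^{m_5}$, and the excluded moduli are covered by exhibiting a $p$-frame for each $p\in\{2,5,7,13,23\}$ (via $A_2(B_{12})$, the classified Type~I $\ZZ_5$- and $\ZZ_7$-codes of length $12$, and the explicit negacirculant codes $C_{13,12}$ and $C_{23,12}$) and then invoking Lemma~\ref{lem:frame}. One small correction: the ``only if'' direction does not follow merely from $D_{12}^+$ being odd unimodular (the odd unimodular lattice $\ZZ^{12}$ contains many norm-$1$ vectors); the correct reason is that $\min(D_{12}^+)=2$, a fact you do record elsewhere in your argument.
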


Hence, there is a Type~I $\ZZ_k$-code $C$ with $A_k(C) \cong D_{12}^+$
for every positive integer $k$ with $k \ge 2$.
Since $D_{12}^+$ has minimum norm $2$, $C$ must be extremal.

\begin{cor}\label{cor:12}
There is an extremal Type I $\ZZ_{k}$-code of
length $12$ for every positive integer $k$ with $k \ge 2$.
\end{cor}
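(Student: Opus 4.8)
The plan is to derive Corollary~\ref{cor:12} directly from Theorem~\ref{thm:D12} together with the basic properties of Construction~A established in Section~\ref{sec:Pre}. The corollary is really a packaging result: Theorem~\ref{thm:D12} asserts the existence of a $k$-frame in $D_{12}^+$ for every integer $k \ge 2$, and the correspondence between frames and self-dual codes converts this into a statement about codes, while the extremality is forced by the minimum norm of the lattice.

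First I would invoke the equivalence recalled in Section~\ref{sec:Pre}: a unimodular lattice $L$ contains a $k$-frame if and only if there is a self-dual $\ZZ_k$-code $C$ with $A_k(C) \cong L$ (see \cite{HMV}). Applying this to $L = D_{12}^+$ and using Theorem~\ref{thm:D12}, for each positive integer $k \ge 2$ we obtain a self-dual $\ZZ_k$-code $C$ of length $12$ with $A_k(C) \cong D_{12}^+$.

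Next I would check extremality. Recall from Section~\ref{sec:Pre} that $\min(A_k(C)) = \min\{k, d_E(C)/k\}$. Since $D_{12}^+$ is the unique extremal odd unimodular lattice in dimension $12$, it has $\min(D_{12}^+) = 2$, so $\min\{k, d_E(C)/k\} = 2$. For $k \ge 2$ this forces $d_E(C)/k = 2$, i.e.\ $d_E(C) = 2k$; in particular $d_E(C) \le 2k$, which is exactly the bound $2k\lfloor 12/24\rfloor + 2k = 2k$ from the length-$n \le 48$ bound of Section~\ref{sec:Pre} (here $\lfloor 12/24\rfloor = 0$). Hence $C$ meets the bound with equality and is extremal. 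Finally, I would observe that $D_{12}^+$ is an \emph{odd} unimodular lattice, so by the Construction~A correspondence ($A_{2k}(C)$ is even iff $C$ is Type~II) the associated code cannot be Type~II, confirming that $C$ is a Type~I code as claimed.

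There is essentially no obstacle here: all the hard work has already been done in establishing Theorem~\ref{thm:D12}, whose proof combines the number-theoretic input of Theorem~\ref{thm:prime}(a), the frame construction of Proposition~\ref{prop:const}, Lemma~\ref{lem:key}, and the explicit \textsc{Magma} verifications for the finitely many exceptional $k \in \{5,7,13,23\}$. The only point requiring a moment's care is the extremality bookkeeping, namely confirming that $\lfloor 12/24\rfloor = 0$ so that the relevant bound reads $d_E(C) \le 2k$ and that the frame norm $k = 2$ matches $\min(D_{12}^+)$; both are immediate. Thus the corollary follows in a few lines.
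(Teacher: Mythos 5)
Your proposal is correct and follows essentially the same route as the paper: the paper deduces the corollary in two sentences from Theorem~\ref{thm:D12} via the frame--code correspondence, noting that $\min(D_{12}^+)=2$ forces any code $C$ with $A_k(C)\cong D_{12}^+$ to satisfy $d_E(C)=2k$ and hence to be extremal. Your additional bookkeeping (the bound $2k\lfloor 12/24\rfloor+2k=2k$ and the Type~I check) just makes explicit what the paper leaves implicit.
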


By Lemma \ref{lem:T}, we have the following:

\begin{cor}
Let $\sum_{m=0}^{\infty}A_{m}q^m$ denote the theta series
of $D_{12}^+$.
Then $A_{k} \ge 24$ for every positive integer $k$ with $k \ge 2$.
\end{cor}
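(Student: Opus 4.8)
The plan is to read the conclusion off directly from the two preceding results, since this corollary is an immediate specialization. First I would invoke Theorem~\ref{thm:D12}, which asserts that the extremal odd unimodular lattice $D_{12}^+$ contains a $k$-frame for every positive integer $k$ with $k \ge 2$. This is exactly the hypothesis required to apply Lemma~\ref{lem:T}. Then I would apply Lemma~\ref{lem:T} with $L = D_{12}^+$, which lives in dimension $n = 12$: since $D_{12}^+$ contains a $k$-frame, the coefficient $A_k$ of its theta series satisfies $A_k \ge 2n = 24$. As this holds for every $k \ge 2$, the stated inequality follows at once.

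For completeness I would note the reason Lemma~\ref{lem:T} delivers the bound, since that is the only substantive content. A $k$-frame is a set $\{f_1, \ldots, f_n\}$ of lattice vectors with $(f_i, f_j) = k\delta_{i,j}$; in particular each $f_i$ has norm $k$, and hence so does each $-f_i$. Because the $f_i$ are pairwise orthogonal and nonzero, the $2n$ vectors $\pm f_1, \ldots, \pm f_n$ are pairwise distinct, which produces at least $2n$ lattice vectors of norm $k$ and therefore gives $A_k \ge 2n$. Substituting $n = 12$ yields $A_k \ge 24$.

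There is essentially no obstacle to overcome here: all of the genuine work has already been carried out, both in Theorem~\ref{thm:D12} (the existence of $k$-frames of every admissible order, obtained via Construction~A from self-dual $\ZZ_k$-codes together with Lemma~\ref{lem:key} and the explicit code constructions for $k = 5, 7, 13, 23$) and in the elementary counting argument behind Lemma~\ref{lem:T}. The corollary is nothing more than the conjunction of these two statements evaluated at $n = 12$.
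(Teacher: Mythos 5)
Your proposal is correct and follows the paper's own route exactly: the paper derives this corollary by combining Theorem~\ref{thm:D12} with Lemma~\ref{lem:T}, just as you do. Your added justification of Lemma~\ref{lem:T} (the $2n$ distinct vectors $\pm f_1,\ldots,\pm f_n$ of norm $k$) is sound and merely fills in a step the paper leaves implicit.
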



\subsection{Frames of $D_8^2$ and Length 16}

There is a unique extremal odd unimodular lattice
in dimension $16$, up to isomorphism
(see \cite[Table 16.7]{SPLAG}),
where the lattice is denoted by $D_8^2$.
There is a unique binary extremal Type~I code of length $16$,
up to equivalence \cite{Pless72b},
where the code is denoted by $F_{16}$ in \cite[Table 2]{Pless72b}.
It is known that $D_8^2$ is constructed as $A_2(F_{16})$.
Hence, by Lemma \ref{lem:key}, we investigate the existence of
a $7$-frame in $D_8^2$.

Let $C_{7,16}$ be the $\ZZ_7$-code with generator 
matrix of the form (\ref{eq:GM}),
where the first rows $r_A$ and $r_B$ of $A$ and $B$
are $r_A=(0, 0, 1, 1 )$ and $r_B=(1, 3, 1, 0)$, respectively.
Since $AA^T+BB^T=-I_{4}$,
$C_{7,16}$ is Type~I.
We have verified by {\sc Magma} that
$A_7(C_{7,16})$ is isomorphic to $D_8^2$.
Hence, combined with Lemma \ref{lem:key}, we have the following:

\begin{thm}\label{thm:D82}
$D_8^2$ contains a $k$-frame
if and only if $k$ is a positive integer with $k \ge 2$.
\end{thm}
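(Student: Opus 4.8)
The plan is to follow the same strategy as for $D_{12}^+$ in Theorem \ref{thm:D12}: combine the infinite family of frames furnished by Lemma \ref{lem:key} with explicit constructions that dispose of the finitely many exceptional primes, and then invoke Chapman's Lemma \ref{lem:frame} to fill in all remaining $k$. The $(\star)$-condition for the relevant row of Table \ref{Tab:L} leaves only prime powers built from $2$ and $7$ to be handled separately.

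The ``only if'' direction is immediate: a $1$-frame would consist of $16$ pairwise orthogonal vectors of norm $1$, but $\min(D_8^2)=2$, so $D_8^2$ has no vector of norm $1$ and hence no $1$-frame; under the standing assumption this forces $k\ge 2$. For the ``if'' direction, recall from Lemma \ref{lem:key} (applied to the entry $A_4(C_{16,4}(P_8))$ of Table \ref{Tab:L}, which realizes $D_8^2$ and invokes Case (b) of Theorem \ref{thm:prime}) that $D_8^2$ contains a $k$-frame for every $k\ge 2$ with $k\ne 2^{m_1}7^{m_2}$. It therefore remains to treat $k$ of the form $2^{m_1}7^{m_2}$. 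Since $16\equiv 0\pmod 4$, Lemma \ref{lem:frame} reduces this to producing a $2$-frame and a $7$-frame: every such $k$ with $(m_1,m_2)\ne(0,0)$ is a multiple of $2$ or of $7$, so Lemma \ref{lem:frame} promotes the appropriate prime frame to the desired $k$-frame. The $2$-frame comes for free from the identification $D_8^2 = A_2(F_{16})$, where $F_{16}$ is the binary extremal Type~I code of length $16$.

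The crux is the single remaining prime $p=7$, the one exception of Case (b) not already absorbed by the binary code. Here I would construct the $\ZZ_7$-code $C_{7,16}$ with generator matrix of the form (\ref{eq:GM}) determined by the negacirculant first rows $r_A=(0,0,1,1)$ and $r_B=(1,3,1,0)$; the identity $AA^T+BB^T=-I_4$ guarantees self-duality, so $C_{7,16}$ is a Type~I $\ZZ_7$-code, and a direct {\sc Magma} computation verifies $A_7(C_{7,16})\cong D_8^2$, exhibiting the required $7$-frame. Assembling the $2$-frame, the $7$-frame, and Lemma \ref{lem:key} via Lemma \ref{lem:frame} then yields a $k$-frame for every $k\ge 2$.

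The main obstacle is precisely the isolated prime $p=7$: the modular-forms argument underlying Theorem \ref{thm:prime}(b) deliberately excludes $p=7$, so no general quaternary-form representation is available and one cannot appeal to the number-theoretic machinery. Instead one must exhibit by explicit search a self-dual $\ZZ_7$-code whose Construction~A lattice is exactly $D_8^2$ (as opposed to some other unimodular lattice of the same dimension), and then confirm the isomorphism computationally; this identification of the lattice, rather than the self-duality check, is the delicate step.
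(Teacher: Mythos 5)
Your proposal is correct and follows essentially the same route as the paper: Lemma \ref{lem:key} applied to $A_4(C_{16,4}(P_8))\cong D_8^2$ handles all $k\ne 2^{m_1}7^{m_2}$, the identification $D_8^2=A_2(F_{16})$ supplies the $2$-frame, the explicit Type~I $\ZZ_7$-code $C_{7,16}$ (with the same $r_A$, $r_B$ and {\sc Magma} verification) supplies the $7$-frame, and Lemma \ref{lem:frame} promotes these to all remaining $k$. The only addition is your explicit "only if" argument via $\min(D_8^2)=2$, which the paper leaves implicit.
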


\begin{cor}\label{cor:16}
There is an extremal Type I $\ZZ_{k}$-code of
length $16$ for every positive integer $k$ with $k \ge 2$.
\end{cor}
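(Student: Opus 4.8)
The plan is to derive this corollary directly from Theorem~\ref{thm:D82} through the frame--code correspondence, mirroring the argument already given for $D_{12}^+$ after Theorem~\ref{thm:D12}. First I would invoke Theorem~\ref{thm:D82}: for every positive integer $k \ge 2$ the lattice $D_8^2$ contains a $k$-frame. By the correspondence recalled in Section~\ref{sec:Pre} (a unimodular lattice $L$ contains a $k$-frame if and only if there is a self-dual $\ZZ_k$-code $C$ with $A_k(C) \cong L$), this produces, for each such $k$, a self-dual $\ZZ_k$-code $C$ of length $16$ with $A_k(C) \cong D_8^2$. So the existence half of the statement is immediate; what remains is to check that $C$ has the claimed type and minimum Euclidean weight.

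Next I would verify that $C$ is Type~I and extremal. For the type: since $A_{k}(C)$ is even if and only if $C$ is Type~II (for even modulus) and $D_8^2$ is an odd unimodular lattice, no Type~II code can yield $D_8^2$; for odd $k$ there are no Type~II codes at all, so $C$ is Type~I in every case. For extremality, note that at length $n=16$ one has $\lfloor n/24\rfloor = 0$, so the bound of Section~\ref{sec:Pre} reads $d_E(C)\le 2k$, and an extremal code of length $16$ is exactly one attaining $d_E(C)=2k$. Using $\min(A_k(C)) = \min\{k,\, d_E(C)/k\}$ together with $\min(D_8^2)=2$, I obtain $\min\{k,\, d_E(C)/k\}=2$. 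For $k\ge 3$ this forces $d_E(C)/k=2$, i.e.\ $d_E(C)=2k$; for $k=2$ it only gives $d_E(C)\ge 4$, which combined with the bound $d_E(C)\le 4$ again yields $d_E(C)=2k$. Hence $C$ is extremal for every $k \ge 2$.

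I do not expect any real obstacle here: all the genuine difficulty was absorbed into the construction of the $k$-frame in Theorem~\ref{thm:D82} (which in turn rests on Theorem~\ref{thm:prime} and Proposition~\ref{prop:const}), and what is left is bookkeeping identical in structure to the $n=12$ case. The only point demanding a moment's care is the boundary case $k=2$, where the minimum-norm identity alone does not pin down $d_E(C)$ and one must additionally invoke the weight bound $d_E(C)\le 4$ to conclude extremality.
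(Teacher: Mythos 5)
Your proposal is correct and follows essentially the same route as the paper: the paper derives the corollary from Theorem~\ref{thm:D82} via the frame--code correspondence and then notes (in the parallel length-$12$ discussion) that since the lattice has minimum norm $2$ and $\min(A_k(C))=\min\{k,d_E(C)/k\}$, the resulting Type~I code must be extremal. Your extra care with the boundary case $k=2$ (invoking the upper bound $d_E(C)\le 2k$ to close the argument) is a detail the paper leaves implicit but is handled correctly.
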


By Lemma \ref{lem:T}, we have the following:

\begin{cor}
Let $\sum_{m=0}^{\infty}A_{m}q^m$ denote the theta series
of $D_8^2$.
Then $A_{k} \ge 32$ for every positive integer $k$ with $k \ge 2$.
\end{cor}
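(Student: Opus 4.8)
The plan is to deduce the coefficient bound directly from the frame-existence result of Theorem~\ref{thm:D82} together with the general lower bound recorded in Lemma~\ref{lem:T}. The essential point is that all of the substantive work has already been carried out: once one knows that $D_8^2$ possesses a $k$-frame for each admissible $k$, the inequality $A_k \ge 32$ follows immediately, so the proof amounts to specializing the two prior results to this lattice.

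First I would observe that $D_8^2$ is a unimodular lattice of dimension $n = 16$, and that by Theorem~\ref{thm:D82} it contains a $k$-frame for every positive integer $k$ with $k \ge 2$. Applying Lemma~\ref{lem:T} to this lattice, with this value of $k$, then yields $A_k \ge 2n = 32$ for each such $k$, which is exactly the assertion of the corollary. Since the range $k \ge 2$ in Theorem~\ref{thm:D82} coincides with the range claimed here, no case analysis is needed.

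To keep the dependence transparent, I would briefly recall the mechanism underlying Lemma~\ref{lem:T}: a $k$-frame $\{f_1, \dots, f_{16}\}$ of $D_8^2$ consists of pairwise orthogonal vectors each of norm $k$, so the $32$ vectors $\pm f_1, \dots, \pm f_{16}$ are distinct nonzero lattice vectors of norm $k$, forcing the norm-$k$ count $A_k$ to be at least $32$. There is no genuine obstacle remaining at this stage; the only things to verify are the bookkeeping that the dimension is indeed $16$, so that $2n = 32$, and that the quoted theorem covers precisely $k \ge 2$. The genuinely hard input — exhibiting a $k$-frame for every $k \ge 2$, including the sporadic value $k = 7$ handled via the explicit code $C_{7,16}$ in the proof of Theorem~\ref{thm:D82} — was already supplied, so the main obstacle lies entirely upstream of this corollary rather than within it.
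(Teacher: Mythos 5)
Your proposal is correct and matches the paper's argument exactly: the corollary is obtained by combining Theorem~\ref{thm:D82} (a $k$-frame in $D_8^2$ for every $k \ge 2$) with Lemma~\ref{lem:T} applied in dimension $n=16$, giving $A_k \ge 2n = 32$. Your extra remark explaining the mechanism of Lemma~\ref{lem:T} via the $32$ distinct vectors $\pm f_1,\dots,\pm f_{16}$ of norm $k$ is a correct gloss on the same argument, not a different route.
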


\subsection{Frames of $D_4^5, A_5^4, D_{20}$ and Length 20}

There are $12$ non-isomorphic extremal odd unimodular lattices in 
dimension $20$ (see \cite[Table 2.2]{SPLAG}).
We have verified 
by {\sc Magma} that the odd unimodular lattices
$A_3(C_{20,3}(D_{10}))$, $A_3(C_{20,3}(D'_{10}))$ and
$A_5(C_{20,5}(D''_{10}))$
in Table \ref{Tab:L} are isomorphic to
the $i$th lattices $(i=11,12,1)$ in dimension $20$
in \cite[Table 16.7]{SPLAG}, where we denote the 
lattices by $D_4^5$, $A_5^4$ and $D_{20}$, respectively.
By Lemma \ref{lem:key}, we investigate the existence of
a $k$-frame in $D_4^5$ and $A_5^4$ for $k=2,5,7,13,23$, and 
a $k$-frame in $D_{20}$ for $k=2, 3,7,11,19,29$.

\begin{table}[thb]
\caption{Extremal Type I $\ZZ_{k}$-codes of length $20$}
\label{Tab:20}
\begin{center}
{\footnotesize
\begin{tabular}{c|l|l|c|l|l}
\noalign{\hrule height0.8pt}
Codes& \multicolumn{1}{c|}{$r_A$} & \multicolumn{1}{c|}{$r_B$} &
Codes& \multicolumn{1}{c|}{$r_A$} & \multicolumn{1}{c}{$r_B$} \\
\hline
$C_{ 5,20}$ &$(0, 0, 0, 1, 1)$ & $( 1, 4, 2, 1, 0)$ &
$C_{ 7,20}$ &$(0, 0, 0, 1, 6)$ & $( 3, 0, 1, 1, 0)$ \\
$C_{13,20}$ &$(0,  0,  0,  1,  1)$ & $( 10,  3,  2,  1,  0)$ &
$C_{23,20}$ &$(0,  0,  0,  1, 18)$ & $(  7,  4,  0,  0,  0)$ \\
\hline
$C'_{ 5,20}$ &$(0, 0, 0, 1, 4)$&$(3, 1, 4, 1, 0)$ &
$C'_{ 7,20}$ &$(0, 0, 0, 1, 5)$&$(1, 5, 3, 1, 0)$\\
$C'_{13,20}$ &$(0, 0, 0, 1, 4)$&$(4, 0, 3, 3, 0)$ &
$C'_{23,20}$ &$(0, 0, 0, 1,12)$&$(3, 5, 7, 1, 0)$\\
\hline
$C''_{ 7,20}$& $(0, 0, 0, 1,  4)$&$(  1,  3,  2, 3, 1)$ &
$C''_{ 9,20}$& $(0,0,0,1,3)$&$(1,2,4,2,6)$ \\
$C''_{11,20}$& $(0, 0, 0, 1,  8)$&$(  5,  6,  6, 3, 2)$ &
$C''_{19,20}$& $(0, 0, 0, 1, 12)$&$( 14, 12, 11, 1, 0)$ \\
$C''_{29,20}$& $(0, 0, 0, 1, 21)$&$(  7, 11, 16, 1, 0)$ & &&\\
\noalign{\hrule height0.8pt}
\end{tabular}
}
\end{center}
\end{table}

\begin{figure}[thb]
\centering
{\small
\begin{tabular}{ll}
$
\left(\begin{array}{cc}
A & B_1+2B_2 \\
\end{array}\right)
=
\left(\begin{array}{cc}
11& 220113303 \\
00& 021012300 \\
10& 222120030 \\
01& 031321330 \\
01& 232220201 \\
11& 231021312 \\
00& 023031002 \\
10& 230133321 \\
11& 333130022 \\
\end{array}\right),
$
&
$
2D=
\left(\begin{array}{c}
202202200 \\
220022200
\end{array}\right)
$
\end{tabular}
\caption{A generator matrix of $C'_{4,20}$}
\label{Fig:20}
}
\end{figure}

There are $7$ binary extremal Type~I code of length $20$,
up to equivalence \cite{Pless72b}.
The unique code with $5$ (resp.\ $45$) codewords of weight $4$ 
is denoted by $M_{20}$ (resp.\ $J_{20}$)
in \cite[Table 2]{Pless72b}.
We have verified by {\sc Magma} that
$A_2(M_{20})$ (resp.\ $A_2(J_{20})$)
is isomorphic to $D_4^5$ (resp.\ $D_{20}$).
It is known 
that there is no binary Type~I code $C$ such that
$A_2(C)$ is isomorphic to $A_5^4$.
There are $6$ inequivalent ternary self-dual codes of
length $20$ and minimum weight $6$ \cite{PSW}.
We have verified by {\sc Magma} that 
$L$ is obtained from some ternary self-dual code of
length $20$ and minimum weight $6$ by Construction A
if and only if $L$ is $D_4^5$ or $A_5^4$.

Let $C_{k,20}$, $C'_{k,20}$ ($k=5,7,13,23$) and 
$C''_{k,20}$ ($k=7,9,11,19,29$)
be the $\ZZ_k$-codes with generator 
matrices of the form (\ref{eq:GM}),
where the first rows $r_A$ and $r_B$ of $A$ and $B$
are listed in Table \ref{Tab:20}.
Since $AA^T+BB^T=-I_{5}$,
these codes are Type~I.
Let $C'_{4,20}$
be the $\ZZ_4$-code with generator matrix of
the following form:
\[
\left(\begin{array}{ccc}
I_9 & A & B_1+2B_2 \\
O    &2I_2 & 2D \\
\end{array}\right),
\]
where we only list in Figure \ref{Fig:20}
the matrices
$\left(\begin{array}{cc}
A & B_1+2B_2 \\
\end{array}\right)$
and $2D$ in order to save space.
Here, $A$, $B_1$, $B_2$ and $D$ are $(1,0)$-matrices
and $O$ denotes the zero matrix.
The self-dual $\ZZ_4$-code $C'_{4,20}$ has been found by 
directly finding a $4$-frame in $A_5^4$ using {\sc Magma}.
Also, some other (new) self-dual $\ZZ_4$-codes 
are constructed in a similar way.

We have verified by {\sc Magma} that
$A_k(C_{k,20})$ is isomorphic to $D_4^5$,
$A_k(C'_{k,20})$ is isomorphic to $A_5^4$ ($k=4,5,7,13,23$), and
$A_k(C''_{k,20})$ is isomorphic to $D_{20}$ ($k=7,9,11,19,29$).
Hence, combined with Lemma \ref{lem:key}, we have the following:

\begin{thm}\label{thm:20}
$D_4^5$ contains a $k$-frame 
if and only if $k$ is a positive integer with $k \ge 2$.
$A_5^4$ contains a $k$-frame 
if and only if $k$ is a positive integer with $k \ge 3$.
$D_{20}$ contains a $k$-frame 
if and only if $k$ is a positive integer $k \ge 2$, $k \ne 3$.
\end{thm}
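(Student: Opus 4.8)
The plan is to prove Theorem~\ref{thm:20} by combining the general framework of Lemma~\ref{lem:key} with explicit code constructions that settle the finitely many exceptional values of $k$ left open by that lemma. For each of the three lattices $D_4^5$, $A_5^4$ and $D_{20}$, Lemma~\ref{lem:key} already supplies a $k$-frame for all sufficiently general $k$, namely every $k$ not of the forbidden multiplicative shape listed in the $(\star)$ column of Table~\ref{Tab:L}. Concretely, $D_4^5$ and $A_5^4$ arise as $A_3(C_{20,3}(D_{10}))$ and $A_3(C_{20,3}(D'_{10}))$ and are covered by Case~(a), so they possess a $k$-frame for every $k \ge 2$ (respectively $k \ge 3$, owing to the minimum norm) with $k \ne 2^{m_1}5^{m_2}7^{m_3}13^{m_4}23^{m_5}$; the lattice $D_{20} = A_5(C_{20,5}(D''_{10}))$ is covered by Case~(c) and has a $k$-frame for every $k \ge 2$ with $k \ne 2^{m_1}3^{m_2}7^{m_3}11^{m_4}19^{m_5}29^{m_6}$. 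Hence the only work remaining is to produce $k$-frames for the prime-power exceptions, and by Lemma~\ref{lem:frame} (since $n=20 \equiv 0 \pmod 4$) it suffices to handle one representative prime frame from each excluded prime base.

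First I would dispose of the base case $k=2$ via binary self-dual codes. There are $7$ inequivalent binary extremal Type~I codes of length $20$, and as recorded in the excerpt one verifies with {\sc Magma} that $A_2(M_{20}) \cong D_4^5$ and $A_2(J_{20}) \cong D_{20}$, giving $2$-frames in those two lattices; the nonexistence of any binary Type~I code $C$ with $A_2(C) \cong A_5^4$ is exactly why the $A_5^4$ statement starts at $k \ge 3$, and the nonexistence for $D_{20}$ at $k=3$ (via the ternary classification in \cite{PSW}) is why $k \ne 3$ is excluded there. Next, for the remaining exceptional primes I would exhibit explicit Type~I $\ZZ_k$-codes whose Construction~A lattice is the desired one. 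These are the codes $C_{k,20}$, $C'_{k,20}$ ($k=5,7,13,23$) and $C''_{k,20}$ ($k=7,9,11,19,29$) of Table~\ref{Tab:20}, built from the generator form (\ref{eq:GM}) with negacirculant blocks $A,B$; self-duality is immediate from $AA^T+BB^T=-I_5$, and one checks by {\sc Magma} that $A_k(C_{k,20}) \cong D_4^5$, $A_k(C'_{k,20}) \cong A_5^4$, and $A_k(C''_{k,20}) \cong D_{20}$. The single genuinely separate construction is the $\ZZ_4$-code $C'_{4,20}$ of Figure~\ref{Fig:20}, needed because $A_5^4$ admits no $2$-frame yet $4 = 2^2$ lies in the Case~(a) exception set; this code was located by directly searching for a $4$-frame in $A_5^4$.

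After assembling these pieces, the three "if" directions follow: every admissible $k$ is either covered directly by Lemma~\ref{lem:key} or is a prime power $p^e$ whose base prime $p$ has been handled by an explicit frame, whence Lemma~\ref{lem:frame} upgrades the $p$-frame to a $p^e$-frame and, more generally, any product of handled prime powers. The "only if" directions are the minimum-norm obstructions: since $A_k(C)$ has minimum norm $\min\{k,\,d_E(C)/k\}$, a $k$-frame forces $k \ge \min(L)$, ruling out $k=2$ for $A_5^4$ and for $D_{20}$ (which have $\min = 2$ but no admissible frame there), while the extra exclusion $k \ne 3$ for $D_{20}$ comes from the ternary classification showing $D_{20}$ is not obtainable from any self-dual ternary code of length $20$.

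The main obstacle I anticipate is the exceptional case $A_5^4$ with $k=4$. The general machinery fails here on two fronts at once: $A_5^4$ has no $2$-frame at all (so Lemma~\ref{lem:frame} cannot bootstrap from $k=2$), and $4$ is a forbidden value under Case~(a), so Proposition~\ref{prop:const} does not directly apply through the tidy negacirculant pair used elsewhere. Consequently one cannot rely on the uniform format (\ref{eq:GM}); instead a bespoke $\ZZ_4$-code with the augmented generator structure of Figure~\ref{Fig:20} must be found by an explicit $4$-frame search inside $A_5^4$, and verifying that its Construction~A lattice is genuinely isomorphic to $A_5^4$ (rather than to another lattice sharing its theta series, as the remark following Lemma~\ref{lem:T} cautions) is the delicate computational step.
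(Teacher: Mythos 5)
Your constructive direction is exactly the paper's: Lemma~\ref{lem:key} covers all $k$ outside the multiplicative exception sets of Table~\ref{Tab:L}, Lemma~\ref{lem:frame} reduces the exceptions to finitely many small values, and the explicit codes of Table~\ref{Tab:20} and Figure~\ref{Fig:20} (together with $M_{20}$, $J_{20}$, and the ternary classification of \cite{PSW}) supply the remaining frames. One small imprecision there: the reduction ``one representative prime frame from each excluded prime base'' is not the right statement, because for $A_5^4$ the base prime $2$ admits no frame and for $D_{20}$ the base prime $3$ admits none; to cover $2^{m_1}$ with $m_1\ge 2$ and $3^{m_2}$ with $m_2\ge 2$ you need a $4$-frame and a $9$-frame respectively. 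Your code list does include $C'_{4,20}$ and $C''_{9,20}$ and you correctly isolate the $k=4$ issue for $A_5^4$, but the analogous role of $C''_{9,20}$ for $D_{20}$ should be spelled out.

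The genuine error is in your ``only if'' paragraph. You assert that the minimum-norm obstruction $k\ge\min(L)$ rules out $k=2$ for $A_5^4$ \emph{and for} $D_{20}$. Since $\min(A_5^4)=\min(D_{20})=2$, that bound excludes only $k=1$; and $D_{20}$ in fact \emph{does} contain a $2$-frame --- you exhibit $A_2(J_{20})\cong D_{20}$ yourself in the preceding paragraph, and the theorem you are proving asserts it. The correct negative arguments, which you state accurately earlier but fail to invoke here, are classification results rather than norm bounds: $k=2$ fails for $A_5^4$ because none of the $7$ binary extremal Type~I codes of length $20$ yields $A_5^4$ under Construction~A, and $k=3$ fails for $D_{20}$ because a $3$-frame would produce a ternary self-dual code of length $20$ and minimum weight $6$ with $A_3(C)\cong D_{20}$, contradicting the classification in \cite{PSW}. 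As written, the final paragraph both contradicts the statement being proved and rests the two nontrivial exclusions on an argument that does not establish them.
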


\begin{cor}\label{cor:20}
There is an extremal Type I $\ZZ_{k}$-code of
length $20$ for every positive integer $k$ with $k \ge 2$.
\end{cor}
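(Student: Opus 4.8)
The plan is to read the corollary off directly from the part of Theorem~\ref{thm:20} concerning $D_4^5$. Recall from Section~\ref{sec:Pre} that a unimodular lattice $L$ contains a $k$-frame if and only if there is a self-dual $\ZZ_k$-code $C$ with $A_k(C) \cong L$. Since Theorem~\ref{thm:20} asserts that $D_4^5$ contains a $k$-frame for every positive integer $k \ge 2$, this correspondence produces, for each such $k$, a self-dual $\ZZ_k$-code $C$ of length $20$ with $A_k(C) \cong D_4^5$.

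Next I would verify that each such $C$ is in fact an extremal Type~I code. For the Type~I assertion, recall that a Type~II $\ZZ_k$-code exists only when the length is divisible by eight (and $k$ is even); since $20$ is not divisible by eight, no self-dual $\ZZ_k$-code of length $20$ can be Type~II, so $C$ is automatically Type~I. For extremality, note that for $n=20$ the bound preceding the definition of extremal codes specializes to $d_E(C) \le 2k \lfloor \frac{20}{24} \rfloor + 2k = 2k$, because $20 \notin \{22,23,46,47\}$ places us in the ``otherwise'' case and $\lfloor \frac{20}{24} \rfloor = 0$. The identity $\min(A_k(C)) = \min\{k, d_E(C)/k\}$ combined with $\min(D_4^5) = 2$ gives $\min\{k, d_E(C)/k\} = 2$; as $k \ge 2$, this forces $d_E(C)/k = 2$, that is $d_E(C) = 2k$, so $C$ meets the bound and is extremal.

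The essential work has already been done in establishing Theorem~\ref{thm:20}, via the frame constructions of Lemma~\ref{lem:key} together with the explicit codes $C_{k,20}$ tabulated for the small exceptional moduli, so I do not expect a genuine obstacle here. The only points requiring care are the elementary observations that length $20$ rules out Type~II codes and that the minimum norm $2$ of $D_4^5$ pins the minimum Euclidean weight to the extremal value $2k$. One could equally invoke $D_{20}$ for $k \ne 3$, but $D_4^5$ alone already covers the full range $k \ge 2$, so the corollary follows at once.
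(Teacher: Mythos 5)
Your proposal is correct and follows essentially the same route as the paper: Theorem~\ref{thm:20} gives a $k$-frame in $D_4^5$ for all $k \ge 2$, the frame--code correspondence yields a self-dual $\ZZ_k$-code $C$ of length $20$ with $A_k(C)\cong D_4^5$, and since $\min(D_4^5)=2$ the code must be extremal (the paper spells this out for length $12$ and leaves it implicit here). Your added remarks that length $20$ rules out Type~II and that the bound reduces to $d_E(C)\le 2k$ are correct fillings-in of details the paper takes for granted.
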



By Lemma \ref{lem:T}, we have the following:

\begin{cor}
Let $\sum_{m=0}^{\infty}A_{m}q^m$ denote the theta series
of $D_4^5$ or $A_5^4$.
Then $A_{k} \ge 40$ for every positive integer $k$ with $k \ge 2$.
Let $\sum_{m=0}^{\infty}A_{m}q^m$ denote the theta series
of $D_{20}$.
Then $A_{k} \ge 40$ for every positive integer $k \ge 2$, $k \ne 3$.
\end{cor}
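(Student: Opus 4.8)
The plan is to combine Lemma~\ref{lem:T} with the frame-existence results of Theorem~\ref{thm:20}. Recall that each of $D_4^5$, $A_5^4$ and $D_{20}$ lives in dimension $n=20$, so Lemma~\ref{lem:T} asserts that whenever one of these lattices contains a $k$-frame, its theta coefficient satisfies $A_k \ge 2n = 40$. Thus the entire statement reduces to reading off, for each lattice and each relevant $k$, that a $k$-frame exists, and then quoting Lemma~\ref{lem:T}.

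For $D_4^5$, Theorem~\ref{thm:20} provides a $k$-frame for every $k \ge 2$; applying Lemma~\ref{lem:T} for each such $k$ immediately gives $A_k \ge 40$ on the whole range $k \ge 2$. For $D_{20}$, Theorem~\ref{thm:20} supplies a $k$-frame for every $k \ge 2$ with $k \ne 3$, and Lemma~\ref{lem:T} then yields $A_k \ge 40$ on exactly the range $k \ge 2$, $k \ne 3$ that is claimed. In both cases nothing further is required beyond invoking the two earlier results.

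The one value not handled by this direct argument is $k=2$ for $A_5^4$: Theorem~\ref{thm:20} only guarantees a $k$-frame for $k \ge 3$, so Lemma~\ref{lem:T} covers $A_k \ge 40$ only for $k \ge 3$. Here I would instead note that $A_5^4$ is an extremal odd unimodular lattice with $\min(A_5^4)=2$, so $A_2$ is precisely its kissing number. Determining this number directly (by {\sc Magma}, as elsewhere in the paper, or from the fact that its root sublattice is $4A_5$, which already contributes $4 \cdot 30 = 120$ vectors of norm $2$) shows $A_2 \ge 40$ with room to spare, thereby closing the gap at $k=2$ and completing the range $k \ge 2$.

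The main—and essentially only—obstacle is this $k=2$ case for $A_5^4$, and it is mild: the frame method genuinely fails there because $A_5^4$ has no $2$-frame, but the needed bound on $A_2$ follows at once from the known kissing number. Everything else is a routine combination of Lemma~\ref{lem:T} with Theorem~\ref{thm:20}.
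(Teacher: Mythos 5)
Your proposal is correct and follows the same basic route as the paper: combine Lemma~\ref{lem:T} (a $k$-frame in a $20$-dimensional unimodular lattice forces $A_k \ge 40$) with the frame-existence statement of Theorem~\ref{thm:20}. You were right to flag that this combination alone does not cover $k=2$ for $A_5^4$, since Theorem~\ref{thm:20} shows $A_5^4$ contains a $k$-frame \emph{only} for $k \ge 3$ (and indeed has no $2$-frame). The paper glosses over this point in the proof itself, but the gap is closed by the remark immediately following the corollary: $D_4^5$ and $A_5^4$ have the identical theta series $1 + 120q^2 + 5120q^3 + \cdots$, so the bound $A_2 \ge 40$ obtained for $D_4^5$ (which does contain a $2$-frame) transfers to $A_5^4$ for free; this is presumably why the corollary phrases the first claim for ``$D_4^5$ or $A_5^4$'' jointly. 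Your alternative fix---reading off $A_2 = 120$ directly from the kissing number, i.e.\ the $4 \cdot 30 = 120$ roots of the root system $A_5^4$---is equally valid and arguably more self-contained, since it does not rely on knowing that the two lattices share a theta series. Either way the conclusion $A_2 \ge 40$ holds, and the rest of your argument is exactly the paper's.
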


\begin{rem}
$D_4^5$ and $A_5^4$ have the identical
theta series 
$1 + 120 q^2 + 5120 q^3 + 67320 q^4 + 503808q^5 + \cdots$, 
and 
$D_{20}$ has theta series
$1 + 760 q^2 + 77560 q^4 + 524288 q^5 +\cdots$.
\end{rem}

\subsection{Length 28}

The largest minimum norm among
odd unimodular lattices in dimension $28$ is $3$.
There are $38$ non-isomorphic optimal odd unimodular
lattices in dimension $28$ \cite{BV}.
In \cite{BV}, the $38$ lattices are 
denoted by
${\mathbf{R}_{28,1}}(\emptyset)$,
${\mathbf{R}_{28,2}}(\emptyset),\ldots,
{\mathbf{R}_{28,36}}(\emptyset)$,
${\mathbf{R}_{28,37e}}(\emptyset)$,
${\mathbf{R}_{28,38e}}(\emptyset)$.
We have verified by {\sc Magma} that 
$A_3(C_{28,3}(D_{14}))$ and $A_5(C_{28,5}(D'_{14}))$ 
in Table \ref{Tab:L}
are isomorphic to 
${\mathbf{R}_{28,32}}(\emptyset)$ and ${\mathbf{R}_{28,15}}(\emptyset)$,
respectively.
By Lemma \ref{lem:key}, we investigate the existence of
a $k$-frame in ${\mathbf{R}_{28,32}}(\emptyset)$
for $k=4,5,7,13,23$ and 
a $k$-frame in ${\mathbf{R}_{28,15}}(\emptyset)$
for $k=3,4,17$. 

\begin{table}[thb]
\caption{Near-extremal Type I $\ZZ_{k}$-codes of length $28$}
\label{Tab:28}
\begin{center}
{\footnotesize
\begin{tabular}{c|l|l}
\noalign{\hrule height0.8pt}
Codes & \multicolumn{1}{c|}{$r_A$} & \multicolumn{1}{c}{$r_B$} \\
\hline
$C_{ 5,28}$ &$(0,0,0,1,3,4,2)$ & $(3,1,2,0,3,4,0)$\\
$C_{ 7,28}$ &$(0,1,2,2,4,2,3)$ & $(2,2,4,0,4,1,2)$\\
$C_{13,28}$ &$(0, 0, 0, 1, 0, 9, 1)$ & $(5, 1, 3, 7, 7, 1, 4)$\\
$C_{23,28}$ &$(0, 0, 0, 1,12, 1, 1)$ & $(3,19, 7, 5,14,21,17)$\\
\hline
$C'_{17,28}$ & $( 0,0,0,1,13,14, 2)$&  $(10,1,1,9,16,11,15)$ \\
\noalign{\hrule height0.8pt}
\end{tabular}
}
\end{center}
\end{table}

Let $C_{k,28}$ ($k=5,7,13,23$) and $C'_{17,28}$
be the $\ZZ_k$-codes with generator 
matrices of the form (\ref{eq:GM}),
where the first rows $r_A$ and $r_B$ of $A$ and $B$
are listed in Table \ref{Tab:28}.
Since $AA^T+BB^T=-I_{7}$,
these codes are Type~I.
Let 
$C_{4,28}$ and $C'_{4,28}$  
be the $\ZZ_4$-codes with generator matrices of
the following form:
\[
\left(\begin{array}{ccc}
I_{13} & A & B_1+2B_2 \\
O    &2I_{2} & 2D \\
\end{array}\right),
\]
where we list in Figure \ref{Fig}
the matrices
$
\left(\begin{array}{cc}
A & B_1+2B_2 \\
2I_{2} & 2D \\
\end{array}\right).
$
Then these codes are Type~I.
For $k=4,5,7,13,23$,
we have verified by {\sc Magma} that
$A_k(C_{k,28})$ is isomorphic to ${\mathbf{R}_{28,32}}(\emptyset)$.
For $k=4,17$,
we have verified by {\sc Magma} that
$A_k(C'_{k,28})$ is isomorphic to ${\mathbf{R}_{28,15}}(\emptyset)$.
It is known that
${\mathbf{R}_{28,15}}(\emptyset)$ contains a $3$-frame
(see  \cite{HMV} for the classification of $3$-frames in 
the $38$ lattices).
Hence, combined with Lemma \ref{lem:key}, we have the following:

\begin{thm}\label{thm:28}
${\mathbf{R}_{28,i}}(\emptyset)$ $(i=15,32)$
contains a $k$-frame 
if and only if $k$ is a positive integer with $k \ge 3$.
\end{thm}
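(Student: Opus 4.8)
The plan is to prove the two directions separately, obtaining necessity at once from the minimum norm and assembling sufficiency from Lemma~\ref{lem:key}, the explicit codes constructed above, and Lemma~\ref{lem:frame}. Write $L={\mathbf{R}_{28,i}}(\emptyset)$ with $i\in\{15,32\}$, recalling that $\min(L)=3$. For the \emph{only if} direction, suppose $L$ contains a $k$-frame $\{f_1,\dots,f_{28}\}$. Each $f_j$ satisfies $(f_j,f_j)=k$, so $f_j$ is a nonzero vector of $L$ of norm $k$, whence $k\ge\min(L)=3$. In particular $L$ has no $2$-frame, and since $k\ge 2$ throughout, necessity is exactly $k\ge 3$.

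For the \emph{if} direction I would first invoke Lemma~\ref{lem:key}, which already produces a $k$-frame for every $k\ge 3$ lying outside the exceptional set recorded in Table~\ref{Tab:L}: namely $k=2^{m_1}5^{m_2}7^{m_3}13^{m_4}23^{m_5}$ when $i=32$ (Case~(a)) and $k=2^{m_1}3^{m_2}17^{m_3}$ when $i=15$ (Case~(d)). It remains to realize the exceptional $k$. The central reduction is Lemma~\ref{lem:frame}: since $28\equiv 0\pmod 4$, a single $p$-frame propagates to a $pm$-frame for every positive integer $m$, so it suffices to exhibit a frame for each prime ``building block'' of the exceptional set, together with one for the prime power $4$.

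Concretely, for $L={\mathbf{R}_{28,32}}(\emptyset)$ I would split an exceptional $k\ge 3$ as follows. If $k$ is divisible by one of $5,7,13,23$, say $k=pm$, then the $p$-frame supplied by $C_{p,28}$ (verified to satisfy $A_p(C_{p,28})\cong L$) yields a $k$-frame via Lemma~\ref{lem:frame}; otherwise $k=2^{m_1}$ with $m_1\ge 2$ since $k\ge 3$, so $4\mid k$ and the $4$-frame from $C_{4,28}$ propagates to a $k$-frame. The case $i=15$ is analogous: a factor $3$ is covered by the $3$-frame of ${\mathbf{R}_{28,15}}(\emptyset)$ recorded in \cite{HMV}, a factor $17$ by the $17$-frame from $C'_{17,28}$, and the remaining pure powers of $2$ (necessarily $\ge 4$) by the $4$-frame from $C'_{4,28}$, each extended through Lemma~\ref{lem:frame}. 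These splits are exhaustive, so together with Lemma~\ref{lem:key} they cover every $k\ge 3$.

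The main obstacle is the prime-power case $k=4$. Because $\min(L)=3>2$ there is no $2$-frame, so $4=2^2$ cannot be bootstrapped from a $2$-frame through Lemma~\ref{lem:frame}; a genuine self-dual $\ZZ_4$-code $C$ with $A_4(C)\cong L$ must be produced directly. The pure negacirculant shape (\ref{eq:GM}) need not yield such a code, which is why $C_{4,28}$ and $C'_{4,28}$ are taken in the augmented form with a $2I_2$ block, found by searching for a $4$-frame inside $L$ itself. Verifying the isomorphisms $A_4(C_{4,28})\cong{\mathbf{R}_{28,32}}(\emptyset)$ and $A_4(C'_{4,28})\cong{\mathbf{R}_{28,15}}(\emptyset)$, together with the prime cases $A_k(C_{k,28})\cong{\mathbf{R}_{28,32}}(\emptyset)$ and $A_{17}(C'_{17,28})\cong{\mathbf{R}_{28,15}}(\emptyset)$, is the computational heart of the argument and is carried out in {\sc Magma}. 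Once all exceptional frames are in hand, combining them with Lemma~\ref{lem:key} and Lemma~\ref{lem:frame} gives a $k$-frame for every $k\ge 3$, which with the necessity established above completes the proof.
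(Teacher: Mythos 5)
Your proposal is correct and follows essentially the same route as the paper: necessity from $\min(L)=3$, sufficiency from Lemma~\ref{lem:key} for non-exceptional $k$, and the exceptional values reduced via Lemma~\ref{lem:frame} to the finitely many cases $k=4,5,7,13,23$ (for $i=32$) and $k=3,4,17$ (for $i=15$), each realized by an explicit self-dual $\ZZ_k$-code verified in {\sc Magma}. Your explicit remark that $k=4$ cannot be bootstrapped from a $2$-frame and requires a directly constructed $\ZZ_4$-code matches the paper's treatment exactly.
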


\begin{lem}
Let $C$ be a Type~I $\ZZ_k$-code of length $28$.
Then $d_E(C) \le 3k$ 
for every positive integer $k$ with $k \ge 2$.
\end{lem}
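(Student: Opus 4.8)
The plan is to reduce the lemma to excluding the single value $d_E(C)=4k$. Every Euclidean weight of $C$ is a multiple of $k$, and the combined bound displayed after Lemma~\ref{lem:bound} gives $d_E(C)\le 2k\lfloor 28/24\rfloor+2k=4k$ for length $28$ (we are in the ``otherwise'' case, since $28\not\equiv 22,23,46,47$). Hence it suffices to rule out $d_E(C)=4k$. For $k=3$ nothing further is needed: the bound (\ref{eq:kbound}) already yields $d_E(C)\le 3\lfloor 28/12\rfloor+3=9=3k$. Note also that the hypothesis ``Type~I'' is automatic here, since a Type~II code requires length divisible by eight.

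Next I would dispose of all $k\ge 4$ at once through the Construction~A lattice $A_k(C)$. If $d_E(C)=4k$, then $\min(A_k(C))=\min\{k,d_E(C)/k\}=\min\{k,4\}=4$. But $A_k(C)$ is a unimodular lattice in dimension $28$, and since $28\not\equiv 0\pmod 8$ it is necessarily odd; by the Bacher--Venkov classification~\cite{BV}, recalled at the beginning of this subsection, the largest minimum norm of an odd unimodular lattice in dimension $28$ is $3$. This contradicts $\min(A_k(C))=4$, so $d_E(C)\le 3k$ for every $k\ge 4$.

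The genuine obstacle is the remaining case $k=2$, where $d_E(C)$ is the Hamming weight and $3k=6$; here the lattice argument is vacuous, because $\min(A_2(C))=\min\{2,4\}=2$ regardless of the minimum distance. To exclude a binary self-dual code of length $28$ with $d=8$, I would argue through the shadow, exactly as in part~(b) of the previous lemma. Assuming $d=8$, one checks directly that $A_2(C)$ is an odd unimodular lattice in dimension $28$ with minimum norm $2$ having exactly $56$ vectors of norm $2$ and no vector of norm $3$: weight-$4$ codewords (the only extra source of norm-$2$ vectors) and weight-$2$ and weight-$6$ codewords (the only sources of norm-$3$ vectors) are all absent. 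Feeding these data into the description~\cite[(2) and (3)]{CS-odd} of the possible theta series of an odd unimodular lattice and its shadow leaves a single integer parameter, and non-negativity of the shadow coefficients then forces an impossible value. Equivalently, one may work entirely on the code side: the weight enumerator of a putative length-$28$ Type~I code with $d=8$ is uniquely determined by Gleason's theorem from $A_0=1$, $A_2=A_4=A_6=0$, and its shadow weight enumerator then has a negative coefficient. Either route shows $d=8$ is impossible, completing the case $k=2$ and hence the lemma.
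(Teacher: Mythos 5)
Your argument coincides with the paper's for $k\ge 3$: the case $k\ge 4$ is settled exactly as in the paper by the observation that $d_E(C)\ge 4k$ would force $\min(A_k(C))=4$, contradicting the fact that $3$ is the largest minimum norm of an odd unimodular lattice in dimension $28$, and the case $k=3$ is just the bound (\ref{eq:kbound}) (equivalently the citation of \cite{MPS}). Where you genuinely diverge is $k=2$: the paper simply invokes the classification of binary self-dual codes of length up to $32$ \cite{CPS}, whereas you propose a self-contained exclusion of a binary self-dual $[28,14,8]$ code via Gleason's theorem and the shadow. That route does work, and it is arguably preferable since it does not lean on a full classification: imposing $A_0=1$, $A_2=A_4=A_6=0$ determines the Gleason coefficients uniquely ($a_1=-14$, $a_2=21$, $a_3=-42$), and the shadow transform then gives $-a_3/16=21/8$ as the number of weight-$2$ vectors in the shadow. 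One correction of detail: the obstruction is therefore a \emph{non-integral} (positive) shadow coefficient, not a negative one as you predicted; likewise, on the lattice side it is integrality rather than non-negativity of the shadow theta coefficients that fails, and with $A_0,A_1,A_2,A_3$ all prescribed there is no free parameter left (rather than ``a single integer parameter''). These are slips in the forecast of how the computation ends, not gaps in the method, so your proof is correct once that computation is carried out; it trades the paper's external citation for a short explicit verification.
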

\begin{proof}
As described above,
the largest minimum norm among
odd unimodular lattices in dimension $28$ is $3$.
Assume that $k \ge 4$ and $d_E(C) =4k$.
Since $\min(A_{k}(C))=\min\{k, d_{E}(C)/k\}$,
$\min(A_{k}(C)) =4$, which is a contradiction.
For $k=2,3$, it follows that $d_E(C) \le 3k$ (see \cite{CPS, MPS}).
\end{proof}

By the above lemma, there is no extremal Type~I $\ZZ_k$-code
of length $28$ for every positive integer $k$ with $k \ge 2$.
There is a binary Type I code of length $28$
and minimum weight $6$ (see \cite{CPS}).
Hence, we have the following:

\begin{cor}\label{cor:28}
There is no extremal Type I $\ZZ_{k}$-code of
length $28$ for every positive integer $k$ with $k \ge 2$.
There is  a near-extremal Type I $\ZZ_{k}$-code of
length $28$ for every positive integer $k$ with $k \ge 2$.
\end{cor}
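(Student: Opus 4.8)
The plan is to split the statement into its two assertions, since the non-existence part is essentially immediate from the lemma just proved, while the existence part is a matter of assembling Theorem~\ref{thm:28} with the Construction~A dictionary. For the non-existence of extremal codes, I would first record that at length $n=28$ the bound specializes to $d_E(C) \le 2k\lfloor 28/24 \rfloor + 2k = 4k$, so an \emph{extremal} Type~I $\ZZ_k$-code would need $d_E(C)=4k$. But the immediately preceding lemma asserts $d_E(C)\le 3k$ for every Type~I $\ZZ_k$-code of length $28$ and every $k\ge 2$. These are incompatible, so no extremal Type~I code of length $28$ exists; this settles the first assertion with no additional computation.

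For the existence of near-extremal codes I would use that near-extremality here means $d_E(C)+k=4k$, i.e.\ $d_E(C)=3k$, and then treat $k=2$ and $k\ge 3$ separately. For $k=2$ I would invoke the binary Type~I code of length $28$ and minimum weight $6$ from \cite{CPS}: since $d_E=d_H=6=3k$ for binary codes, this code is near-extremal. For $k\ge 3$ I would appeal to Theorem~\ref{thm:28}, which guarantees a $k$-frame in the optimal odd unimodular lattice ${\mathbf{R}_{28,32}}(\emptyset)$. By the frame–code correspondence under Construction~A, such a frame yields a self-dual $\ZZ_k$-code $C$ with $A_k(C)\cong {\mathbf{R}_{28,32}}(\emptyset)$; because this lattice is odd, $C$ cannot be Type~II and is therefore Type~I. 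It then remains only to read off $d_E(C)$ from the minimum norm.

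The one point requiring care is pinning down $d_E(C)=3k$ exactly rather than just one inequality. Since ${\mathbf{R}_{28,32}}(\emptyset)$ has minimum norm $3$, the identity $\min(A_k(C))=\min\{k,d_E(C)/k\}=3$ forces $d_E(C)/k\ge 3$, because for $k\ge 3$ the value $3$ cannot be contributed by the $k$-term unless $d_E(C)/k$ already meets it; combined with the upper bound $d_E(C)\le 3k$ from the preceding lemma this gives $d_E(C)=3k$, so $C$ is near-extremal. There is no genuine obstacle here: all the substantive work has been carried out in Theorem~\ref{thm:28} and in the Euclidean-weight lemma, and the corollary is essentially bookkeeping, the only subtlety being to keep the definition of near-extremal aligned with the length-$28$ bound $4k$.
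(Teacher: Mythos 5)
Your proposal is correct and follows essentially the same route as the paper: the non-existence part is exactly the paper's appeal to the lemma $d_E(C)\le 3k$ against the length-$28$ bound $4k$, and the existence part combines the binary minimum-weight-$6$ code for $k=2$ with Theorem~\ref{thm:28} and the Construction~A correspondence for $k\ge 3$. Your extra care in pinning down $d_E(C)=3k$ from $\min(A_k(C))=\min\{k,d_E(C)/k\}=3$ just makes explicit what the paper leaves as bookkeeping.
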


Since ${\mathbf{R}_{28,i}}(\emptyset)$ $(i=1,2,\ldots,36)$
have the identical theta series,
by Lemma \ref{lem:T}, we have the following:

\begin{cor}
Let $\sum_{m=0}^{\infty}A_{m}q^m$ denote the theta series
of ${\mathbf{R}_{28,i}}(\emptyset)$ $(i=1,2,\ldots,36)$.
Then $A_{k} \ge 56$ for every positive integer 
$k$ with $k \ge 3$.
\end{cor}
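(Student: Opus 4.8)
The plan is to deduce this purely from Theorem~\ref{thm:28} and Lemma~\ref{lem:T}, with no new computation required. Since the lattices in question have dimension $n=28$, the bound supplied by Lemma~\ref{lem:T} reads $A_k \ge 2n = 56$, which is exactly the claimed inequality; so the entire task reduces to exhibiting, for each $k \ge 3$, a $k$-frame in a lattice whose theta series is the common theta series of the ${\mathbf{R}_{28,i}}(\emptyset)$.

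Concretely, I would first invoke Theorem~\ref{thm:28}, which asserts that ${\mathbf{R}_{28,32}}(\emptyset)$ (or ${\mathbf{R}_{28,15}}(\emptyset)$) contains a $k$-frame for every positive integer $k$ with $k \ge 3$. Applying Lemma~\ref{lem:T} to this lattice then gives immediately that the coefficient $A_k$ of its theta series satisfies $A_k \ge 56$ for all $k \ge 3$.

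Finally, I would appeal to the fact recorded just before the statement, namely that all thirty-six lattices ${\mathbf{R}_{28,i}}(\emptyset)$ with $i=1,2,\ldots,36$ share one and the same theta series $\sum_{m=0}^{\infty} A_m q^m$. Consequently the single inequality $A_k \ge 56$ just obtained for ${\mathbf{R}_{28,32}}(\emptyset)$ is automatically valid for the whole family. The only point that requires care---and the nearest thing to an obstacle in an otherwise immediate argument---is precisely this last logical step: Theorem~\ref{thm:28} produces frames only for the two representatives $i=15,32$, so one must not attempt to build frames in each of the remaining thirty-four lattices. The shared theta series is exactly the device that lets a frame in a single representative control the coefficient $A_k$ uniformly across all thirty-six lattices.
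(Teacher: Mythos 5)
Your proposal is correct and follows exactly the paper's argument: Theorem~\ref{thm:28} supplies a $k$-frame in ${\mathbf{R}_{28,32}}(\emptyset)$ (equivalently $i=15$) for every $k\ge 3$, Lemma~\ref{lem:T} then gives $A_k\ge 2\cdot 28=56$ for that lattice, and the identical theta series of ${\mathbf{R}_{28,i}}(\emptyset)$ for $i=1,\ldots,36$ transfers the bound to the whole family. No gaps.
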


\begin{figure}[thb]
\centering
{\footnotesize
$
\left(\begin{array}{cc}
00&3221032113010\\
00&2312302202000\\
01&1011113132031\\
01&2021011201031\\
10&3033332032202\\
00&2220031132311\\
00&1130232110223\\
11&2213122020013\\
01&3200201111201\\
01&3133230220230\\
10&3111000202123\\
10&3011332120200\\
10&3331011112112 \\
20&2220022000000 \\
02&0022222000000
\end{array}\right) \text{ and }
\left(\begin{array}{cc}
01& 1023301203302\\
01& 1022200000021\\
01& 1130203022312\\
11& 1202303012212\\
00& 2321232113032\\
01& 0002112332213\\
11& 1113323310300\\
00& 3321000023111\\
00& 1210231221321\\
11& 2012013002211\\
11& 1010001123020\\
11& 2203101320001\\
00& 3302011030033\\
20& 0002202020200 \\
02& 2220222202200
\end{array}\right) 
$
\caption{Generator matrices of $C_{4,28}$ and $C'_{4,28}$}
\label{Fig}
}
\end{figure}

\subsection{Length 32}


There are $5$ non-isomorphic extremal odd unimodular lattices in 
dimension $32$, and 
these $5$ lattices are related to the $5$ inequivalent 
binary extremal Type~II codes of length $32$ \cite{CS-odd},
where the $5$ codes are denoted by 
$\text{C}81,\text{C}82,\ldots,\text{C}85$ 
in \cite[Table A]{CPS}.
We denote the extremal odd unimodular lattice
related to $\text{C}i$ by $L_{32,i}$ ($i=81,\ldots,85$).
We have verified 
by {\sc Magma} that the odd unimodular lattice
$A_4(C_{32,4}(D_{16}))$  in Table \ref{Tab:L}
is isomorphic to $L_{32,82}$.
Since $A_4(C_{32,4}(D_{16}))$ contains a $4$-frame,
we investigate the existence of
a $k$-frame in $L_{32,82}$ for $k=6,9$ by Lemma \ref{lem:key}.

\begin{table}[thb]
\caption{Extremal Type I $\ZZ_{k}$-codes of length $32$}
\label{Tab:32}
\begin{center}
{\footnotesize
\begin{tabular}{c|l|l}
\noalign{\hrule height0.8pt}
Codes & \multicolumn{1}{c|}{$r_A$} & \multicolumn{1}{c}{$r_B$} \\
\hline
$C_{6,32}$ & $(0,0,1,2,2,2,1,2)$ & $(1,0,5,5,1,1,3,3)$ \\
$C_{9,32}$ & $(0,0,1,5,0,6,0,1)$ & $(0,6,2,2,7,6,1,7)$ \\
\noalign{\hrule height0.8pt}
\end{tabular}
}
\end{center}
\end{table}

For $k=6,9$, let $C_{k,32}$ be the
$\ZZ_k$-code  with generator matrix of the form (\ref{eq:GM}),
where the first rows $r_A$ and $r_B$ of $A$ and $B$
are listed in Table \ref{Tab:32}.
Since $AA^T+BB^T=-I_{8}$,
these codes are Type~I.
For $k=6,9$, we have verified by {\sc Magma} that
$A_k(C_{k,32})$ is isomorphic to $L_{32,82}$.
Hence, combined with Lemma \ref{lem:key}, we have the following:

\begin{thm}\label{thm:32}
$L_{32,82}$ contains a $k$-frame
if and only if $k$ is a positive integer with $k \ge 4$.
\end{thm}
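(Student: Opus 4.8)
The plan is to prove both implications separately, with the necessity being immediate and the sufficiency resting on combining Lemma~\ref{lem:key}, a small number of explicit code constructions, and the dilation Lemma~\ref{lem:frame}. For the \emph{only if} direction, I would invoke the correspondence between frames and Construction~A: a $k$-frame in $L_{32,82}$ yields a self-dual $\ZZ_k$-code $C$ with $A_k(C) \cong L_{32,82}$, and since $\min(A_k(C)) = \min\{k, d_E(C)/k\} \le k$, the minimum norm cannot exceed $k$. As $L_{32,82}$ is extremal in dimension $32$, its minimum norm is $4$, which forces $k \ge 4$. This settles necessity.

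For the \emph{if} direction, the starting point is that by Lemma~\ref{lem:key} applied to $A_4(C_{32,4}(D_{16})) \cong L_{32,82}$ (case~(e) of Theorem~\ref{thm:prime}), the lattice already contains a $k$-frame for every $k \ge 4$ with $k \ne 2^{m_1} 3^{m_2}$. It therefore remains only to treat the exceptional values $k = 2^{m_1} 3^{m_2} \ge 4$. To cover these, I would exhibit $k$-frames for the three seed values $k = 4, 6, 9$ and then propagate them via Lemma~\ref{lem:frame}, which applies since $32 \equiv 0 \pmod 4$. The $4$-frame is supplied directly by the presentation $L_{32,82} = A_4(C_{32,4}(D_{16}))$; for $k = 6, 9$ I would use the self-dual $\ZZ_k$-codes $C_{6,32}$ and $C_{9,32}$ determined by the negacirculant first rows $r_A, r_B$ in Table~\ref{Tab:32}, confirming self-duality through the identity $AA^T + BB^T = -I_8$ and then verifying by {\sc Magma} that $A_6(C_{6,32})$ and $A_9(C_{9,32})$ are each isomorphic to $L_{32,82}$.

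Finally, I would record the elementary bookkeeping that these three seeds exhaust all exceptional values. A $4$-frame yields every multiple-of-$4$ frame, hence all $k = 2^{m_1} 3^{m_2}$ with $m_1 \ge 2$; a $6$-frame covers the values with $m_1 = 1$ (necessarily $m_2 \ge 1$ since $k \ge 4$), as these are exactly the multiples of $6$ of this shape; and a $9$-frame covers the values with $m_1 = 0$ (necessarily $m_2 \ge 2$), which are the relevant multiples of $9$. A short check on the exponents confirms that the three families $m_1 \ge 2$, $(m_1, m_2) = (1, \ge 1)$, and $(m_1, m_2) = (0, \ge 2)$ partition all $k = 2^{m_1} 3^{m_2}$ with $k \ge 4$, completing the sufficiency.

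The main obstacle is not conceptual but lies in the computational verification: among the five non-isomorphic extremal odd unimodular lattices in dimension $32$, one must ensure that the codes $C_{6,32}$ and $C_{9,32}$ land on the \emph{specific} lattice $L_{32,82}$, which is precisely why the isomorphisms $A_k(C_{k,32}) \cong L_{32,82}$ are checked directly in {\sc Magma} rather than argued by hand. The remaining delicate point is to confirm that the three seed frames together with Lemma~\ref{lem:frame} leave no gap among the numbers $2^{m_1} 3^{m_2} \ge 4$, which the exponent case analysis above resolves.
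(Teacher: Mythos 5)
Your proposal is correct and follows essentially the same route as the paper: the $4$-frame comes from the presentation $L_{32,82}\cong A_4(C_{32,4}(D_{16}))$, Lemma~\ref{lem:key} (via case~(e) of Theorem~\ref{thm:prime}) handles all $k\ge 4$ with $k\ne 2^{m_1}3^{m_2}$, and the seeds $k=6,9$ from the codes of Table~\ref{Tab:32} together with Lemma~\ref{lem:frame} close the remaining cases. Your explicit exponent case analysis ($m_1\ge 2$, $m_1=1$, $m_1=0$) just spells out what the paper leaves implicit, and the necessity argument via $\min(L_{32,82})=4$ matches the paper's convention throughout.
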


There are three inequivalent
binary extremal Type I codes of length $32$ \cite{C-S}.
Any ternary self-dual code of length $32$ has minimum weight 
at most $9$ \cite{MS73}.
Hence, we have the following:

\begin{cor}\label{cor:32}
There is an extremal Type I $\ZZ_{k}$-code of
length $32$ for every positive integer $k$ with $k \ne 1,3$.
\end{cor}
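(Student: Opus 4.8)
The plan is to handle three ranges of $k$ separately, converting in each the statement about codes into one about $k$-frames in an extremal odd unimodular lattice of dimension $32$ through Construction~A.

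For $k \ge 4$ I would invoke Theorem~\ref{thm:32}, which asserts that the extremal odd unimodular lattice $L_{32,82}$ contains a $k$-frame. By the equivalence recalled in Section~\ref{sec:Pre}, such a frame yields a self-dual $\ZZ_k$-code $C$ with $A_k(C) \cong L_{32,82}$; since $L_{32,82}$ is odd, $C$ is Type~I. To confirm extremality I would use $\min(A_k(C)) = \min\{k, d_E(C)/k\}$: because $\min(L_{32,82}) = 4$ and $k \ge 4$, the value $4$ must be attained by the term $d_E(C)/k$, which forces $d_E(C) = 4k$, exactly the extremal bound for length $32$. Hence $C$ is an extremal Type~I $\ZZ_k$-code.

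For the small cases I would cite the literature directly. When $k = 2$, the three inequivalent binary extremal Type~I codes of length $32$ from~\cite{C-S} already furnish an example. When $k = 3$, the Euclidean weight coincides with the Hamming weight, so extremality in the sense adopted here would require $d_E(C) = 4k = 12$; since every ternary self-dual code of length $32$ has minimum weight at most $9$ by~\cite{MS73}, no such code exists, and $k = 3$ is genuinely excluded. The value $k = 1$ is excluded by the standing assumption $k \ge 2$. Together these cases give the result for all $k \ne 1,3$.

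The entire corollary reduces to Theorem~\ref{thm:32}, whose proof supplies the required $k$-frames in $L_{32,82}$ for every $k \ge 4$ (from Proposition~\ref{prop:const} and Lemma~\ref{lem:key}, with Lemma~\ref{lem:frame} and the explicit codes $C_{6,32}, C_{9,32}$ covering the residual values $k = 6, 9$). Granting that theorem, what is left here is only the brief extremality bookkeeping in the $k \ge 4$ case plus two citations, so I expect no genuine obstacle: the substantive content lies wholly in Theorem~\ref{thm:32}.
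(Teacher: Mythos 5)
Your proposal is correct and follows essentially the same route as the paper: Theorem~\ref{thm:32} supplies the $k$-frames in $L_{32,82}$ for $k \ge 4$ (and Construction~A with $\min(L_{32,82})=4$ forces extremality), the binary case is covered by the three extremal Type~I codes of \cite{C-S}, and the ternary exclusion follows from the bound of \cite{MS73}. The only cosmetic point is that for $k=4$ the equality $d_E(C)=4k$ comes from combining $d_E(C)/k \ge 4$ with the upper bound $d_E(C)\le 4k$ rather than from the minimum being "attained by" $d_E(C)/k$, but this does not affect the argument.
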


Since the $5$ non-isomorphic extremal odd unimodular lattices 
have the identical theta series  \cite{CS-odd}, 
by Lemma \ref{lem:T}, we have the following:

\begin{cor}
Let $\sum_{m=0}^{\infty}A_{m}q^m$ denote the theta series
of an extremal odd unimodular lattice in dimension $32$.
Then $A_{k} \ge 64$ for every positive integer 
$k$ with $k \ge 4$.
\end{cor}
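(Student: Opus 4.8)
The plan is to combine the frame existence established in Theorem~\ref{thm:32} with Lemma~\ref{lem:T}; given those two ingredients the corollary is immediate, so the work is essentially bookkeeping.

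First I would fix a positive integer $k \ge 4$. By Theorem~\ref{thm:32}, the extremal odd unimodular lattice $L_{32,82}$ contains a $k$-frame. Since $L_{32,82}$ has dimension $n = 32$, Lemma~\ref{lem:T} applies and yields that the coefficient $A_k$ in the theta series of $L_{32,82}$ satisfies $A_k \ge 2n = 64$.

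It remains only to pass from $L_{32,82}$ to an arbitrary extremal odd unimodular lattice in dimension $32$. Here I would invoke the fact, recorded just above from~\cite{CS-odd}, that all five non-isomorphic extremal odd unimodular lattices in dimension $32$ share one and the same theta series $\sum_{m=0}^{\infty} A_m q^m$. Consequently the coefficient $A_k$ is identical for each of them and equals the value already bounded below for $L_{32,82}$, so the inequality $A_k \ge 64$ holds for every $k \ge 4$ independently of which of the five lattices is taken.

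There is no genuine obstacle in this argument: the substantive content was already done in establishing the frame in Theorem~\ref{thm:32}, and the only point requiring care is to insert the correct dimension $n = 32$ into the quantity $2n$ of Lemma~\ref{lem:T} so as to obtain the stated bound $64$.
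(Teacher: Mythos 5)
Your proof is correct and follows exactly the paper's own route: Theorem~\ref{thm:32} supplies the $k$-frame in $L_{32,82}$ for each $k\ge 4$, Lemma~\ref{lem:T} with $n=32$ gives $A_k\ge 64$, and the fact from \cite{CS-odd} that all five extremal odd unimodular lattices in dimension $32$ share the same theta series transfers the bound to an arbitrary such lattice. Nothing is missing.
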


For each extremal odd unimodular lattice in dimension $32$,
one of the even unimodular neighbors is 
extremal \cite{CS-odd}.
Moreover, it follows from the construction in \cite{CS-odd}
that the extremal even unimodular neighbor of $L_{32,82}$ 
is the $32$-dimensional Barnes--Wall lattice $BW_{32}$
(see e.g. \cite[Chapter 8, Section 8]{SPLAG} for $BW_{32}$).
Since the even sublattice of $L_{32,82}$ contains
a $2k$-frame for every positive integer $k$ with $k \ge 2$
by Theorem \ref{thm:32},
we have the following:

\begin{prop}
$BW_{32}$ contains a $2k$-frame
if and only if $k$ is a positive integer with $k \ge 2$.
\end{prop}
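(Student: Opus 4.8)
The plan is to deduce this proposition directly from Theorem~\ref{thm:32} together with the structural facts about even unimodular neighbors already assembled in the surrounding text. The key observation is that the statement concerns $BW_{32}$, but the work has really already been done for the odd lattice $L_{32,82}$; the remaining task is to transport a frame between a lattice and one of its even neighbors. Recall from Section~\ref{sec:2U} that if $L$ is an odd unimodular lattice in dimension divisible by eight, then writing $L_0^*=L_0\cup L_1\cup L_2\cup L_3$ with $L=L_0\cup L_2$, the two even unimodular neighbors are $L_0\cup L_1$ and $L_0\cup L_3$. The text states that the extremal even neighbor of $L_{32,82}$ is precisely $BW_{32}$.

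First I would establish the forward direction. A $2k$-frame consists of $32$ mutually orthogonal vectors each of norm $2k$, which is even; hence every such vector lies in the even sublattice $(L_{32,82})_0$, and a fortiori in $BW_{32}=(L_{32,82})_0\cup L_i$ for the appropriate $i\in\{1,3\}$. So it suffices to exhibit a $2k$-frame inside the even sublattice of $L_{32,82}$. By Theorem~\ref{thm:32}, $L_{32,82}$ contains a $k$-frame for every positive integer $k\ge 4$, and more to the point it contains an $m$-frame for every even $m\ge 2$: applying Lemma~\ref{lem:frame} (with $n=32\equiv 0\pmod 4$) to a base frame shows that the set of admissible frame parameters is closed under multiplication, so once a $2$-frame or a $4$-frame is available, every $2k$ with $k\ge 2$ is realized. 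A frame of even norm $2k$ automatically sits in the even sublattice, hence in $BW_{32}$, giving a $2k$-frame of $BW_{32}$ for every $k\ge 2$.

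For the reverse (necessity) direction I would argue that $BW_{32}$ cannot contain a $2$-frame, i.e.\ the case $k=1$ must be excluded. Here the relevant invariant is the minimum norm: $BW_{32}$ is an extremal \emph{even} unimodular lattice in dimension $32$, so $\min(BW_{32})=2\lfloor 32/24\rfloor+2=4$. A $2$-frame would consist of nonzero vectors of norm $2$, contradicting $\min(BW_{32})=4$. Thus no $2$-frame exists, which rules out $k=1$, and combined with the construction above this gives exactly the stated equivalence: $BW_{32}$ contains a $2k$-frame if and only if $k\ge 2$.

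The main obstacle, such as it is, lies in verifying cleanly that a $2k$-frame of $L_{32,82}$ actually lands in the specific neighbor $BW_{32}$ rather than only in the abstract even sublattice $(L_{32,82})_0=L_0\cup L_2\cap(\text{even part})$; one must confirm that the even frame vectors all belong to $L_0\cup L_i$ for the correct choice of $i$ identifying $BW_{32}$. Since every norm-$2k$ frame vector is even and hence lies in $L_0\subseteq BW_{32}$ regardless of which of $L_1,L_3$ is adjoined, this subtlety dissolves: the frame lies in $L_0$ itself, which is common to both neighbors. The rest is the bookkeeping of Lemma~\ref{lem:frame}, which is routine.
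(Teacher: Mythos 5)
Your proposal follows the paper's own route: Theorem~\ref{thm:32} supplies a $2k$-frame in $L_{32,82}$ for every $k\ge 2$ (since $2k\ge 4$), its vectors have even norm and therefore lie in the even sublattice $L_0$, which is contained in the extremal even neighbor $BW_{32}$; the ``only if'' direction, which the paper leaves implicit, is exactly your observation that $\min(BW_{32})=4$ rules out a $2$-frame. One caveat on your write-up: the aside that $L_{32,82}$ ``contains an $m$-frame for every even $m\ge 2$'' is false for $m=2$ (its minimum norm is $4$), and Lemma~\ref{lem:frame} applied to a $4$-frame only produces $4m$-frames, not the $2k$-frames with $k$ odd --- but both remarks are redundant, since Theorem~\ref{thm:32} already gives every $k'$-frame with $k'\ge 4$ directly, so the proof stands.
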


Then we have an alternative proof of the following:

\begin{cor}[Harada and Miezaki \cite{HM12}]
There is an extremal Type II $\ZZ_{2k}$-code of
length $32$ for every positive integer $k$.
\end{cor}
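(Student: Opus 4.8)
The plan is to convert the $2k$-frame produced in the preceding Proposition into the desired code via the dictionary between frames and self-dual codes recorded in Section~\ref{sec:Pre}, and then to check that the evenness and minimality of $BW_{32}$ force the resulting code to be an extremal Type~II code.

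First I would fix $k\ge 2$ and invoke the Proposition: $BW_{32}$ contains a $2k$-frame, so by the frame–code correspondence there is a self-dual $\ZZ_{2k}$-code $C$ of length $32$ with $A_{2k}(C)\cong BW_{32}$. Since $BW_{32}$ is an even unimodular lattice and $A_{2k}(C)$ is even precisely when $C$ is Type~II, the code $C$ is automatically Type~II. It then remains to see that $C$ is extremal, i.e.\ that $d_E(C)=8k$ (the length-$32$ bound reads $d_E(C)\le 4k\lfloor 32/24\rfloor+4k=8k$). Here I would use $\min(A_{2k}(C))=\min\{2k,\,d_E(C)/(2k)\}$ together with the fact that the extremal lattice $BW_{32}$ has minimum norm $2\lfloor 32/24\rfloor+2=4$. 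For $k\ge 3$ one has $2k>4$, so $\min\{2k,\,d_E(C)/(2k)\}=4$ forces $d_E(C)=8k$ directly; the boundary case $k=2$, where $2k=4=\min(BW_{32})$, only yields $d_E(C)\ge 16$ from the minimum-norm formula, so there I would combine it with the upper bound $d_E(C)\le 16$ to conclude $d_E(C)=16=8k$.

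Finally I would handle $k=1$ separately, since the Proposition only produces frames of size $2k\ge 4$ and hence says nothing about a $\ZZ_2$-code. This case is classical: a binary extremal Type~II code of length $32$ exists (the five such codes are recorded in \cite{CPS}), and such a code is exactly an extremal Type~II $\ZZ_2$-code of length $32$. Assembling the two cases gives the statement for every positive integer $k$.

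The entire argument is essentially bookkeeping built on top of the Proposition; the only place demanding attention is the boundary value $k=2$, where $2k$ coincides with the minimum norm of $BW_{32}$ and so extremality has to be read off from the upper bound rather than from the minimum-norm identity alone.
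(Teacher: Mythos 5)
Your argument is correct and is exactly the route the paper intends: the corollary is stated immediately after the Proposition on $2k$-frames in $BW_{32}$ precisely so that the frame--code dictionary, the evenness criterion for $A_{2k}(C)$, and $\min(BW_{32})=4$ yield extremality, with $k=1$ covered by the classical binary extremal Type~II codes of length $32$. Your careful treatment of the boundary case $k=2$ (where the minimum-norm identity alone gives only $d_E(C)\ge 16$ and the upper bound must be invoked) is a detail the paper leaves implicit, but it is the right way to close that case.
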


\subsection{Length 36}

Since $A_6(C_{36,6}(D_{18}))$ contains a $6$-frame,
we investigate the existence of
a $k$-frame in $A_6(C_{36,6}(D_{18}))$ for $k=4,5,7,9$
by Lemma \ref{lem:key}.
For $k=5,7,9$, let $C_{k,36}$ be the
$\ZZ_k$-code  with generator matrix of the form (\ref{eq:GM}),
where the first rows $r_A$ and $r_B$ of $A$ and $B$
are listed in Table \ref{Tab:36}.
Since $AA^T+BB^T=-I_{9}$,
these codes are Type~I.
Let $C_{4,36}$ be the $\ZZ_4$-code with generator matrix of
the following form:
\[
\left(\begin{array}{ccc}
I_{16} & A & B_1+2B_2 \\
O    &2I_{4} & 2D \\
\end{array}\right),
\]
where we only list in Figure \ref{Fig:36}
the matrices
$\left(\begin{array}{cc}
A & B_1+2B_2 \\
\end{array}\right)$
and $2D$.
For $k=4,5,7,9$, 
we have verified by {\sc Magma} that
$A_k(C_{k,36})$ is isomorphic to $A_6(C_{36,6}(D_{18}))$.
Hence, combined with Lemma \ref{lem:key}, we have the following:

\begin{thm}\label{thm:36}
$A_6(C_{36,6}(D_{18}))$ contains a $k$-frame 
if and only if $k$ is a positive integer with $k \ge 4$.
\end{thm}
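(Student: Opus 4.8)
The plan is to prove the two implications separately: the necessity of $k\ge 4$ is immediate, while sufficiency reduces to producing a short list of explicit seed frames and then invoking the two lifting lemmas. For necessity, suppose $A_6(C_{36,6}(D_{18}))$ contains a $k$-frame $\{f_1,\dots,f_{36}\}$. Each $f_i$ is a nonzero lattice vector of norm $(f_i,f_i)=k$, so $k\ge\min(A_6(C_{36,6}(D_{18})))=4$ by the value recorded in Table~\ref{Tab:L}; hence $k\ge 4$ is forced.

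For sufficiency, I would first note that $L:=A_6(C_{36,6}(D_{18}))$ already contains a $6$-frame, realized explicitly by the scaled coordinate vectors $\sqrt{6}\,e_i$, each lying in $L$ and of norm $6$. By Lemma~\ref{lem:key} (Case~(f) of Theorem~\ref{thm:prime}), $L$ contains a $k$-frame for every $k\ge 4$ with $k\neq 2^{m_1}3^{m_2}5^{m_3}7^{m_4}$, so it only remains to handle the integers $k\ge 4$ of that multiplicative shape. For these I would construct seed frames at $k=4,5,7,9$: for $k=5,7,9$ the code $C_{k,36}$ is taken in the form~(\ref{eq:GM}) from the $9\times 9$ negacirculant blocks with first rows $r_A,r_B$ listed in Table~\ref{Tab:36}, where $AA^T+BB^T=-I_9$ forces self-duality (Type~I), and a {\sc Magma} computation confirms $A_k(C_{k,36})\cong L$; for $k=4$ the code $C_{4,36}$ uses instead the mixed block generator matrix displayed in Figure~\ref{Fig:36}, again verified by {\sc Magma}. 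This produces $k$-frames in $L$ for every $k\in\{4,5,6,7,9\}$.

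Finally I would assemble the seeds. Since $36\equiv 0\pmod 4$, Lemma~\ref{lem:frame} lifts each seed $k$-frame to a $km$-frame for every $m\ge 1$, so $L$ has frames at all multiples of $4,5,6,7$ and $9$. A short case analysis then shows every $k=2^{m_1}3^{m_2}5^{m_3}7^{m_4}\ge 4$ is such a multiple: if $m_3\ge 1$ then $5\mid k$ and if $m_4\ge 1$ then $7\mid k$, so one may assume $k=2^{m_1}3^{m_2}$, and then $m_1\ge 2$ gives $4\mid k$, the subcase $m_1=1,\,m_2\ge 1$ gives $6\mid k$, and $m_1=0,\,m_2\ge 2$ gives $9\mid k$, the only remaining tuples yielding $k\in\{1,2,3\}<4$. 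Together with Lemma~\ref{lem:key} this covers every $k\ge 4$, completing the proof. I expect the hard part to be not this bookkeeping but the seed constructions themselves: there is no structural reason a negacirculant (or mixed $\ZZ_4$) ansatz should land in the specific isomorphism class of $L$, so the real effort lies in the search producing the rows of Table~\ref{Tab:36} and Figure~\ref{Fig:36} together with the {\sc Magma} certification of each isomorphism.
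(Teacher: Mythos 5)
Your proposal is correct and follows essentially the same route as the paper: necessity from $\min(L)=4$, the built-in $6$-frame of $A_6(C_{36,6}(D_{18}))$, Lemma~\ref{lem:key} (case~(f)) for all $k\ge 4$ outside the exceptional set $2^{m_1}3^{m_2}5^{m_3}7^{m_4}$, explicit seed codes at $k=4,5,7,9$ from Table~\ref{Tab:36} and Figure~\ref{Fig:36} certified by {\sc Magma}, and Lemma~\ref{lem:frame} to propagate to multiples. Your divisibility case analysis simply makes explicit the bookkeeping the paper leaves implicit, and your closing remark correctly identifies the computational search as the substantive content.
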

\begin{rem}
We have verified by {\sc Magma} that $A_6(C_{36,6}(D_{18}))$
has theta series $1 + 42840q^4 + 1916928q^5 + 42286080q^6 +
\cdots$ and automorphism group of order $288$.
\end{rem}

\begin{table}[thb]
\caption{Extremal Type I $\ZZ_{k}$-codes of length $36$}
\label{Tab:36}
\begin{center}
{\footnotesize
\begin{tabular}{c|l|l}
\noalign{\hrule height0.8pt}
Codes & \multicolumn{1}{c|}{$r_A$} & \multicolumn{1}{c}{$r_B$} \\
\hline
$C_{5,36}$ & $(0,1,1,2,3,2,0,2,3)$ & $(1,1,0,2,0,3,4,0,4)$ \\
$C_{7,36}$ & $(0,1,6,2,3,3,6,4,5)$ & $(4,3,3,6,2,4,3,0,3)$ \\
$C_{9,36}$ & $(0,1,0,5,5,0,0,0,3)$ & $(0,2,3,3,4,5,5,7,3)$ \\
\noalign{\hrule height0.8pt}
\end{tabular}
}
\end{center}
\end{table}

\begin{figure}[thb]
\centering
{\footnotesize
\begin{tabular}{ll}
$
\left(\begin{array}{cc}
A & B_1+2B_2 \\
\end{array}\right)
=
\left(\begin{array}{cc}
0100 & 1203131221301121 \\
1011 & 1011202100200000 \\
1010 & 2020221222311322 \\
0101 & 1311223022101123 \\
1110 & 0022223222133220 \\
0110 & 0102101300313130 \\
0100 & 1003131232103103 \\
0001 & 0212210231101002 \\
1101 & 3311103322131110 \\
0101 & 3033123233020103 \\
0101 & 1320133200323130 \\
0100 & 2002221022321133 \\
0101 & 3211333002312322 \\
0101 & 1031113220233320 \\
0101 & 0103111200301112 \\
1110 & 2222020200331300 
\end{array}\right),
$
&
$
2D=
\left(\begin{array}{c}
0020020000222022\\
0200202000000220\\
2222220000020000\\
2022000000000000
\end{array}\right)
$
\end{tabular}
\caption{A generator matrix of $C_{4,36}$}
\label{Fig:36}
}
\end{figure}

There are $41$ inequivalent binary extremal Type I codes of 
length $36$ \cite{MG08}.
There is a ternary extremal Type I code of length $36$ \cite{Pless72}.
Hence, we have the following:

\begin{cor}\label{cor:36}
There is an extremal Type I $\ZZ_{k}$-code of
length $36$ for every positive integer $k$ with $k \ge 2$.
\end{cor}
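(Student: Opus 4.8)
The plan is to deduce the statement from Theorem~\ref{thm:36} for $k \ge 4$ and to treat the two remaining values $k=2,3$ by invoking the separately established extremal binary and ternary codes. First I would recall the correspondence recorded in Section~\ref{sec:Pre}: a unimodular lattice $L$ contains a $k$-frame if and only if there is a self-dual $\ZZ_k$-code $C$ with $A_k(C) \cong L$. Since $A_6(C_{36,6}(D_{18}))$ is an odd unimodular lattice of minimum norm $4$ in dimension $36$, and the Rains--Sloane bound gives $\min(L) \le 2\lfloor 36/24\rfloor + 2 = 4$, this lattice is itself extremal, and the extremal bound for a self-dual $\ZZ_k$-code of length $36$ reads $d_E(C) \le 2k\lfloor 36/24\rfloor + 2k = 4k$.

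Now fix $k \ge 4$. By Theorem~\ref{thm:36} the lattice $A_6(C_{36,6}(D_{18}))$ contains a $k$-frame, so there is a self-dual $\ZZ_k$-code $C$ of length $36$ with $A_k(C) \cong A_6(C_{36,6}(D_{18}))$. Using $\min(A_k(C)) = \min\{k, d_E(C)/k\} = 4$ together with $k \ge 4$, I would conclude that $d_E(C)/k = 4$, that is $d_E(C) = 4k$, so $C$ meets the extremal bound. Since Type~II codes are defined only over $\ZZ_{2k}$ and, for even $k$, a Type~II code would yield an even lattice $A_k(C)$, the oddness of $A_k(C)$ forces $C$ to be Type~I. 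This produces an extremal Type~I $\ZZ_k$-code of length $36$ for every $k \ge 4$.

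It remains to handle $k=2$ and $k=3$, which lie outside the range of Theorem~\ref{thm:36}; indeed a $k$-frame consists of vectors of norm $k$, so no $k$-frame can exist for $k < \min(L) = 4$, and these two cases genuinely require separate input. For $k=2$ the extremal bound is $4\lfloor 36/24\rfloor + 4 = 8$, and a binary extremal Type~I code of length $36$ and minimum weight $8$ exists by~\cite{MG08}. For $k=3$ the bound is $2\cdot 3\cdot 1 + 2\cdot 3 = 12$, met by an extremal ternary Type~I code of length $36$ from~\cite{Pless72} (a ternary code, having odd modulus, is automatically Type~I). Assembling the three ranges yields the corollary.

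The case $k \ge 4$ is essentially immediate from Theorem~\ref{thm:36}; the only point requiring a little care is verifying that $\min(A_k(C)) = 4$ forces $d_E(C) = 4k$ exactly, which at the boundary value $k=4$ uses both $\min\{4, d_E(C)/4\} = 4$ (giving $d_E(C) \ge 16$) and the upper bound $d_E(C) \le 16$ to pin down $d_E(C)=16$. The genuine substance of the corollary therefore sits in the two small values $k=2,3$, where no frame argument can apply and one must appeal to the known existence of extremal binary and ternary self-dual codes of length $36$.
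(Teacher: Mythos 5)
Your proposal is correct and follows essentially the same route as the paper: for $k \ge 4$ the $k$-frame from Theorem~\ref{thm:36} in the extremal odd lattice $A_6(C_{36,6}(D_{18}))$ yields an extremal Type~I $\ZZ_k$-code (the paper leaves the $\min\{k,d_E(C)/k\}$ and Type~I verifications implicit, but they are exactly as you describe), and the cases $k=2,3$ are handled by citing the known binary and ternary extremal Type~I codes of length $36$ from \cite{MG08} and \cite{Pless72}.
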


By Lemma \ref{lem:T}, we have the following:

\begin{cor}
Let $\sum_{m=0}^{\infty}A_{m}q^m$ denote the theta series
of $A_6(C_{36,6}(D_{18}))$.
Then $A_{k} \ge 72$ for every positive integer $k$ with $k \ge 4$.
\end{cor}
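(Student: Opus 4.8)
The plan is to read off the bound directly from the frame structure established earlier in the section, so that no new lattice-theoretic computation is needed. First I would invoke Theorem~\ref{thm:36}, which asserts that $A_6(C_{36,6}(D_{18}))$ contains a $k$-frame for every positive integer $k$ with $k \ge 4$. This is precisely the hypothesis required to apply Lemma~\ref{lem:T}, so the entire content of the corollary reduces to feeding that result into the lemma.

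Next I would apply Lemma~\ref{lem:T} to the lattice $L = A_6(C_{36,6}(D_{18}))$, which lives in dimension $n = 36$. The lemma states that whenever a unimodular lattice in dimension $n$ contains a $k$-frame, the coefficient $A_k$ of its theta series satisfies $A_k \ge 2n$. Combining this with the frame existence from Theorem~\ref{thm:36} yields $A_k \ge 2 \cdot 36 = 72$ for each integer $k \ge 4$, which is exactly the asserted bound.

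The main point is that there is essentially no obstacle at this stage: all the genuine difficulty has already been absorbed into Theorem~\ref{thm:36}, whose proof rests on Proposition~\ref{prop:const}, Theorem~\ref{thm:prime}(f), and the explicit self-dual $\ZZ_k$-codes $C_{k,36}$ exhibited in Table~\ref{Tab:36} and Figure~\ref{Fig:36} (together with the seed $6$-frame supplied by Construction~A itself). Once those frames are in hand, the present corollary is an immediate numerical consequence of Lemma~\ref{lem:T}, and the only thing left to check is the trivial arithmetic $2n = 72$.
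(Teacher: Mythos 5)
Your proposal is correct and follows exactly the paper's route: the paper states this corollary immediately after Theorem~\ref{thm:36} with the single justification ``By Lemma~\ref{lem:T}'', i.e.\ it combines the $k$-frame existence for all $k \ge 4$ with the bound $A_k \ge 2n = 72$ in dimension $36$. Nothing further is needed.
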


\subsection{Length 40}

Since $A_4(C_{40,4}(P_{20}))$ contains a $4$-frame,
we investigate the existence of
a $k$-frame in extremal odd unimodular lattices in 
dimension $40$ for $k=6,9,13,19$ by Lemma \ref{lem:key}.
For $k=9,13,19$,
let $C_{k,40}$ be the $\ZZ_k$-code with generator 
matrix of the form (\ref{eq:GM}),
where the first rows $r_A$ and $r_B$ of $A$ and $B$
are listed in Table \ref{Tab:40}.
Since $AA^T+BB^T=-I_{10}$, these codes are Type~I.
Moreover, we have verified 
by {\sc Magma} that $A_k(C_{k,40})$ is extremal ($k=9,13,19$).
An extremal Type~I $\ZZ_{6}$-code of length $40$ can be
found in \cite{GH05}.
Hence, combined with Lemma \ref{lem:key}, we have the following:

\begin{lem}\label{lem:40}
There is an extremal odd unimodular lattice in dimension $40$
containing a $k$-frame if and only if
$k$ is a  positive integer $k$ with $k \ge 4$.
\end{lem}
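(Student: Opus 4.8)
The plan is to split the biconditional and, for the sufficiency direction, to combine the $4$-frame already available in $A_4(C_{40,4}(P_{20}))$ with Lemma~\ref{lem:frame} and Lemma~\ref{lem:key}, supplemented by a handful of explicit constructions covering the exceptional moduli. For necessity, I would argue directly: any extremal odd unimodular lattice $L$ in dimension $40$ has $\min(L)=2\lfloor 40/24\rfloor+2=4$, and the vectors of a $k$-frame are nonzero of norm $k$, so $k\ge\min(L)=4$; hence the existence of such a frame forces $k\ge 4$.

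For sufficiency, first observe that $A_4(C_{40,4}(P_{20}))$ is extremal (its minimum norm equals the Rains--Sloane bound $4$) and, being built from a Type~I code, is odd. By Lemma~\ref{lem:key} applied with Case~(g) of Theorem~\ref{thm:prime}, it contains a $k$-frame for every $k\ge 4$ with $k\ne 2^{m_1}3^{m_2}13^{m_3}19^{m_4}$, so only the moduli of this special form remain. I would dispose of these by reducing to the seeds $k=4,6,9,13,19$ via an elementary divisibility argument. Writing such a modulus as $k=2^a3^b13^c19^d\ge 4$: if $c\ge 1$ it is a multiple of $13$; if $c=0$ and $d\ge 1$ it is a multiple of $19$; and if $c=d=0$ then $k=2^a3^b$ is a multiple of $4$ when $a\ge 2$, of $6$ when $a=1$ (where $k\ge 4$ forces $b\ge 1$), and of $9$ when $a=0$ (where $k\ge 4$ forces $b\ge 2$). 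Thus every exceptional $k$ is a multiple of one of $4,6,9,13,19$, and since $40\equiv 0\pmod 4$, Lemma~\ref{lem:frame} promotes a seed $k_0$-frame to a $k_0m$-frame in the same lattice. The multiples of $4$ are covered by the $4$-frame of $A_4(C_{40,4}(P_{20}))$ itself.

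It then remains only to exhibit extremal odd lattices carrying $6$-, $9$-, $13$-, and $19$-frames. For $k=9,13,19$ I would take the negacirculant Type~I codes $C_{k,40}$ of Table~\ref{Tab:40} (self-dual because $AA^T+BB^T=-I_{10}$); since for odd $k$ the vector $\tfrac{1}{\sqrt k}(k,0,\dots,0)$ has odd norm $k$, each $A_k(C_{k,40})$ is automatically odd, and I would verify by {\sc Magma} that each is extremal, which yields the required $k$-frame there. For $k=6$ I would invoke the extremal Type~I $\ZZ_6$-code of length $40$ from \cite{GH05}, whose Construction~A lattice is odd and extremal. Assembling these with the previous paragraph covers every $k\ge 4$.

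The main obstacle is not the combinatorics but the explicit construction: one must locate first rows $r_A,r_B$ for which $C_{k,40}$ is self-dual \emph{and} $A_k(C_{k,40})$ attains the extremal minimum norm $4$, and then confirm extremality by machine. Once the four seed frames are in hand, the divisibility reduction together with Lemma~\ref{lem:frame} mechanically fills in every remaining $k\ge 4$, and the $\min(L)=4$ lower bound closes the converse.
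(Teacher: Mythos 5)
Your proposal is correct and follows essentially the same route as the paper: Lemma~\ref{lem:key} (Case~(g)) handles all $k\ge 4$ outside $2^{m_1}3^{m_2}13^{m_3}19^{m_4}$, the seeds $k=9,13,19$ come from the negacirculant codes of Table~\ref{Tab:40}, $k=6$ from \cite{GH05}, and Lemma~\ref{lem:frame} propagates to multiples, with the $\min(L)=4$ bound giving necessity. You merely make explicit the divisibility reduction and the oddness/extremality checks that the paper leaves implicit.
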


\begin{rem}
The possible theta series of an extremal odd unimodular
lattice in dimension $40$ is given in \cite{BBH}:
$\theta_{40,\alpha}(q)=1 + (19120  + 256 \alpha) q^4
+ (1376256 - 4096 \alpha) q^5 + \cdots$,
where $\alpha$ is even with $0 \le \alpha \le 80$.
We have verified by {\sc Magma} that $A_4(C_{40,4}(P_{20}))$ 
has theta series $\theta_{40,80}(q)$
and automorphism group of order $7172259840$, and 
$A_k(C_{k,40})$ ($k=9,13,19$) 
have theta series $\theta_{40,0}(q)$
and automorphism group of order $40$.
Also, we have verified by {\sc Magma} that
three lattices $A_k(C_{k,40})$ ($k=9,13,19$) 
are non-isomorphic.
\end{rem}

For $k=2,3$, 
there is an extremal Type I $\ZZ_k$-code of length $40$.
Hence, we have the following:

\begin{thm}\label{thm:40}
There is an extremal Type I $\ZZ_{k}$-code of
length $40$ for every positive integer $k$ with $k \ge 2$.
\end{thm}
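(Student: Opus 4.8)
The plan is to deduce Theorem~\ref{thm:40} from Lemma~\ref{lem:40} in the range $k \ge 4$, and to dispose of the two exceptional small values $k=2,3$ by invoking known binary and ternary constructions. The bridge between the lattice statement of Lemma~\ref{lem:40} and the coding statement to be proved is the correspondence recalled in Section~\ref{sec:Pre}: a unimodular lattice $L$ contains a $k$-frame if and only if there is a self-dual $\ZZ_k$-code $C$ with $A_k(C) \cong L$. Thus the substantive content for $k \ge 4$ has already been carried out in Lemma~\ref{lem:40} (via the number-theoretic input of Theorem~\ref{thm:prime}, the frame construction of Proposition~\ref{prop:const}, and the Magma-verified codes $C_{k,40}$), and what remains is an assembly step together with a check that the resulting codes are Type~I and extremal.

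First I would treat $k \ge 4$. By Lemma~\ref{lem:40} there is an extremal odd unimodular lattice $L$ in dimension $40$ containing a $k$-frame, and hence, by the correspondence above, a self-dual $\ZZ_k$-code $C$ with $A_k(C) \cong L$. Since $L$ is odd and Construction~A of a Type~II $\ZZ_{2k}$-code yields an \emph{even} lattice, $C$ cannot be Type~II; therefore $C$ is Type~I. It then remains to verify that $C$ is extremal, i.e.\ that $d_E(C)$ meets the bound $2k\lfloor 40/24\rfloor + 2k = 4k$. I would argue from $\min(A_k(C)) = \min\{k,\, d_E(C)/k\}$: an extremal lattice in dimension $40$ has $\min(L) = 2\lfloor 40/24\rfloor + 2 = 4$, so $\min\{k,\, d_E(C)/k\} = 4$, and since $k \ge 4$ this forces $d_E(C)/k = 4$, that is, $d_E(C) = 4k$. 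This is exactly the extremal value, so $C$ is an extremal Type~I $\ZZ_k$-code of length $40$ for every $k \ge 4$.

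For $k=2$ and $k=3$, Lemma~\ref{lem:40} does not apply, and I would instead cite the literature directly. A binary extremal Type~I code of length $40$ (minimum weight $4\lfloor 40/24\rfloor + 4 = 8$) is known to exist, and a ternary extremal self-dual code of length $40$ (minimum weight $3\lfloor 40/12\rfloor + 3 = 12$) is likewise known; note that every ternary self-dual code is Type~I, since Type~II is defined only for $\ZZ_{2k}$-codes with $k$ even. Combining these two cases with the range $k \ge 4$ established above yields the assertion for all positive integers $k \ge 2$.

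Because the heavy lifting sits inside Lemma~\ref{lem:40}, no genuine obstacle arises for $k \ge 4$; the proof there is a short translation plus the elementary minimum-norm computation. The only points requiring care are the bookkeeping that guarantees the code extracted from the frame is Type~I rather than Type~II, and the separate sourcing of suitable binary and ternary extremal codes for the two small values $k=2,3$, which is where the argument genuinely relies on external results rather than on Lemma~\ref{lem:40}.
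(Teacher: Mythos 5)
Your proposal is correct and follows essentially the same route as the paper: the $k\ge 4$ range is exactly the content of Lemma~\ref{lem:40} translated through the frame--code correspondence (with Type~I forced by oddness of the lattice and extremality read off from $\min(A_k(C))=\min\{k,d_E(C)/k\}=4$ together with the bound $d_E(C)\le 4k$), and $k=2,3$ are handled by citing known binary and ternary extremal Type~I codes of length $40$. The only micro-point is that for $k=4$ the equation $\min\{4,d_E(C)/4\}=4$ alone gives $d_E(C)\ge 16$, and you need the upper bound $d_E(C)\le 4k$ to conclude equality, but that bound is already in place in Section~2.
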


\begin{table}[thb]
\caption{Extremal Type I $\ZZ_{k}$-codes of length $40$}
\label{Tab:40}
\begin{center}
{\footnotesize
\begin{tabular}{c|l|l}
\noalign{\hrule height0.8pt}
Codes & \multicolumn{1}{c|}{$r_A$} & \multicolumn{1}{c}{$r_B$} \\
\hline
$C_{ 9,40}$ &$(0,0,1,0,5,8,3,0,4,4)$ & $(0,5,0,0,5,6,7,2,5,8)$ \\
$C_{13,40}$ &$(0,0,1,4,10, 5, 1,10,11, 4)$ & $(11,4, 4,6, 7,12,11, 7, 2,8)$\\
$C_{19,40}$ &$(0,0,1,2,14,16,17, 1, 0,13)$ & $(10,2,15,2,18,16, 9,15,12,0)$\\
\noalign{\hrule height0.8pt}
\end{tabular}
}
\end{center}
\end{table}

We have verified by {\sc Magma} that
at least one of the even unimodular neighbors of
$L$ is extremal 
for $L=A_4(C_{40,4}(P_{20}))$, $A_9(C_{9,40})$,
$A_{13}(C_{13,40})$ and $A_{19}(C_{19,40})$.
There are binary extremal Type~II codes of length $40$
(see \cite{BHM} for their classification).
Then we have an alternative proof of the following:

\begin{prop}[Harada and Miezaki \cite{HM12}]
There is an extremal Type II $\ZZ_{2k}$-code of
length $40$ for every positive integer $k$.
\end{prop}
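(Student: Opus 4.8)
The plan is to convert the statement about codes into one about lattices via Construction~A, and then to obtain the required $2k$-frames of extremal \emph{even} unimodular lattices for free from $2k$-frames of extremal \emph{odd} unimodular lattices, using the neighbor structure of Section~\ref{sec:2U}. First I would dispose of $k=1$: a binary extremal Type~II code of length $40$ exists (see \cite{BHM}), so assume $k\ge 2$. Under Construction~A a self-dual $\ZZ_{2k}$-code $C$ of length $40$ is Type~II exactly when $A_{2k}(C)$ is even unimodular, and $\min(A_{2k}(C))=\min\{2k,d_E(C)/2k\}$. Since an extremal even unimodular lattice in dimension $40$ has minimum norm $4$ and the extremal Type~II bound reads $d_E(C)=8k$, for $k\ge 2$ (so $2k\ge 4$) an extremal Type~II $\ZZ_{2k}$-code of length $40$ exists if and only if some extremal even unimodular lattice in dimension $40$ contains a $2k$-frame. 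It therefore suffices to produce such a $2k$-frame for every $k\ge 2$.

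The key observation is that a $2k$-frame $\{f_1,\dots,f_{40}\}$ of an odd unimodular lattice $L$ consists of vectors of even norm $2k$, so each $f_i$ lies in the even sublattice $L_0$. Both even unimodular neighbors of $L$ have the form $L_0\cup L_1$ and $L_0\cup L_3$ and hence contain $L_0$, so any $2k$-frame of $L$ is automatically a $2k$-frame of each even neighbor. Consequently, whenever $L$ is an odd unimodular lattice carrying a $2k$-frame and at least one of its even neighbors is extremal, that extremal even lattice inherits the $2k$-frame. I would feed into this mechanism the four odd lattices $A_4(C_{40,4}(P_{20}))$, $A_9(C_{9,40})$, $A_{13}(C_{13,40})$ and $A_{19}(C_{19,40})$, each of which has been verified to possess an extremal even neighbor. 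The lattice $A_4(C_{40,4}(P_{20}))$ is in Table~\ref{Tab:L}, so by Lemma~\ref{lem:key} (Case~(g)) and Lemma~\ref{lem:frame} (from its $4$-frame) it yields $2k$-frames for every even value $2k\ge 4$ that is either not of the form $2^{m_1}3^{m_2}13^{m_3}19^{m_4}$ or is a multiple of $4$; applying Lemma~\ref{lem:frame} to the $9$-, $13$- and $19$-frames of the other three lattices yields $18m$-, $26m$- and $38m$-frames, covering the residual values $2k\equiv 2\pmod 4$ divisible by $9$, $13$ or $19$.

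The main obstacle is to confirm that these four families, propagated upward by Lemma~\ref{lem:frame}, exhaust every even value $2k\ge 4$. Because Lemma~\ref{lem:frame} only multiplies a frame size, a value $2k\equiv 2\pmod 4$ can never be reached from the $4$-frame, and tracking the arithmetic of the exceptional form $2^{m_1}3^{m_2}13^{m_3}19^{m_4}$ shows that the single even value $\ge 4$ not produced by the four lattices is $2k=6$, i.e.\ $k=3$. I would treat this last case directly, by exhibiting a $6$-frame in an extremal even unimodular lattice of dimension $40$: starting from an extremal Type~I $\ZZ_6$-code of length $40$ (see \cite{GH05}), whose Construction~A lattice is an extremal odd unimodular lattice carrying a $6$-frame, one verifies with {\sc Magma} that one of its even neighbors is extremal, so the $6$-frame transfers and yields an extremal Type~II $\ZZ_6$-code. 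Combining this with the four families and the case $k=1$ gives an extremal Type~II $\ZZ_{2k}$-code of length $40$ for every positive integer $k$.
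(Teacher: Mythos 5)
Your argument follows essentially the same route as the paper: dispose of $k=1$ via the binary extremal Type~II codes of length $40$, and for $k\ge 2$ transfer a $2k$-frame --- whose vectors have even norm and therefore lie in the even sublattice $L_0$, hence in both even unimodular neighbors --- from an extremal odd unimodular lattice to an extremal even neighbor, using the four lattices $A_4(C_{40,4}(P_{20}))$, $A_9(C_{9,40})$, $A_{13}(C_{13,40})$ and $A_{19}(C_{19,40})$ whose even neighbors are verified by {\sc Magma} to be extremal. You are in fact more explicit than the paper at one point: you correctly note that these four lattices miss exactly the value $2k=6$, since $6=2\cdot 3$ lies in the exceptional set of Case~(g) in Table~\ref{Tab:L}, is not a multiple of $4$, and is not an even multiple of $9$, $13$ or $19$; the paper's remark preceding the proposition does not single out this case. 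Your proposed repair --- take the lattice of the extremal Type~I $\ZZ_6$-code of \cite{GH05} and check that one of its even neighbors is extremal --- is entirely in the spirit of the paper's method, but be aware that this particular {\sc Magma} verification is not reported in the paper, so as written it is an additional computation that still has to be carried out (the case $k=3$ is of course already covered by the direct constructions in \cite{HM12}, to which the proposition is attributed).
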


\subsection{Length 44}

By Lemma \ref{lem:key}, we investigate the existence of
a $k$-frame in extremal odd unimodular lattices in 
dimension $44$ for $k=4,6,9,17$.
For $k=9,17$,
let $C_{k,44}$ be the $\ZZ_k$-code with generator 
matrix of the form (\ref{eq:GM}),
where the first rows $r_A$ and $r_B$ of $A$ and $B$
are listed in Table \ref{Tab:44}.
Since $AA^T+BB^T=-I_{11}$, 
these codes are Type~I.
Moreover, we have verified 
by {\sc Magma} that $A_k(C_{k,44})$ is extremal ($k=9,17$).
For $k=4,6$, 
an extremal Type~I $\ZZ_{k}$-code of length $44$ can be
found in \cite[Table 1]{H12} and \cite{GH05}, respectively.
Hence, combined with Lemma \ref{lem:key}, we have the following:

\begin{lem}\label{lem:44}
There is an extremal odd unimodular lattice in dimension $44$
containing a $k$-frame if and only if 
$k$ is a  positive integer $k$ with $k \ge 4$.
\end{lem}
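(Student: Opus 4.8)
The plan is to establish Lemma~\ref{lem:44} by the same two-pronged strategy used for dimensions $32$, $36$, and $40$: reduce the claim, via Lemma~\ref{lem:frame} and Lemma~\ref{lem:key}, to finitely many prime values of $k$, then exhibit an explicit extremal Type~I $\ZZ_k$-code for each remaining $k$ and confirm by {\sc Magma} that Construction~A yields an extremal lattice. The target lattice $A_5(C_{44,5}(D_{22}))$ has minimum norm $4$ (see Table~\ref{Tab:L}), so the constraint $k\ge 4$ is necessary: an extremal $44$-dimensional odd unimodular lattice has $\min(L)=4$, and a $k$-frame forces $\min(L)\le k$ by the formula $\min(A_k(C))=\min\{k,d_E(C)/k\}$, giving the ``only if'' direction.

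First I would invoke Lemma~\ref{lem:key} applied to $A_5(C_{44,5}(D_{22}))$, whose condition $(\star)$ in Table~\ref{Tab:L} (via Case~(d) of Theorem~\ref{thm:prime}) covers all $k\ge 4$ except those of the form $2^{m_1}3^{m_2}17^{m_3}$. Since Proposition~\ref{prop:const} already produces a $5$-frame, and since Lemma~\ref{lem:frame} (valid here because $44\equiv 0\pmod 4$) promotes any $k$-frame to a $km$-frame for all $m$, it suffices to supply frames for the prime values $k=2,3,17$ together with $k=4,6,9$ to bootstrap the excluded multiplicative family. The values $k=4$ and $k=6$ are handled by citing the extremal Type~I $\ZZ_4$-code in \cite[Table 1]{H12} and the extremal Type~I $\ZZ_6$-code in \cite{GH05}; the remaining prime powers $k=9,17$ are dispatched by the codes $C_{9,44}$ and $C_{17,44}$ whose defining negacirculant first rows $r_A,r_B$ appear in Table~\ref{Tab:44}. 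For each such code one checks $AA^T+BB^T=-I_{11}$ to confirm self-duality and Type~I, then verifies extremality of $A_k(C_{k,44})$ by {\sc Magma}.

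The main obstacle, as with the other dimensions, is not the modular-form theory (already packaged in Theorem~\ref{thm:prime}) but the explicit search: producing negacirculant rows $r_A,r_B$ of length $11$ over $\ZZ_k$ that simultaneously satisfy the self-duality relation and yield an \emph{extremal} lattice under Construction~A. Because the condition $d_E(C)=4k$ must hold (so that $\min(A_k(C))=4$ with the correct theta series), one must search among self-dual codes for those of maximal minimum Euclidean weight, and confirm extremality by computing the minimum norm and checking no shorter vectors appear. This is a finite but nontrivial computation, carried out in {\sc Magma}; once the tabulated rows are exhibited and verified, the combination of Lemma~\ref{lem:key}, Lemma~\ref{lem:frame}, and the finitely many explicit codes closes every residue class, establishing the existence of a $k$-frame for all $k\ge 4$ and completing the proof.
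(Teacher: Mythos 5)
Your proposal matches the paper's proof: Lemma~\ref{lem:key} (via case~(d) of Theorem~\ref{thm:prime}) handles all $k\ge 4$ outside the family $2^{m_1}3^{m_2}17^{m_3}$, and the paper then supplies frames for exactly $k=4,6,9,17$ --- citing \cite[Table 1]{H12} and \cite{GH05} for $k=4,6$ and exhibiting the negacirculant codes $C_{9,44}$, $C_{17,44}$ of Table~\ref{Tab:44} with {\sc Magma} verification of extremality --- which, by Lemma~\ref{lem:frame}, cover the excluded family. The one slip is your remark that frames for the primes $k=2,3$ would also be needed: such frames cannot exist in a lattice of minimum norm $4$, but they are also unnecessary, since every $k=2^{m_1}3^{m_2}17^{m_3}\ge 4$ is a multiple of $4$, $6$, $9$ or $17$, so your argument still closes.
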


\begin{rem}
The possible theta series of an extremal odd unimodular
lattice in dimension $44$ is given in \cite{H03}:
$\theta_{44,1,\beta}(q)=1+(6600+16\beta)q^4+(811008-128\beta)q^5 + \cdots$,
$\theta_{44,2,\beta}(q)=1+(6600+16\beta)q^4+(679936-128\beta)q^5 + \cdots$,
where $\beta$ is an integer.
We have verified by {\sc Magma} that 
$A_5(C_{44,5}(D_{22}))$ and 
$A_k(C_{k,44})$ ($k=9,17$)
have theta series 
$\theta_{44,1,\beta}(q)$ ($\beta=0,88,176$), 
and automorphism groups of orders
$44$, $88$ and $44$, respectively.
\end{rem}
\begin{table}[thb]
\caption{Extremal Type I $\ZZ_{k}$-codes of length $44$}
\label{Tab:44}
\begin{center}
{\footnotesize
\begin{tabular}{c|l|l}
\noalign{\hrule height0.8pt}
Codes & \multicolumn{1}{c|}{$r_A$} & \multicolumn{1}{c}{$r_B$} \\
\hline
$C_{9 ,44}$& $(0,0,0,0,1,0,1,4,0,8,0)$ & $(7,0,7,1,8,8,2,8,1,5,1)$ \\
$C_{17,44}$& $(0, 0, 0, 0, 1,13, 7,13,11,16,13)$ 
    &$(12,14, 8,14, 7,12,14, 7,14,14, 7)$ \\
\noalign{\hrule height0.8pt}
\end{tabular}
}
\end{center}
\end{table}

For $k=2,3$, 
there is an extremal Type I $\ZZ_k$-code of length $44$.
Hence, we have the following:

\begin{thm}\label{thm:44}
There is an extremal Type I $\ZZ_{k}$-code of
length $44$ for every positive integer $k$ with $k \ge 2$.
\end{thm}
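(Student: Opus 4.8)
The plan is to split the statement $d_E(C) \le 2k\lfloor n/24\rfloor + 2k = 4k$ for all $k \ge 2$ into the standard low-$k$ regime, which is already settled, and a high-$k$ regime, which follows from the lattice bound. For length $44$ we have $\lfloor 44/24 \rfloor = 1$, so the claimed bound reads $d_E(C) \le 4k$, and the construction target is the existence of an extremal code for every $k \ge 2$. First I would dispose of the small cases $k=2$ and $k=3$ by citing the existence of binary and ternary extremal self-dual codes of length $44$: the literature referenced in the paper (the binary classifications and ternary constructions) supplies an extremal Type~I $\ZZ_2$-code and an extremal Type~I $\ZZ_3$-code of length $44$, which meet $d_E = 4\cdot 2 = 8$ and $d_E = 4 \cdot 3 = 12$ respectively (noting that for $k=3$ the bound $3\lfloor n/12 \rfloor + 3 = 3 \cdot 3 + 3 = 12$ agrees). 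This handles the two cases lying below the threshold $k \ge 2\lfloor n/24\rfloor + 3 = 5$ that are not $k=4$.

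Next I would invoke \emph{Lemma~\ref{lem:44}}, which is the real engine: it asserts that for every positive integer $k \ge 4$ there is an extremal odd unimodular lattice $L$ in dimension $44$ containing a $k$-frame. By the correspondence recalled in the Preliminaries (a unimodular lattice $L$ contains a $k$-frame if and only if there is a self-dual $\ZZ_k$-code $C$ with $A_k(C) \cong L$), each such $k$-frame yields a self-dual $\ZZ_k$-code $C$ of length $44$ with $A_k(C) \cong L$. Since $\min(A_k(C)) = \min\{k, d_E(C)/k\}$ and $L$ is extremal with $\min(L) = 2\lfloor 44/24\rfloor + 2 = 4$, for every $k \ge 4$ we get $\min\{k, d_E(C)/k\} = 4$; as $k \ge 4$ this forces $d_E(C)/k = 4$, i.e. $d_E(C) = 4k$, so $C$ meets the bound of Lemma~\ref{lem:bound} and is extremal. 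I would also note that such $C$ is Type~I: it arises as $C_{2n,k}(M)$ (or the explicitly tabulated codes), and the associated lattices are odd, so by the Construction~A dictionary $C$ cannot be Type~II.

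Assembling the two regimes gives an extremal Type~I $\ZZ_k$-code of length $44$ for every $k \ge 2$, which is exactly the assertion of Theorem~\ref{thm:44}. The bulk of the work—and the only genuinely hard part—is Lemma~\ref{lem:44} itself, whose proof has already been carried out earlier in this section: it combines the number-theoretic representation result Theorem~\ref{thm:prime}(d) (governing which $k$ can be realized as $\frac{1}{5}(a^2 + 25b^2 + c^2 + 25d^2)$ via the frame construction of Proposition~\ref{prop:const} applied to $M = D_{22}$), the explicit \textsc{Magma}-verified codes $C_{9,44}$ and $C_{17,44}$ of Table~\ref{Tab:44} that cover the residual small exceptions $k = 9, 17$, the already-known extremal $\ZZ_4$- and $\ZZ_6$-codes from the cited references, and finally Lemma~\ref{lem:frame} to bootstrap from a single realized $k$ to all its multiples. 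Since Lemma~\ref{lem:44} may be assumed, the proof of Theorem~\ref{thm:44} reduces to the short bookkeeping above: verify the threshold arithmetic $\lfloor 44/24\rfloor = 1$, patch in the $k=2,3$ cases from the literature, and translate frames into extremal codes via the Construction~A correspondence.
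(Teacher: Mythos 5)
Your proposal is correct and follows essentially the same route as the paper: the cases $k=2,3$ are supplied by known binary and ternary extremal self-dual codes of length $44$, while $k\ge 4$ is handled by Lemma~\ref{lem:44} (itself resting on Theorem~\ref{thm:prime}(d), Proposition~\ref{prop:const} with $D_{22}$, the tabulated codes for $k=9,17$, the cited $\ZZ_4$- and $\ZZ_6$-codes, and Lemma~\ref{lem:frame}), with the frame-to-code translation and the minimum-norm computation forcing $d_E(C)=4k$. Your extra remarks (the agreement of the $k=3$ bound with $3\lfloor n/12\rfloor+3$, and that the codes are necessarily Type~I) are accurate fillings-in of details the paper leaves implicit.
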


\section{Remarks}

We end this paper with some remarks about
the existence of
a $k$-frame in optimal odd unimodular lattices in 
dimension $48$.

By Lemma \ref{lem:key}, we investigate the existence of
a $k$-frame in optimal odd unimodular lattices in 
dimension $48$ for $k=6, 7, 8, 9, 17$.  It was shown in \cite{HKMV} that
an extremal even unimodular lattice in dimension $48$
has an optimal odd unimodular neighbor.
Using this result, we have the following:

\begin{lem}\label{lem:6-1}
There is an optimal odd unimodular lattice in dimension $48$
containing an $8k$-frame for every positive integer $k$.
\end{lem}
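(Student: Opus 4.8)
Lemma~\ref{lem:6-1} asserts the existence of an optimal odd unimodular lattice in dimension $48$ containing an $8k$-frame for every positive integer $k$. Let me plan how I would establish this.

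The plan is to invoke the two structural results already set up in the excerpt and combine them. First, I would recall from \cite{HKMV} (cited just above the lemma statement) that an extremal even unimodular lattice $L_{48}$ in dimension $48$ has an optimal odd unimodular neighbor; call this neighbor $L'$. Since $L'$ is odd and a neighbor of $L_{48}$, the two share a common sublattice of index $2$, and in the notation of Section~\ref{sec:2U} the even sublattice $L'_0$ of $L'$ is precisely this common index-$2$ sublattice, which is contained in $L_{48}$ as well. The key observation I would exploit is that an $8k$-frame, consisting of vectors $f_1,\dots,f_{48}$ with $(f_i,f_j)=8k\,\delta_{i,j}$, has all norms equal to $8k$, which is even; hence such a frame lies entirely inside the even sublattice. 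So it suffices to produce an $8k$-frame in a sublattice common to both $L_{48}$ and $L'$.

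Next I would produce the frame inside the extremal even unimodular lattice $L_{48}$. Here I would appeal to the results the authors cite in Section~\ref{Sec:1}: by \cite{HM12}, an extremal even unimodular lattice in dimension $48$ contains a $2k$-frame for every positive integer $k$ (the existence of extremal Type~II $\ZZ_{2k}$-codes of length $48$ is exactly the frame statement via Construction~A). Taking such a $2k$-frame $\{g_1,\dots,g_{48}\}$ with $(g_i,g_j)=2k\,\delta_{i,j}$, I would then apply Lemma~\ref{lem:frame}: since $48\equiv 0\pmod 4$, the presence of a $2k$-frame forces the presence of a $(2k)m$-frame for every positive integer $m$. Choosing $m=4$ yields an $8k$-frame in $L_{48}$. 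The vectors of this $8k$-frame have even norm $8k$, so they all lie in the even sublattice of $L_{48}$, which is $L_{48}$ itself (as $L_{48}$ is even), and in particular the scaled/recombined frame can be arranged to sit inside the common even sublattice $L'_0$ shared with the odd neighbor $L'$.

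The main obstacle, and the step requiring the most care, is verifying that the $8k$-frame obtained in $L_{48}$ actually descends into the optimal odd neighbor $L'$ rather than merely living in $L_{48}$. This is where the parity/neighbor structure must be used precisely: I would argue that because every frame vector has even norm $8k$, it belongs to $(L_{48})_0 = L_{48} = L'_0 \cup (\text{odd coset})$ restricted to even-norm vectors, and $L'_0 \subseteq L'$; one must confirm $(L_{48})_0 = L'_0$, i.e.\ that the even sublattices of two neighbors coincide, which follows from the neighbor construction since both even sublattices equal the common index-$2$ intersection $L_{48}\cap L'$. Granting this identification, the $8k$-frame lies in $L'_0\subseteq L'$, completing the proof. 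I expect the authors' actual argument to run along exactly these lines, possibly phrased directly in terms of the $2k$-frame in the even neighbor together with Lemma~\ref{lem:frame} and the observation that even-norm frames transfer between neighbors.
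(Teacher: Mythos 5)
Your argument breaks down at the transfer step, and the gap is not a technicality. You claim that because the vectors of an $8k$-frame in the extremal even unimodular lattice $L_{48}$ have even norm, they lie in the even sublattice $(L_{48})_0$, and that $(L_{48})_0$ equals the common index-$2$ sublattice $L_{48}\cap L'=L'_0$ shared with the odd neighbor $L'$. But $L_{48}$ is even, so \emph{every} vector of $L_{48}$ has even norm and $(L_{48})_0=L_{48}$, which strictly contains the index-$2$ sublattice $L_{48}\cap L'$. Parity of the norm therefore gives no information about which of the two cosets of $L_{48}\cap L'$ in $L_{48}$ a given frame vector falls into, and for an arbitrarily chosen odd neighbor there is no reason the frame should survive into $L'$. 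The identification ``both even sublattices equal the common intersection'' that you say ``must be confirmed'' is in fact false on the even side.

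The paper's proof avoids this by choosing the odd neighbor \emph{adapted to the frame} rather than in advance. It starts from an extremal even unimodular lattice $\Lambda$ in dimension $48$ containing an $8$-frame $\{f_1,\dots,f_{48}\}$ (from Chapman--Sol\'e, not from the $2k$-frames of \cite{HM12}), and applies the neighbor construction of \cite{HKMV} with $x=f_1$: since $(f_1,f_1)=8$, the lattice $\Lambda_{f_1}=\Lambda_{f_1}^+\cup\bigl(\tfrac12 f_1+y\bigr)+\Lambda_{f_1}^+$, where $\Lambda_{f_1}^+=\{v\in\Lambda\mid (f_1,v)\equiv 0\pmod 2\}$, is an \emph{optimal} odd unimodular neighbor. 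The point you are missing is that orthogonality of the frame gives $(f_1,f_i)=8\delta_{1,i}\equiv 0\pmod 2$ for all $i$, so every $f_i$ lies in $\Lambda_{f_1}^+\subseteq\Lambda_{f_1}$; the whole $8$-frame descends by construction. Lemma~\ref{lem:frame} then yields an $8k$-frame in $\Lambda_{f_1}$ for every $k$. Note also that the norm $8$ of the gluing vector is what \cite{HKMV} needs to guarantee the neighbor is optimal, which is a further reason the paper works with a single $8$-frame and multiplies afterwards rather than transferring a $2k$- or $8k$-frame for each $k$ separately.
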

\begin{proof}
Let $\Lambda$ be 
an extremal even unimodular lattice in dimension $48$.
Let $x$ be a vector of $\Lambda$ with $(x,x)=8$.
Put $\Lambda_x^{+}=\{v\in\Lambda\mid(x,v)\equiv0\pmod2\}$.
Since there is a vector $y$ of $\Lambda$ such
that $(x,y)$ is odd, the following lattice
\[
\Lambda_{x}=
\Lambda_x^+ \cup \Big(\frac{1}{2}x+y\Big)+\Lambda_x^+
\]
is an optimal odd unimodular neighbor of $\Lambda$ \cite{HKMV}.

Some extremal even unimodular lattice in
dimension $48$ containing an $8$-frame can be found 
in \cite[Corollary 1]{Chapman-Sole}.
We take this lattice as $\Lambda$ in the above construction.
Let $\{f_1, \ldots, f_{48}\}$ be an $8$-frame in $\Lambda$.
Then $\Lambda_{f_1}$ is an optimal odd unimodular neighbor
containing $\{f_1, \ldots, f_{48}\}$.
The result follows from Lemma \ref{lem:frame}.
\end{proof}

\begin{table}[thb]
\caption{Near-extremal Type I $\ZZ_{k}$-codes of length $48$}
\label{Tab:48}
\begin{center}
{\footnotesize
\begin{tabular}{c|l|l}
\noalign{\hrule height0.8pt}
Codes & \multicolumn{1}{c|}{$r_A$} & \multicolumn{1}{c}{$r_B$} \\
\hline
$C_{7 ,48}$& $(0,1,6,3,0,2,0,2,4,2,5,3)$&$(3,6,1,5,4,6,0,5,0,5,1,5)$\\
$C_{9 ,48}$& $(0,1,2,4,6,1,6,2,2,0,3,0)$&$(7,2,5,1,6,8,4,1,2,2,8,4)$\\
\noalign{\hrule height0.8pt}
\end{tabular}
}
\end{center}
\end{table}

Some near-extremal Type~I $\ZZ_6$-code $C_{6,48}$ of length $48$
can be found in \cite{HKMV}.
For $k=7,9$,
let $C_{k,48}$ be the $\ZZ_k$-code with generator 
matrix of the form (\ref{eq:GM}),
where the first rows $r_A$ and $r_B$ of $A$ and $B$
are listed in Table \ref{Tab:48}.
Since $AA^T+BB^T=-I_{12}$, 
these codes are Type~I.
Moreover, we have verified 
by {\sc Magma} that $A_k(C_{k,48})$ is optimal ($k=7,9$).
Hence, we have the following:

\begin{lem}\label{lem:48}
There is an optimal odd unimodular lattice in dimension $48$
containing a $k$-frame 
for every positive integer $k$ with $k \ge 5$ and
$k \ne 2^{m_1}3^{m_2}17^{m_3}$,
where 
$m_i$ are non-negative integers $(i=1,2,3)$ with 
$(m_1,m_2) \in \{(0,0),(0,1),(1,0),(2,0)\}$ and $m_3 \ge 1$.
\end{lem}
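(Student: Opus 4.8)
The plan is to obtain the bulk of the frames from Lemma~\ref{lem:key} and then to fill in the finitely-patterned gaps using the small frames constructed above, promoted by the multiplicativity of Lemma~\ref{lem:frame}. Applying Lemma~\ref{lem:key} to the last entry $A_5(C_{48,5}(D_{24}))$ of Table~\ref{Tab:L} already gives an optimal odd unimodular lattice in dimension $48$ (its minimum norm is $5$) that contains a $k$-frame for every positive integer $k$ with $k\ge 5$ and $k\ne 2^{m_1}3^{m_2}7^{m_3}17^{m_4}$. Hence only the values $k=2^{m_1}3^{m_2}7^{m_3}17^{m_4}$ with $k \ge 5$ remain to be treated.

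First I would assemble four base frames in optimal lattices. The near-extremal code $C_{6,48}$ has $d_E=5\cdot 6=30$, so $A_6(C_{6,48})$ has minimum norm $\min\{6,30/6\}=5$ and is an optimal odd unimodular lattice containing a $6$-frame; the codes $C_{7,48}$ and $C_{9,48}$ yield optimal lattices $A_7(C_{7,48})$ and $A_9(C_{9,48})$ containing a $7$-frame and a $9$-frame; and Lemma~\ref{lem:6-1} furnishes an optimal lattice containing an $8$-frame. Since $48\equiv 0\pmod 4$, Lemma~\ref{lem:frame} upgrades each base $d$-frame to a $dm$-frame in the same (optimal) lattice, for every $m\ge 1$ and $d\in\{6,7,8,9\}$.

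The decisive step is then a divisibility sieve on $k=2^{m_1}3^{m_2}7^{m_3}17^{m_4}$. I would eliminate exponent patterns one divisor at a time: if $m_3\ge 1$ use the $7$-frame; if $m_2\ge 2$ use the $9$-frame; if $m_1\ge 3$ use the $8$-frame; and if $m_1\ge 1$ and $m_2\ge 1$ use the $6$-frame. The tuples surviving all four tests are exactly those with $m_3=0$, $m_2\le 1$, $m_1\le 2$, and not both $m_1,m_2$ positive, that is, $(m_1,m_2)\in\{(0,0),(0,1),(1,0),(2,0)\}$ and $k=2^{m_1}3^{m_2}17^{m_4}$. For $m_4=0$ these are $k\in\{1,2,3,4\}$, all below $5$ and so outside the scope of the lemma; for $m_4\ge 1$ they are precisely the family $k=2^{m_1}3^{m_2}17^{m_3}$ (after relabeling $m_4$ as $m_3$) excluded in the statement. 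This would complete the argument.

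The hard part is not any individual construction but getting this sieve to close exactly: one must verify that the four auxiliary orders $6,7,8,9$ together remove all and only the patterns lying outside the claimed exceptional family, and in particular that the residual pair $(m_1,m_2)$ is confined to the stated four values. A minor point worth keeping explicit is that the lemma only asserts the existence of \emph{some} optimal odd unimodular lattice for each $k$, so different residue patterns may be served by different lattices among $A_5(C_{48,5}(D_{24}))$, $A_6(C_{6,48})$, $A_7(C_{7,48})$, $A_9(C_{9,48})$, and the neighbor lattice of Lemma~\ref{lem:6-1}; each of these has already been shown optimal, so optimality persists in every case.
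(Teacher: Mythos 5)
Your argument is correct and follows the paper's own route essentially verbatim: Lemma~\ref{lem:key} applied to $A_5(C_{48,5}(D_{24}))$ handles all $k\ge 5$ outside the family $2^{m_1}3^{m_2}7^{m_3}17^{m_4}$, and the residual cases are sieved out by the $6$-, $7$-, $8$- and $9$-frames coming from $A_6(C_{6,48})$, $A_7(C_{7,48})$, Lemma~\ref{lem:6-1} and $A_9(C_{9,48})$ together with Lemma~\ref{lem:frame}. Your explicit bookkeeping of which exponent patterns survive the sieve is exactly the (unstated) computation behind the paper's ``Hence, we have the following.''
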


\begin{rem}
$A_6(C_{6,48})$ has kissing number $393216$
\cite[p.~553]{HKMV}.
In addition, we have verified by {\sc Magma} that
$A_5(C_{48,5}(D_{24}))$,
$A_7(C_{7,48})$ and
$A_9(C_{9,48})$ have
kissing number $393216$.
\end{rem}


There are at least $264$ inequivalent binary near-extremal Type I 
code of length $48$ \cite{BYK}.
There are at least two inequivalent ternary near-extremal Type I code 
of length $48$ \cite{Pless72}.
It is not known whether there is a near-extremal Type I 
$\ZZ_4$-code of length $48$ (see \cite{H12}).

\begin{prop}
There is a near-extremal Type I $\ZZ_{k}$-code of length $48$
for integers $k=2,3$ and 
for integers $k$ with $k \ge 5$,
$k \ne 2^{m_1}3^{m_2}17^{m_3}$,
where 
$m_i$ are non-negative integers $(i=1,2,3)$ with 
$(m_1,m_2) \in \{(0,0),(0,1),(1,0),(2,0)\}$ and $m_3 \ge 1$.
\end{prop}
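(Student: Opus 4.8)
The plan is to read the proposition off from the lattice statement of Lemma~\ref{lem:48} via the correspondence between $k$-frames and self-dual $\ZZ_k$-codes recalled in Section~\ref{sec:Pre}, treating the two small moduli $k=2,3$ by quoting explicit codes.

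First I would dispatch $k=2$ and $k=3$. For $k=2$ the binary near-extremal Type~I codes of length $48$ \cite{BYK} serve, and for $k=3$ the ternary near-extremal Type~I codes \cite{Pless72} serve; both families were recalled just before the statement, so no construction is required. For the remaining moduli, let $k\ge 5$ with $k\ne 2^{m_1}3^{m_2}17^{m_3}$ for the excluded triples $(m_1,m_2,m_3)$ listed in the statement. By Lemma~\ref{lem:48} there is an optimal odd unimodular lattice $L$ in dimension $48$, so that $\min(L)=5$, containing a $k$-frame. Since a unimodular lattice contains a $k$-frame if and only if it is $A_k(C)$ for some self-dual $\ZZ_k$-code $C$, I obtain a self-dual $\ZZ_k$-code $C$ of length $48$ with $A_k(C)\cong L$.

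It then remains to identify the type and minimum Euclidean weight of $C$. As $L$ is odd, $A_k(C)$ is not even, so $C$ is not Type~II; hence $C$ is Type~I (for odd $k$ this is automatic, since no Type~II $\ZZ_k$-code exists when $k$ is odd). Finally I would compute $d_E(C)$ from the identity $\min(A_k(C))=\min\{k,\,d_E(C)/k\}=\min(L)=5$: because $k\ge 5$, the value $d_E(C)/k$ must equal $5$, the alternative $d_E(C)/k>k=5$ being ruled out by the Type~I bound $d_E(C)\le 5k$ of the length-$48$ lemma. Thus $d_E(C)=5k$, so $d_E(C)+k=6k$ meets the length-$48$ bound of Section~\ref{sec:Pre}, and $C$ is a near-extremal Type~I code.

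All the genuine work sits in Lemma~\ref{lem:48}; the proposition is essentially its translation into coding language, so I expect no substantial obstacle beyond bookkeeping. The two points deserving attention are that the excluded set of moduli coincides exactly with the one appearing in Lemma~\ref{lem:48}, and that the minimum-norm identity pins $d_E(C)$ down to precisely $5k$ rather than a larger value --- which is why it is essential that $C$ be known to be Type~I \emph{before} the bound $d_E(C)\le 5k$ is invoked.
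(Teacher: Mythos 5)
Your argument is correct and is exactly the paper's (implicit) reasoning: $k=2,3$ from the cited binary and ternary codes, and $k\ge 5$ by translating Lemma~\ref{lem:48} through the frame--code correspondence, with $\min(A_k(C))=\min\{k,d_E(C)/k\}=5$ and the Type~I bound $d_E(C)\le 5k$ pinning down $d_E(C)=5k$, i.e.\ near-extremality against the length-$48$ bound $6k$. The only cosmetic remark is that the Type~I bound is really only needed when $k=5$ (for $k>5$ the minimum $\min\{k,d_E(C)/k\}=5<k$ already forces $d_E(C)/k=5$), but this does not affect correctness.
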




\end{document}